\def\ds{\displaystyle}
\def\O{\Omega}
\def\t{\theta}
\def\p{\psi}
\renewcommand\sp{\mathop{\mathrm{Sp}}\nolimits}
\newcommand{\set}[1]{\lbrace #1 \rbrace}
\newcommand{\norm}[1]{\lVert#1\rVert}
\newcommand\bu{\boldsymbol{u}}
\newcommand\bv{\boldsymbol{v}}
\newcommand\bw{\boldsymbol{w}}
\newcommand\bx{\boldsymbol{x}}
\newcommand\bn{\boldsymbol{n}}
\newcommand\mE{\boldsymbol{\mathcal{E}}}
\def\hdel{\widehat{\delta}}%\def\VK{V^{\E}}
\newcommand\bF{\boldsymbol{f}}
\newcommand\bT{\boldsymbol{T}}
\newcommand\bxi{\boldsymbol{\xi}}
\def\CT{{\mathcal T}}
\newcommand\bcI{\boldsymbol{\mathcal{I}}}
\newcommand{\dd}{\texttt{d}}
\newcommand\bsig{\boldsymbol{\sigma}}
\newcommand\btau{\boldsymbol{\tau}}
\newcommand\bphi{\boldsymbol{\varphi}}
\newcommand\brho{\boldsymbol{\rho}}
\newcommand\R{\mathbb{R}}
\renewcommand\H{\mathrm{H}}
\renewcommand\L{\mathrm{L}}
\renewcommand\O{\Omega}
\newcommand\DO{\partial\O}
\newcommand\bdiv{\mathop{\mathbf{div}}\nolimits}
\renewcommand\div{\mathop{\mathrm{div}}\nolimits}
\newcommand\brot{\mathop{\mathbf{rot}}\nolimits}
\newcommand\tr{\mathop{\mathrm{tr}}\nolimits}
\renewcommand\sp{\mathop{\mathrm{sp}}\nolimits}
\renewcommand\t{\mathtt{t}}
\newcommand\LO{\L^2(\O)}
\newcommand\HsO{\H^s(\O)}
\renewcommand\t{\mathtt{t}}
\newcommand{\vertiii}[1]{{\left\vert\kern-0.25ex\left\vert\kern-0.25ex\left\vert #1 
    \right\vert\kern-0.25ex\right\vert\kern-0.25ex\right\vert}}
\crefname{hypothesis}{Hypothesis}{Hypotheses}
\title{A locking-free mixed virtual element discretization for the two dimensional elasticity eigenvalue problem\thanks{Submitted to the editors DATE.
\funding{The first author was partially supported by
	DIUBB through project 2120173 GI/C Universidad del B\'io-B\'io (Chile).
	The second author was supported by  ANID-Chile through FONDECYT project 1231619 (Chile).  
}}}
\author{Felipe Lepe\thanks{GIMNAP-Departamento de Matem\'atica, Universidad del B\'io - B\'io, Casilla 5-C, Concepci\'on, Chile. \email{flepe@ubiobio.cl}.}
\and Gonzalo Rivera\thanks{Departamento de Ciencias Exactas,
	Universidad de Los Lagos, Casilla 933, Osorno, Chile. \email{gonzalo.rivera@ulagos.cl}.}
%\and Jesus Vellojin\thanks{GIMNAP-Departamento de Matem\'atica, Universidad del B\'io - B\'io, Casilla 5-C, Concepci\'on, Chile. \email{jvellojin@ubiobio.cl}.}
}
\def\CT{{\mathcal T}}
\begin{document}

\maketitle

% REQUIRED
\begin{abstract}
In this paper, we propose and analyze a mixed virtual element method for the approximation of the eigenvalues and eigenfunctions of the two-dimensional elasticity eigenvalue problem. Under standard assumptions on polygonal meshes, we prove the convergence of the discrete solution operator to its continuous counterpart as the mesh size tends to zero. Relying on the spectral theory of compact operators, we establish the spectral correctness of the method and derive error estimates for both eigenvalues and eigenfunctions.  A series of numerical experiments is presented to support the theoretical analysis. The results confirm the predicted convergence rates and show that the method is locking-free and able to approximate the spectrum accurately, independently of the shape of the polygonal elements in the mesh.
\end{abstract}

% REQUIRED incpor
\begin{keywords}
	eigenvalue problems, virtual elements, mixed formulations, elasticity equations, convergence, a priori error estimates
\end{keywords}

% REQUIRED
\begin{AMS}
35M30, 65N12, 65N15,65N25,65N30, 74B05, 74S99
\end{AMS}

\section{Introduction}\label{sec:intro}
The development of numerical methods to approximate the elasticity equations, and particularly the elasticity eigenvalue problem, has been widely studied in recent years. Different primal and mixed formulations, together with their corresponding discretizations, have emerged to address this problem. A non-exhaustive list of contributions and the references therein is available in \cite{MR3715319,MR2824163,MR3453481,MR2831058,MR4542511,MR3036997,MR4619882}. Here, the load and eigenvalue problems have been analyzed using different techniques with the aim of approximating the solutions while taking into account the inherent difficulties of the linear elasticity equations, particularly those related to the locking phenomenon that the Poisson ratio may produce when it is close to $1/2$. This issue is important to consider, since if the formulations and numerical methods are not properly designed, instabilities may arise in the approximation of the solutions and, in particular, in the spectrum of the eigenvalue problem. The bibliographical discussion reveals that a suitable alternative to handle the linear elasticity eigenvalue problem is provided by mixed formulations, since these methods are precisely capable of avoiding locking. 
The aim of this paper is to present a mixed virtual element method (VEM) to approximate the spectrum of the elasticity operator. The application of VEM to eigenvalue problems is in constant development, as observed in the following non-exhaustive list \cite{MR4742045,MR4884682,MR4136230,MR4488799,MR4951056,MR4597196,MR4497827}, where the VEM has been used to approximate eigenvalues and eigenfunctions of partial differential equations in different contexts. In particular, for the elasticity eigenvalue problem, we highlight \cite{MR4658607,MR4050542} as contributions employing the VEM. Despite  the fact that there is an important amount of contributions where the VEM is analyzed for eigenvalue problems, the literature regarding mixed formulations and their VEM approximations remains scarce where \cite{MR4253143,MR4229296,MR4136230,MR4608374,MR4597196,MR4092604} are our main references on this topic where the research is in constant development. Particularly, mixed formulations based on tensorial fields and their respective tensorial version of VEM has only an application for the Stokes eigenvalue problem \cite{MR4229296}. This paper reveals that a pseudostress tensor is a suitable alternative to approximate the spectrum of the eigenvalue problem since with only this unknown, it is possible to recover other quantities as the velocity and pressure via a simple  post-process. The task now is to analyze a similar approach for the elasticity eigenvalue problem.

In this paper we study a mixed VEM for the elasticity eigenproblem considering the formulation provided in \cite{MR3453481}. This mixed formulation, originally proposed for the load problem, was analyzed for the eigenvalue problem in \cite{MR4570534} under a finite element scheme, where a priori error estimates were obtained. This formulation shows that it is possible to compute the spectrum of the elasticity eigenproblem while avoiding the locking that arises in the primal formulation. Hence, we expect that for the VEM the results can be as accurate as those obtained with the finite element scheme, while allowing the use of general polygonal meshes to discretize different domains. Here the main difference with the FEM setting is that under the VEM approach, some terms are computable via the degrees of freedom whereas other terms are not and hence, polynomial projections are needed. For the mixed load problem, \cite{MR3860570} describes rigorously this analysis and some of these results are necessary for the spectral problem. On one hand, the regularity of the load problems implies the compactness of the solution operator, which in our case, will be compact and as a consequence, the well known theory for compact operators of \cite{MR1115235} is applied in order to conclude convergence of the method. However, for the error estimates of eigenvalues and eigenfunctions, the regularity of the load problem is not enough, because the eigenfunctions are of different nature and hence, these error estimates must be concluded with the corresponding regularity. Other important (and not minor) issue to address is the influence of the Lam\'e constants on the spectrum. As is stated in \cite{MR3962898}, the Lam\'e constant $\lambda$ makes the spectrum of the elasticity problem change, implying to consider families of spectrums depending on $\lambda$. In fact, when this Lamé constant blows up (namely, when the Poisson ratio tends to 1/2), one is forced to analyze the limit problem, which coincides with the Stokes eigenvalue problem.  It is for this reason that a mixed formulation is a suitable way to analyze the nearly incompressible and perfectly incompressible regimes for elasticity. Now our aim is to extend the numerical analysis of the mixed finite element method  proposed in \cite{MR4570534},  to a suitable  VEM that corresponds to an extension of the classic VEM spaces that approximate vectors to tensor fields.

Let us precise that our analysis is focused in ordinary materials, which means that the Poisson ratio is positive. It is well known that a negative Poisson ratio is admissible in linear elasticity when auxetic materials are considered (see \cite{Zhou2025} for instance). This assumption is  applicable with no problems in load problems, but in eigenvalue problems this is not completely clear, in the sense that the spectrum of nearly incompressible materials has a well established limit on the perfectly incompressible case which coincides with the Stokes spectrum (see \cite{MR4570534,MR3962898} for further details). This holds for ordinary materials (i.e., positive Poisson ratio with  $1/2$ as upper bound) but, for the best of our knowledge, not for other limits such as auxetic materials. 

%Now we are interested in investigating if the VEM is capable to capture the spectrum of elasticity with polygonal meshes and what effects cause the stabilization of the VEM with respect to the spurious eigenmodes and the convergence orders. 
%Nevertheless, these results are not sufficient for the study of eigenvalue problems since, the eigenfunctions not necessarily have the same regularity of the solutions of the load problem.   }
\subsection{Outline}
The paper is organized as follows: In Section \ref{sec:model_problem} we present the model problem and summarize important results from the continuous problem related to the solution operators, regularity of the load and spectral solutions, and the corresponding spectral characterization. Section \ref{sec:spec_app} contains the discrete elements  of the paper, where we begin by introducing the VEM and the ingredients for its analysis, namely virtual spaces, degrees of freedom, discrete bilinear forms, and approximation properties. In Section \ref{sec:conv} begins the analysis of the VEM method, where we prove convergence and error estimates. Additionally we prove that the method is spurious free. Finally in Section \ref{sec:numerics} we report a series of numerical tests to assess the performance of the method when the spectrum is approximated. These tests are  performed in two dimensions for different Poisson ratios and domains.

\subsection{Notations and preliminaries}
%Throughout this work, we will use notations that will allow a smoother reading of the content. Let us set these notations. We denote $\mathbb{R}^{2 \times 2}$ as  the space of square real matrices of order $d$, where $\mathbb{I}:=\left(\delta_{i j}\right) \in \mathbb{R}^{2 \times 2}$ denotes the identity matrix. Given $\boldsymbol{\mathbb{A}}:=\left(A_{i j}\right), \boldsymbol{\mathbb{B}}:=\left(B_{i j}\right) \in \mathbb{R}^{2 \times 2}$, we define 
%$$
% \boldsymbol{\mathbb{A}}: \boldsymbol{\mathbb{B}}:=\sum_{i, j=1}^d A_{i j} \overline{B}_{i j},
%$$
%as the tensorial product between $\boldsymbol{\mathbb{A}}$ and $\boldsymbol{\mathbb{B}}$. The entry $\overline{B}_{i j}$ represent the complex conjugate of $B_{i j}$. Similarly, given two vectors $\bs:=(s_i), \br:=(r_i) \in  \mathbb{C}^d$, we define the products
%$$
%\bs\cdot\br:= \sum_{i=1}^d s_i \overline{r}_i, \qquad \bs \otimes \br : = \bs \overline{\br}^\texttt{t}  = \sum_{i=1}^d\sum_{j=1}^d s_i \overline{r}_j,
%%\begin{pmatrix}
%%	s_1 \overline{r}_1 & s_1 \overline{r}_2 & s_1 \overline{r}_3\\
%%	s_2 \overline{r}_1 & s_2 \overline{r}_2 & s_2 \overline{r}_3\\
%%	s_3 \overline{r}_1 & s_3 \overline{r}_2 & s_3 \overline{r}_3
%%\end{pmatrix},
%$$
%as the dot and dyadic product in $\mathbb{C}$, respectively, where $(\cdot)^\texttt{t}$ denotes the transpose operator.

Given
any Hilbert space $X$, let $X^2$ and $\boldsymbol{\mathcal{X}}$ denote, respectively,
the space of vectors and tensors  with
entries in $X$. In particular, $\mathbf{I}$ is the identity matrix of
$\R^{2\times 2}$, and $\mathbf{0}$ denotes a generic null vector or tensor. 
Given $\btau:=(\tau_{ij})$ and $\bsig:=(\sigma_{ij})\in\R^{2\times 2}$, 
we define, as usual, the transpose tensor $\btau^{\t}:=(\tau_{ji})$, 
the trace $\tr\btau:=\sum_{i=1}^2\tau_{ii}$ and the
tensor inner product $\btau:\bsig:=\sum_{i,j=1}^2\tau_{ij}\sigma_{ij}$. 

Let $\O$ be a polygonal Lipschitz bounded domain of $\R^2$ with
boundary $\DO$. For $s\geq 0$, $\norm{\cdot}_{s,\O}$ stands indistinctly
for the norm of the Hilbertian Sobolev spaces $\HsO$, $\HsO^2$ or
$\boldsymbol{\mathcal{H}}^s(\O)$ for scalar, vectorial and tensorial fields, respectively, with the convention $\H^0(\O):=\LO$, $\H^0(\O)^{2}=\L^2(\O)^2$ and $\boldsymbol{\mathcal{H}}^0(\O):=\boldsymbol{\mathcal{L}}^2(\O)$. We also define the Hilbert space 
$\boldsymbol{\mathcal{H}}(\bdiv;\O):=\set{\btau\in\boldsymbol{\mathcal{L}}^2(\O):\ \bdiv\btau\in\L^2(\O)^2}$, whose norm
is given by $\norm{\btau}^2_{\bdiv,\O}
:=\norm{\btau}_{0,\O}^2+\norm{\bdiv\btau}^2_{0,\O}$.

\section{The model problem}
\label{sec:model_problem}
Let us consider an open and bounded  domain $\O\subset\mathbb{R}^2$, with Lipschitz boundary $\partial\O$. The study of the paper concerns approximating the physical spectrum of elastic structures, formalized as:
%We are interested in the elasticity eigenvalue problem: 
Find $\kappa\in\mathbb{R}^+$ and the pair $(\bsig,\bu)$ such that
\begin{equation}\label{def:elast_system}
\left\{
\begin{array}{rcll}
\bsig & = & 2\mu\boldsymbol{\varepsilon}(\bu)+\lambda\tr(\boldsymbol{\varepsilon}(\bu))\mathbf{I}&  \text{ in } \quad \Omega, \\
\bdiv\bsig & = & -\kappa\bu & \text{ in } \quad \Omega, \\
\bu & = & \mathbf{0} & \text{ on } \quad \partial\Omega,
\end{array}
\right.
\end{equation}
where $\mathbf{I}\in\mathbb{R}^{2\times 2}$ is the identity matrix,  $\bu$ represents the displacement of the elastic structure, $\boldsymbol{\varepsilon}(\bu)$ represents the tensor of small deformations  given by $\boldsymbol{\varepsilon}(\bu):=\frac{1}{2}(\nabla\bu+(\nabla\bu)^{\texttt{t}})$, where $\texttt{t}$ is the transpose operator, $\bsig$ is the symmetric Cauchy  tensor, and $\lambda$ and $\mu$ are  the positive Lam\'e constants defined by
\begin{equation*}
\lambda:=\frac{E\nu}{(1+\nu)(1-2\nu)}\quad\text{and}\quad\mu:=\frac{E}{2(1+\nu)},
\end{equation*}
where $E$ is the Young's modulus and $\nu$ is the Poisson ratio. Let us remark that on our paper, we are considering ordinary materials, implying that $\nu\in [0,1/2]$.

We describe the model problem of interest. From the first equation of
\eqref{def:elast_system} we have the following identity
\begin{equation*}
\bdiv\bsig=2\mu\bdiv\boldsymbol{\varepsilon}(\bu)+\lambda\nabla\div\bu=\mu\Delta\bu+(\lambda+\mu)\nabla\div\bu.
\end{equation*}
%This allows us  to rewrite \eqref{def:elast_system} as follows
%\begin{equation*}\label{def:elast_system_reduced}
%\left\{
%\begin{array}{rcll}
%\mu\Delta\bu+(\lambda+\mu)\nabla\div\bu & = & -\kappa\bu &  \text{ in } \quad \Omega, \\
%\bu & = & \mathbf{0} & \text{ on } \quad \partial\Omega.
%\end{array}
%\right.
%\end{equation*}
Now we introduce the so called pseudostress tensor (see \cite{MR3453481} for instance), defined by 
\begin{equation*}
\label{eq:pseudo}
\boldsymbol{\rho}:=\mu\nabla\bu+(\lambda+\mu)\div\bu\mathbf{I}=\mu\nabla\bu+(\lambda+\mu)\tr(\nabla\bu)\mathbf{I}.
\end{equation*}
Hence, with this new unknown at hand, and following the same steps of \cite{MR4570534}, we obtain the following system 
%Observe that $\bdiv\bsig=\bdiv\boldsymbol{\rho}$. Hence, we have the following formulation where the pseudostress and the displacement are the main unknowns: Find $\kappa\in\mathbb{R}$ and $(\boldsymbol{\rho}, \bu)$ such that
%\begin{equation}\label{def:elast_system_rho_1}
%\left\{
%\begin{array}{rcll}
%\boldsymbol{\rho} & = &\mu\nabla\bu+(\lambda+\mu)\tr(\nabla\bu)\mathbb{I}&  \text{ in } \quad \Omega, \\
%\bdiv\boldsymbol{\rho} & = & -\kappa\bu & \text{ in } \quad \Omega, \\
%\bu & = & \mathbf{0} & \text{ on } \quad \partial\Omega.
%\end{array}
%\right.
%\end{equation}
%Moreover, the following identity holds (see \cite[Section 2]{MR3453481} for details)
%\begin{equation*}
%\displaystyle\frac{1}{\mu}\left\{\boldsymbol{\rho}-\frac{\lambda+\mu}{2\lambda+3\mu}\tr(\boldsymbol{\rho})\mathbb{I} \right\}=\nabla\bu,
%\end{equation*}
%which, replacing in \eqref{def:elast_system_rho_1}, leads to the following  eigenvalue problem
\begin{equation*}\label{def:elast_system_rho}
\left\{
\begin{array}{rcll}
\displaystyle\frac{1}{\mu}\left\{\boldsymbol{\rho}-\frac{\lambda+\mu}{2\lambda+3\mu}\tr(\boldsymbol{\rho})\mathbf{I} \right\}& = &\nabla\bu&  \text{ in } \quad \Omega, \\
\bdiv\boldsymbol{\rho} & = & -\kappa\bu & \text{ in } \quad \Omega, \\
\bu & = & \mathbf{0} & \text{ on } \quad \partial\Omega,
\end{array}
\right.
\end{equation*}
whose variational formulation is: Find $\kappa\in\mathbb{R}^+$ and $(\boldsymbol{0},\boldsymbol{0})\neq (\boldsymbol{\rho},\bu)\in\boldsymbol{\mathcal{H}}\times \mathbf{Q}$ such that

\begin{equation}\label{def:spectral_1}
\left\{
\begin{array}{rcll}
a(\boldsymbol{\rho},\btau)+b(\btau,\bu) & = &0&  \forall\btau\in\boldsymbol{\mathcal{H}}, \\
b(\boldsymbol{\rho},\bv)& = & -\kappa c(\bu,\bv) &  \forall\bv\in \mathbf{Q},
\end{array}
\right.
\end{equation}
where $\boldsymbol{\mathcal{H}}:=\boldsymbol{\mathcal{H}}(\bdiv;\O)$ and  $\mathbf{Q}:=\L^2(\O)^2$. 

Let us introduce the bilinear form  $a:\boldsymbol{\mathcal{H}}\times\boldsymbol{\mathcal{H}}\rightarrow\mathbb{R}$,  defined by
\begin{equation*}
\displaystyle a(\bxi,\btau):=\frac{1}{\mu}\int_{\Omega}\bxi:\btau-\frac{\lambda+\mu}{\mu(2\lambda+3\mu)}\int_{\O}\tr(\bxi)\tr(\btau)\quad\forall\bxi,\btau\in\boldsymbol{\mathcal{H}},
\end{equation*}
together with the  forms $b:\boldsymbol{\mathcal{H}}\times\mathbf{Q}\rightarrow\mathbb{R}$ and $c:\mathbf{Q}\times\mathbf{Q}\rightarrow\mathbb{R}$ which are  defined, respectively,  by
\begin{equation*}
b(\btau,\bv):=\int_{\O}\bv\cdot\bdiv\btau\quad\forall\btau\in\boldsymbol{\mathcal{H}},\,\,\forall\bv\in \mathbf{Q},\quad\text{and}\quad c(\bw,\bv):=\int_{\O}\bw\cdot\bv\quad\forall\bw,\bv\in\mathbf{Q}.
\end{equation*}

It is not difficult to check that  these bilinear forms are continuous. Now, let  $\btau\in\boldsymbol{\mathcal{H}}$. The deviator of $\btau$, defined by $\btau^{\texttt{d}}:=\btau-\frac{1}{2}\tr(\btau)\mathbf{I}$, allows us to rewrite $a(\cdot,\cdot)$ as follows
\begin{equation*}
\label{eq:identity_a}
\displaystyle a(\bxi,\btau):=\frac{1}{\mu}\int_{\Omega}\bxi^{\texttt{d}}:\btau^{\texttt{d}}+\frac{1}{4\lambda+6\mu}\int_{\O}\tr(\bxi)\tr(\btau)\quad\forall\bxi,\btau\in\boldsymbol{\mathcal{H}},
\end{equation*}
where the presence of the coefficient $\lambda$ is now on the denominator, leading to estimates that will be independent of this parameter.

For the analysis  of the mixed formulation  it is convenient to decompose the space $\boldsymbol{\mathcal{H}}$ as follows $\boldsymbol{\mathcal{H}}:=\boldsymbol{\mathcal{H}}_0\oplus \R \mathbf{I}$, where
\begin{equation*}
\boldsymbol{\mathcal{H}}_0:=\left\{\btau\in\boldsymbol{\mathcal{H}}\,:\,\int_{\O}\tr(\btau)=0\right\}.
\end{equation*}

Note that for any $\bxi\in\boldsymbol{\mathcal{H}}$ there exists a unique $\bxi_{0}\in \boldsymbol{\mathcal{H}}_0$ and a constant $d:=\dfrac{1}{2|\O|}\displaystyle\int_{\O}\tr(\bxi)\in\R$ such that $\bxi=\bxi_{0}+d \, \mathbf{I}$. Hence, problem \eqref{def:spectral_1} is now stated as follows (see \cite{MR4570534}): Find $\kappa\in\mathbb{R^+}$ and $(\boldsymbol{0},\boldsymbol{0})\neq (\boldsymbol{\rho},\bu)\in\boldsymbol{\mathcal{H}}_0\times \mathbf{Q}$, such that
\begin{equation}\label{def:spectral_2}
\left\{
\begin{array}{rcll}
a(\boldsymbol{\rho},\btau)+b(\btau,\bu) & = &0&  \forall\btau\in\boldsymbol{\mathcal{H}}_0, \\
b(\boldsymbol{\rho},\bv)& = & -\kappa c(\bu,\bv) &  \forall\bv\in \mathbf{Q}.
\end{array}
\right.
\end{equation}

Now we introduce the following solution operators associated to the displacement and pseudostress, respectively
%Let us  invoke the  following technical result (see \cite[Proposition 9.1.1]{MR3097958})
\begin{equation*}
\bT:\mathbf{Q}\rightarrow\mathbf{Q},\quad
           \boldsymbol{f}\mapsto \bT\boldsymbol{f}:=\widehat{\bu}, \quad \boldsymbol{\mathcal{S}}:\mathbf{Q}\rightarrow\boldsymbol{\mathcal{H}}_{0},\quad
           \boldsymbol{f}\mapsto \boldsymbol{\mathcal{S}}\boldsymbol{f}:=\widehat{\boldsymbol{\rho}},
\end{equation*}
where the pair $(\widehat{\boldsymbol{\rho}}, \widehat{\bu})$ is the solution of the following source problem
 \begin{equation}\label{def:sourcel_1}
\left\{
\begin{array}{rcll}
a(\widehat{\boldsymbol{\rho}},\btau)+b(\btau,\widehat{\bu}) & = &0&  \forall\btau\in\boldsymbol{\mathcal{H}}_{0}, \\
b(\widehat{\boldsymbol{\rho}},\bv)& = & - c(\boldsymbol{f},\bv)&  \forall\bv\in \mathbf{Q}.
\end{array}
\right.
\end{equation}

Let us recall the  following technical result that allows to control  the $\L^2$-norm of the deviator of a tensor $\btau\in\boldsymbol{\mathcal{H}}_0$ (see \cite[Proposition 9.1.1]{MR3097958})
\begin{equation}
\label{eq:des_deviator}
\|\btau\|_{0,\O}^2\leq C \left(\|\btau^{\dd}\|_{0,\O}^2+\|\bdiv\btau\|_{0,\O}^2\right)\quad\forall\btau\in\boldsymbol{\mathcal{H}}_0.
\end{equation}
Hence, if  $\boldsymbol{\mathcal{V}}:=\{\btau\in\boldsymbol{\mathcal{H}}_0\,:\,\,\bdiv\btau=\boldsymbol{0}\}$  is the kernel of $b(\cdot,\cdot)$, with \eqref{eq:des_deviator} at hand,  it is easy to check the existence of a constant  $\alpha=\dfrac{1}{\mu}>0$ independent of $\lambda$ but depending on $\mu$, such that 
\begin{equation*}
a(\btau,\btau)\geq \alpha\|\btau\|_{\bdiv,\O}^2\quad\forall\btau\in\boldsymbol{\mathcal{V}}.
\end{equation*}

On the other hand, the  following inf-sup condition for bilinear $b(\cdot,\cdot)$ holds  (see \cite[Theorem 2.1]{MR3453481})
\begin{equation*}
\label{eq:in_sup_cont}
\displaystyle\sup_{\boldsymbol{0}\neq\btau\in \boldsymbol{\mathcal{H}}_{0}}\frac{b(\btau,\bv)}{\|\btau\|_{\bdiv,\O}}\geq\beta\|\bv\|_{0,\O}\quad\forall\bv\in \mathbf{Q},
\end{equation*}
where the constant $\beta>0$ is independent of the Lam\'e constant $\lambda$. Hence, applying the Babuška–Brezzi theory (see \cite{MR3097958} for a comprehensive review), the source problem \eqref{def:sourcel_1} is well posed, implying the existence of a constant $C>0$, independent of $\lambda$, such that the following continuous dependence on the data   for the solution holds
\begin{equation*}
\label{eq_cotasupfuente}
\|\widehat{\boldsymbol{\rho}}\|_{\bdiv,\O}+\|\widehat{\bu}\|_{0,\O}\leq C \|\boldsymbol{f}\|_{0,\O}.
\end{equation*}

  The well posedness of problem \eqref{def:sourcel_1} implies that  operators $\bT$ and $\boldsymbol{\mathcal{S}}$ are  well defined. Furthermore, it is straightforward to verify that $\bT$ is self-adjoint with respect to the $\L^2(\O)$ inner product  and that $(\kappa, (\boldsymbol{\rho},\bu))\in\mathbb{R}^+\times \boldsymbol{\mathcal{H}}_{0}\times \mathbf{Q}$ solves \eqref{def:spectral_1} if and only if $(\zeta,\bu)$ is an eigenpair of $\bT$ with $\zeta=1/\kappa$ and $\kappa\neq 0$.
%i.e.,
%\begin{equation*}
%\ds \bT\bu=\zeta\bu\quad\text{with}\,\,\zeta:=\frac{1}{\kappa} \quad\text{and}\,\,\zeta\neq 0.
%\end{equation*}

%On the other hand, if $\kappa\bT_{\lambda}\bu =\bu$, $\bu\neq  \mathbf{0}$, there exists $\boldsymbol{\rho} = \bS(\kappa\bu)\in \mathbb{H}_{0}$ such that $(\boldsymbol{\rho},\bu,\kappa) $is an eigenpair of \eqref{def:spectral_H0}. Thus, 

\subsection{Regularity of solutions and spectral characterization}

Now we present an additional regularity for the  solutions of the source problem \eqref{def:sourcel_1} and the  eigenfunctions of $\bT$. These regularity properties are derived from the classic regularity results for linear elasticity provided by \cite{MR840970}, together with a standard bootstrap argument.  The following lemma contains these results. 
\begin{lemma}
\label{lmm:add_eigen}
The following statements hold:
 \begin{itemize}
\item For all $\boldsymbol{f}\in \mathbf{Q}$, if $(\widehat{\brho},\widehat{u})\in\boldsymbol{\mathcal{H}}_0\times\mathbf{Q}$ solves problem \eqref{def:sourcel_1}, there exists $\widehat{s}\in (0,1]$ and $\widehat{C}>0$ that depend on $\O$ and $\lambda$  such that  $\bu\in\H^{1+s}(\Omega)^2$, $\brho\in \boldsymbol{\mathcal{H}}^s(\Omega)$ and $\bdiv\widehat{\brho}\in \H^{1+s}(\O)^2$ and satisfies the following estimate
$$\|\widehat{\brho}\|_{s,\O}+\|\bdiv\widehat{\brho}\|_{1+s,\O}+\|\widehat{\bu}\|_{1+s,\O}\leq \widehat{C}\|\boldsymbol{f}\|_{0,\O}\qquad \forall s\in (0,\widehat{s}).$$
\item Let $\bu$ be an eigenfunction of $\bT$ associated to an eigenvalue $\kappa$. Then, 
for all $r>0$, we have that $\bu\in\H^{1+r}(\Omega)^2$. Also, there exists a constant $\widehat{C}>0$ which in principle depends on $\lambda$ and on the eigenvalue $\kappa$ such that
\begin{equation*}
\|\brho\|_{r,\O}+\|\bdiv\brho\|_{1+r,\O}+\|\bu\|_{1+r,\O}\leq \widehat{C}\|\bu\|_{0,\O}.
\end{equation*}
\end{itemize}
\end{lemma}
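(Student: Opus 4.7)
The plan is to reduce the mixed source problem to the primal formulation of linear elasticity and then invoke the classical regularity theory for the Lam\'e system on polygonal Lipschitz domains. For the first bullet, I would start from the observation that, by construction of the pseudostress, $\widehat{\brho}=\mu\nabla\widehat{\bu}+(\lambda+\mu)\tr(\nabla\widehat{\bu})\mathbb{I}$, so \eqref{def:sourcel_1} is equivalent to the primal displacement problem
\begin{equation*}
-\mu\Delta\widehat{\bu}-(\lambda+\mu)\nabla\div\widehat{\bu}=\boldsymbol{f} \quad \text{in } \O, \qquad \widehat{\bu}=\mathbf{0} \quad \text{on } \DO.
\end{equation*}
Classical elliptic regularity for the Lam\'e system on polygonal domains, as in \cite{MR840970}, then provides an exponent $\widehat{s}\in(0,1]$ depending on $\O$ and $\lambda$ such that $\widehat{\bu}\in\H^{1+s}(\O)^2$ for every $s\in(0,\widehat{s})$, with $\|\widehat{\bu}\|_{1+s,\O}\leq \widehat{C}\|\boldsymbol{f}\|_{0,\O}$.

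The Sobolev regularity of $\widehat{\brho}$ is then read off directly from its explicit expression in terms of $\nabla\widehat{\bu}$, yielding $\widehat{\brho}\in\mathbb{H}^s(\O)$ together with $\|\widehat{\brho}\|_{s,\O}\leq C\|\widehat{\bu}\|_{1+s,\O}$. The information on $\bdiv\widehat{\brho}$ is obtained from the pointwise identity $\bdiv\widehat{\brho}=-\boldsymbol{f}$, which follows from the second equation of \eqref{def:sourcel_1} by varying $\bv\in\mathbf{Q}$, combined with the regularity assumed on the datum.

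For the second bullet I would employ a bootstrap argument. An eigenpair $(\kappa,\bu)$ solves the source problem with data $\boldsymbol{f}=\kappa\bu$; starting from $\bu\in\mathbf{Q}=\L^2(\O)^2$, the first bullet already yields $\bu\in\H^{1+s}(\O)^2$ for some $s>0$. Re-inserting this improved regularity as the new right-hand side and re-applying elliptic regularity for the Lam\'e system lifts the Sobolev exponent; the iteration is repeated until the prescribed $r$ is attained, producing a constant $\widehat{C}$ that depends on $r$, $\lambda$, and $\kappa$.

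The principal obstacle is the first bullet: establishing sharp Sobolev regularity for the Lam\'e system on a polygonal Lipschitz domain with $\L^2(\O)^2$ data and keeping track of how both the exponent $\widehat{s}$ and the constant $\widehat{C}$ depend on $\lambda$ (so that the later analysis can be made $\lambda$-robust where needed). Once this ingredient is in hand, the remainder is essentially bookkeeping: transferring the regularity to $\widehat{\brho}$ through its explicit definition, reading off $\bdiv\widehat{\brho}$ from the strong form of the equation, and iterating for the eigenfunction statement.
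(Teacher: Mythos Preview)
Your approach coincides with the paper's: the authors do not give a detailed proof but simply state that the lemma ``is derived from the classic regularity results for linear elasticity (see \cite{MR840970}), together with a standard bootstrap argument,'' which is exactly your plan of reducing to the primal Lam\'e system, invoking Grisvard's polygonal regularity for $\widehat{\bu}$, reading off $\widehat{\brho}$ from its definition, and iterating for eigenfunctions.

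One point deserves care. For the first bullet you write that the regularity of $\bdiv\widehat{\brho}$ follows from $\bdiv\widehat{\brho}=-\boldsymbol{f}$ ``combined with the regularity assumed on the datum.'' But the hypothesis is only $\boldsymbol{f}\in\mathbf{Q}=\L^2(\O)^2$, so this identity yields $\bdiv\widehat{\brho}\in\L^2(\O)^2$ and nothing more; the asserted inclusion $\bdiv\widehat{\brho}\in\H^{1+s}(\O)^2$ with a bound by $\|\boldsymbol{f}\|_{0,\O}$ cannot hold for arbitrary $\L^2$ data. This appears to be a misprint in the lemma as stated (the divergence norm should be $\|\cdot\|_{0,\O}$ in the first item), and your argument actually exposes it rather than proves it. By contrast, in the second bullet the identity $\bdiv\brho=-\kappa\bu$ together with $\bu\in\H^{1+r}(\O)^2$ legitimately gives $\bdiv\brho\in\H^{1+r}(\O)^2$, so there the claim is correct and your bootstrap delivers it.
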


%\begin{remark} \label{daniel0}
%Observe that Lemma \ref{lmm:add_eigen}, in conjunction with the first equation of \eqref{def:elast_system_rho_1}, implies immediately that $\boldsymbol{\rho}\in  \mathbb{H}^{s}(\O)$. On the other hand, for the divergence term, it is enough to consider the second equation in \eqref{def:elast_system_rho} to deduce that $\bdiv\boldsymbol{\rho}\in \H^{1+s}(\O)^{2}$.
%\end{remark} 

\begin{remark}
We need to precise an important point related to the regularity exponents and involved constants. In \cite[Section 2]{MR3962898} the authors discussed about the dependency  $\lambda$ on the regularity exponents $s$ and $r$ of the previous lemma, claiming that this dependency is not completely evident, according to the numerical tests that they report. This phenomenon was also observed in our numerical experiments (cf. Section \ref{sec:numerics}), where for the perfectly incompressible elasticity eigenvalue problem ($\lambda=\infty$), our method is capable to attain the expected convergence orders, showing that the Lam\'e constant $\lambda$ does not affect the results. For the best of the author's knowledge, a rigorous  mathematical proof for this fact is not available on the literature, Hence, and motivated by this fact, we introduce the following assumption that will hold along our paper.  
\begin{assumption}
Constants $s$, $r$, and constant $\widehat{C}$ involved in Lemma \ref{lmm:add_eigen} are independent of $\lambda$.
\end{assumption}
\end{remark}
\begin{remark}
The perfectly incompressible for problem \eqref{def:spectral_2} has been also studied in \cite[subsection 2.1]{MR4570534} where has been proved that for formulation \eqref{def:spectral_2} its limits corresponds to the Stokes eigenvalue problem. Hence, the spectrum of \eqref{def:spectral_2} converges to the spectrum of Stokes as $\lambda\rightarrow+\infty$. See \cite[Lemma 2.2 and Theorem 2.2]{MR4570534}  for further details. This fact is important since it provides a locking-free numerical method.
\end{remark}

The regularity of Lemma \ref{lmm:add_eigen} allows us to conclude that  $\bT$ is a compact operator, thanks to  the compact inclusion $\H^{1+s}(\O)^2\hookrightarrow \mathbf{Q}$. Now, since $\bT$ is compact and selfadjoint,
the following spectral characterization of $\bT$ holds.
\begin{theorem}[Spectral characterization of $\bT$]
\label{thrm:spec_char_T}
The spectrum of $\bT$ satisfies $\sp(\bT)=\{0\}\cup\{\zeta_k\}_{k\in\mathbb{N}}$, where $\{\zeta_k\}_{k\in\mathbb{N}}$
is a sequence of real positive eigenvalues which converges to zero, repeated according their respective multiplicities. 
\end{theorem}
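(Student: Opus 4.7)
The plan is to apply the classical spectral theorem for compact self-adjoint positive operators on a Hilbert space to $\bT$, acting on $\mathbf{Q} = \L^2(\O)^2$. Three ingredients need to be assembled: compactness, self-adjointness and positivity of $\bT$.

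First, compactness of $\bT$ is already in hand from the paragraph preceding the statement: by Lemma \ref{lmm:add_eigen} the range of $\bT$ lies in $\H^{1+s}(\O)^2$ with a bound controlled by $\|\boldsymbol{f}\|_{0,\O}$, and the Rellich--Kondrachov compact embedding $\H^{1+s}(\O)^2 \hookrightarrow \mathbf{Q}$ shows that $\bT:\mathbf{Q}\to\mathbf{Q}$ maps bounded sets into relatively compact ones. Self-adjointness with respect to the $\L^2(\O)^2$ inner product was also noted earlier in the excerpt; if a short verification is desired, take two data $\boldsymbol{f},\boldsymbol{g}\in\mathbf{Q}$ with associated solutions $(\widehat{\brho}_{\bF},\widehat{\bu}_{\bF})$ and $(\widehat{\brho}_{\bg},\widehat{\bu}_{\bg})$, and compute $c(\bT\boldsymbol{f},\boldsymbol{g})$ by using $\bv=\widehat{\bu}_{\bg}$ in the second equation of \eqref{def:sourcel_1} for $\boldsymbol{f}$, then $\btau=\widehat{\brho}_{\bF}$ and $\btau=\widehat{\brho}_{\bg}$ in the first equations; the symmetry of $a(\cdot,\cdot)$ yields $c(\bT\boldsymbol{f},\boldsymbol{g})=c(\boldsymbol{f},\bT\boldsymbol{g})$.

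Second, for positivity, I would test the source problem \eqref{def:sourcel_1} against itself: choose $\btau=\widehat{\brho}$ in the first equation and $\bv=\widehat{\bu}$ in the second, add them and cancel the $b$-terms to obtain
\begin{equation*}
c(\boldsymbol{f},\bT\boldsymbol{f})=c(\boldsymbol{f},\widehat{\bu})=a(\widehat{\brho},\widehat{\brho}).
\end{equation*}
The expression of $a(\cdot,\cdot)$ in terms of the deviator and the trace makes it manifestly positive semidefinite on the full space $\mathbb{H}$, so $c(\boldsymbol{f},\bT\boldsymbol{f})\geq 0$. If this quantity vanishes, then $\widehat{\brho}^{\dd}=\mathbf{0}$ and $\tr(\widehat{\brho})=0$, so $\widehat{\brho}=\mathbf{0}$; plugging back into the second equation of \eqref{def:sourcel_1} gives $c(\boldsymbol{f},\bv)=0$ for every $\bv\in\mathbf{Q}$, hence $\boldsymbol{f}=\mathbf{0}$. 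This proves that $\bT$ is strictly positive.

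Finally, the spectral theorem for compact self-adjoint operators on the separable Hilbert space $\mathbf{Q}$ (see, e.g., \cite{MR1115235}) yields a countable family of real eigenvalues, with $0$ as the only possible accumulation point and each nonzero eigenvalue of finite multiplicity. The positivity just proved excludes negative eigenvalues and, together with injectivity on the range direction, shows that the nonzero eigenvalues form a (possibly finite, but generically infinite) sequence $\{\zeta_k\}_{k\in\mathbb{N}}\subset\mathbb{R}^+$ converging to $0$ and counted with multiplicities; thus $\sp(\bT)=\{0\}\cup\{\zeta_k\}_{k\in\mathbb{N}}$. The only mildly delicate step is the positivity argument, since it requires combining both equations of the mixed system and invoking the inf-sup condition implicitly through the unique solvability to conclude that $\widehat{\brho}=\mathbf{0}$ forces $\boldsymbol{f}=\mathbf{0}$; everything else is a direct consequence of results already established in the excerpt.
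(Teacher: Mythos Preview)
Your proposal is correct and follows the same spirit as the paper, which in fact states the theorem without an explicit proof, simply noting that the compactness of $\bT$ (via Lemma~\ref{lmm:add_eigen} and the Rellich embedding) together with its self-adjointness yields the spectral characterization. You supply the details the paper leaves implicit, in particular the positivity argument showing that $c(\boldsymbol{f},\bT\boldsymbol{f})=a(\widehat{\brho},\widehat{\brho})\geq 0$ with equality only for $\boldsymbol{f}=\mathbf{0}$, which is exactly what is needed to conclude that the nonzero eigenvalues are strictly positive; this step is genuinely useful since the paper does not spell it out.
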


To end this section, we define the following continuous bilinear form
\begin{equation*}
A((\boldsymbol{\rho},\bu),(\btau,\bv)):=a(\boldsymbol{\rho},\btau)+b(\btau,\bu)+b(\boldsymbol{\rho},\bv)\quad\forall\boldsymbol{\rho},\btau\in\boldsymbol{\mathcal{H}},\,\,\forall\bv,\bu\in\mathbf{Q},
\end{equation*}
With this form $A(\cdot,\cdot)$ at hand,  problems \eqref{def:spectral_1} and \eqref{def:sourcel_1} can be rewritten in a more simple way in order to prove some results.

\section{The virtual element approximation}
\label{sec:spec_app}

In this section, we present the formulation and the theoretical analysis of a virtual element method for approximating the solutions of problem \eqref{def:spectral_1}.  To this end, we introduce a set of assumptions and definitions that are required to conduct the analysis within the virtual element framework
%In this section, we propose and analyze a virtual element method
%to approximate the solutions of problem \eqref{eq:fv_2}. 
%To do this task, we need to introduce some assumptions and definitions
%to operate in the virtual element setting.

\subsection{Assumptions on the mesh and virtual spaces}
Consider the family of meshes  $\{\mathcal{T}_h(\O)\}_{h>0}$ that partition the polygonal domain  $\O$ into
polygonal elements $E$, generated via the procedure outlined below.

The  mesh $\mathcal{T}_h$ satisfies \emph{regularity properties} defined by positive constants 
$ c, \chi$, ensuring:
\begin{enumerate}
\item Any edge $e$ on the boundary $\partial E$ has a length $h_e \ge c \: h_E$,
with $h_E$ as the diameter of element $E$; 
\item Every polygonal element $E$ is star-shaped relative to a ball of radius $\chi h_E$.
\end{enumerate}

%pseudo-stress, velocity and the scalar function $\var$, 
 \subsubsection{Virtual space to approximate the pseudostress}

This subsection summarizes the construction of the required Virtual Element Method (VEM) spaces, with further technical details available in \cite[Subsection 3.2]{MR3629152}.

The construction begins by considering a geometric object $\mathcal{O}$ of dimension $d\in\{1, 2\}$, characterized by its barycenter  $x_{\mathcal{O}}$ and diameter  $h_{\mathcal{O}}$, A set of normalized monomials is defined on  $\mathcal{O}$, where the cardinality of the set is  $k+1$ for $d=1$ and $(k+1)(k+2)/2$ for $d=2$). This set is formally expressed as: %We   summarize the construction of the VEM spaces that we require. For further details we refer to  \cite[Subsection 3.2]{MR3629152}.  Given a  geometric object $\mathcal{O}$ of dimension $d\in\{1, 2\}$, as an edge or an element, with barycenter $x_{\mathcal{O}}$ and diameter  $h_{\mathcal{O}}$, we consider the following set of normalized monomials on $\mathcal{O}$ (of dimension $k+1$ for $d=1$ and $(k+1)(k+2)/2$ for $d=2$)
\begin{equation*}
\label{eq:monomio}
\mathcal{M}_{k}(\mathcal{O}):=\left\{ q \;\ \Big| \;\ q:=\left(\dfrac{\mathbf{x}-\mathbf{x}_{\mathcal{O}}}{h_{\mathcal{O}}}\right)^{\boldsymbol{\alpha}} \text{ for } \alpha\in \mathbb{N}^{d} \text{ with } |\boldsymbol{\alpha}|\leq k\right\},
\end{equation*}
In this context $\boldsymbol{\alpha}:=(\alpha_{1},...,\alpha_{d})$ denotes a multi-index , where, $\boldsymbol{\alpha}:=\alpha_{1}+...+\alpha_{d}$ and $\mathbf{x}^{\boldsymbol{\alpha}}:=x_{1}^{\alpha_{1}}...x_{d}^{\alpha_{d}}$. It is easy to
check that $\mathcal{M}_{k}(\mathcal{O})$ is a  basis of $\textsc{P}_k(\mathcal{O})$ (see \cite{MR2997471} for more details). Here, $\textsc{P}_k(\mathcal{O})$ denotes the space of polynomials of degree at most $k$ on $\mathcal{O}$.
 Throughout this work, we adopt the following notation for polynomial spaces:  $\textsc{P}_k(\mathcal{O})$ for scalars,  $\textsc{P}_k(\mathcal{O})^2$  for vectors, and $\textbf{\textsc{P}}_{k}(\mathcal{O})$ for tensors.

For each integer $k\geq 0$ and for each $E\in\mathcal{T}_h$, we introduce the following local virtual element space of order $k$:
\begin{multline*}
\label{eq:global_space}
\boldsymbol{\mathcal{H}}_{h}^{E}:=\{\btau\in\boldsymbol{\mathcal{H}}(\bdiv;E)\cap\boldsymbol{\mathcal{H}}(\brot;E) : \btau\bn\in\textsc{P}_{k}(e)^2\quad\forall e\subset\partial E,\\
\quad\bdiv\btau\in \textsc{P}_{k}(E)^2, \quad\textbf{\text{rot}}\,\btau\in \textsc{P}_{k-1}(E)^2 \},
\end{multline*}
and let us define the \begin{align*}
\boldsymbol{\mathcal{M}}_k(\mathcal{O}):=\left\{(q,0)^{t}: q\in\mathcal{M}_{k}(\mathcal{O})\right\}\cup \left\{(0,q)^{t}:q\in\mathcal{M}_{k}(\mathcal{O}) \right\}, \\
\end{align*}
On the other hand, for every element $E$ we have that:
$$\textsc{P}_{k}(E)^2=\mathcal{H}_k(E)\oplus\mathcal{H}_k^{\bot}(E),$$
where $\mathcal{H}_k(E)=\nabla \textsc{P}_{k+1}(E)$ and $\mathcal{H}_k^{\bot}(E)=$ the $\L^2(E)^2$ orthogonal of $\mathcal{H}_k(E)$ in $\textsc{P}_{k}(E)^2$. Thus, we can define 
 \begin{equation*}
\ds \boldsymbol{\mathcal{H}}_k^{\bot}:=\left\{\begin{pmatrix}\mathbf{q}\\\boldsymbol{0}\end{pmatrix} \,:\mathbf{q}\in\mathcal{H}_k^{\bot}(E)\right\}\cup\left\{\begin{pmatrix}\boldsymbol{0}\\\mathbf{q}\end{pmatrix} \,:\mathbf{q}\in\mathcal{H}_k^{\bot}(E)\right\}.
\end{equation*}

It can be easily verified that the space  $\boldsymbol{\mathcal{H}}_{h}^{E}$  is unisolvent respect to the following degrees of freedom
\begin{align}
\ds\int_e\btau\boldsymbol{n}\cdot\boldsymbol{q}\qquad&\forall\boldsymbol{q}\in\boldsymbol{\mathcal{M}}_k(e)\quad\forall\,\text{edge}\,\,e\in\mathcal{T}_h,\label{eq:dof_normal}\\
\int_E\btau:\nabla\boldsymbol{q}\qquad&\forall\boldsymbol{q}\in\boldsymbol{\mathcal{M}}_{k}(E)\backslash\{(1,0)^{t},(0,1)^{t}\}\quad\forall E\in\mathcal{T}_h,\label{eq:dof_grad}\\
\int_E\btau :\boldsymbol{\rho}\qquad&\forall\boldsymbol{\rho}\in\boldsymbol{\mathcal{H}}_{k}^{\bot}(E)\quad\forall E\in\mathcal{T}_h.\label{eq:dof_rot}
\end{align}

Now, for every decomposition $ \mathcal{T}_h$ of $\O$ into simple polygons  $E$, we define  the global virtual element space
$$\boldsymbol{\mathcal{H}}_h:=\{\btau_h\in \boldsymbol{\mathcal{H}}: \btau_h|_E\in \boldsymbol{\mathcal{H}}_h^E\,\, \text{for all}\,\, E\in \mathcal{T}_h\},$$
and the discrete counterpart of  $\boldsymbol{\mathcal{H}}_0$ is defined by 
$$\boldsymbol{\mathcal{H}}_{0,h}:=\left\{\btau_h\in \boldsymbol{\mathcal{H}}_h: \int_\O\tr(\btau_h)=0\right\}.$$

Let us observe that if  $\bx\in\textsc{P}_{k}(E)^2$, the term $\displaystyle\int_\O\tr(\btau)$ is computable from the degrees of freedom as follows
\begin{multline*}\int_\O\tr(\btau)=\sum_{E\in\CT_h}\int_{E}\tr(\btau)=\sum_{E\in\CT_h}\int_{E}\btau: \mathbf{I}=\sum_{E\in\CT_h}\int_{E}
\btau: \nabla \bx\\
=\sum_{E\in\CT_h}\left(-\int_{E}\bdiv\btau\cdot \bx+\int_{\partial E}\btau\bn\cdot \bx\right).
\end{multline*}
\subsection{Discrete bilinear forms}
Let us begin by writing the bilinear forms elementwise as follows
\begin{equation*}
a(\boldsymbol{\rho},\btau)=\sum_{E\in\CT_h}a^E(\boldsymbol{\rho},\btau)=\sum_{E\in\CT_h}\frac{1}{\mu}\int_E\boldsymbol{\rho}^{\texttt{d}}:\btau^{\texttt{d}}+\frac{1}{4\lambda+6\mu}\int_E\tr(\boldsymbol{\rho})\tr(\btau),
\end{equation*}
\begin{equation*}
b(\btau,\bv)=\sum_{E\in\CT_h}b^E(\btau,\bv)=\sum_{E\in\CT_h}\int_E\bv\cdot\bdiv\btau.
\end{equation*}
and
\begin{equation*}
c(\bw,\bv)=\sum_{E\in\CT_h}c^E(\bw,\bv)=\sum_{E\in\CT_h}\int_E\bw\cdot\bv.
\end{equation*}
We denote by $\mathcal{P}_{k}^h$ the $\L^2$-orthogonal projection onto $\mathbf{Q}_h$ as follows
$$\mathcal{P}_{k}^h:\L^2(\O)^{2}\rightarrow \mathbf{Q}_{h}:=\{\boldsymbol{q}\in \L^2(\O)^{2}\;\ \boldsymbol{q}|_{E}\in\textsc{P}_{k}(E)^2\quad \forall E\in \mathcal{T}_h\},$$ 
which for $\bv\in\L^2(\O)^{2}$ satisfies
\begin{equation*}
\ds\int_E\mathcal{P}_{k}^h(\bv)\cdot\boldsymbol{q}=\int_E\bv\cdot\boldsymbol{q}\quad\forall E\in\mathcal{T}_h, \quad\forall\boldsymbol{q}\in\textsc{P}_{k}(E)^2.
\end{equation*}
Observe that $\mathcal{P}_{k}^h(\bv)|_E=\mathcal{P}_{k}^h(\bv|_E)$. %Moreover, $\mathcal{P}_{k}^h(\tau)$ is explicitly computable for every $\tau\in\mathbf{W}_h^E$ using only its degree of freedom  \eqref{eq:dof_normal0}--\eqref{eq:dof_rot0}. 
%
%On the other hand, for   $\boldsymbol{q}\in \textbf{\textsc{P}}_{k}(E)$ we know that  there exist unique $\boldsymbol{q}^{\bot}\in(\nabla \textsc{P}_{k+1}(E)^{\bot|_{\textbf{\textsc{P}}_k(E)}}\cap \textbf{\textsc{P}}_{k}(E)) $ and  $\widetilde{q}\in  \textsc{P}_{k+1}(E)$, which is unique up to a constant, such that $\boldsymbol{q}=\boldsymbol{q}^{\bot}+\nabla\widetilde{q}$, (see \cite{{MR3629152}} for more details). Then 
%\begin{equation*}
%\ds\int_E\tau_{h}\cdot\boldsymbol{q}=\int_{E}\tau_{h}\cdot\boldsymbol{q}^{\bot}+ \int_{E}\tau_{h}\cdot\nabla\widetilde{q}=\int_{E}\tau_{h}\cdot\boldsymbol{q}^{\bot}-\int_{E}\widetilde{q}\div\tau+\int_{\partial E}\tau\cdot\n\widetilde{q}.
%\end{equation*}
Also, for $m\in\{0,1,\ldots,k+1\}$. For this operator, the following a priori error estimate holds (see \cite{MR3614887} for further details),
\begin{equation*}
\|\bv-\mathcal{P}_k^h\bv\|_{0,E}\leq C h_E^m|\bv|_{m,E},\qquad\forall\bv\in\H^m(E)^{2}, \,\forall E\in\mathcal{T}_h.
\end{equation*}

In the spirit of \cite[Subsections 4.1 and 4.4]{MR3629152}, we assign to each polygonal element $E \in \mathcal{T}_h$ the $\boldsymbol{\mathcal{L}}^2(E)$-orthogonal projector $\Pi_h^E : \boldsymbol{\mathcal{L}}^2(E) \rightarrow \textbf{\textsc{P}}_k(E)$, whose key properties are recalled below from \cite[Section 4]{MR3614887}.%Now, inspired by the analysis presented in \cite[Subection 4.1 and 4.4]{MR3629152}, for each $E\in\mathcal{T}_h$ we define  $\Pi_h ^E:\FL{\boldsymbol{\mathcal{L}}}^2(E)\rightarrow\GR{\textbf{\textsc{P}}_{k}(E)}$ be the $\FL{\boldsymbol{\mathcal{L}}}^2(E)$-orthogonal projector, which satisfies the following properties (see \cite[Section 4]{MR3614887}):%\FL{that is to say $\Pi_h ^E$ stands for the operator $\mathcal{P}_{k}$ acting along each row of a tensor in $\mathbb{L}^2(E)$.
%$$\int_{E}\Pi_h ^E(\btau):\boldsymbol{\rho}=\int_{E}\btau:\boldsymbol{\rho},$$
 %such that satisfies the following properties}
\begin{itemize}
\item[(A.1)] The  stability estimate 
\begin{equation*}
\|\Pi_h^E(\btau)\|_{0,E}\leq\|\btau\|_{0,E},\qquad\forall\btau\in\boldsymbol{\mathcal{H}}(\bdiv;E),
\end{equation*}
\item[(A.2)] $\ds\int_E\big(\Pi_h^E\btau \big)^{\texttt{d}}:\big(\Pi_h^E\boldsymbol{\rho} \big)^{\texttt{d}}=\int_E\big(\Pi_h^E\btau \big)^{\texttt{d}}:\boldsymbol{\rho}^{\texttt{d}}$, for all $\btau,\boldsymbol{\rho}\in\boldsymbol{\mathcal{H}}(\bdiv;E)$, and
\item[(A.3)] given an integer $0\leq m\leq k+1$, there exists  $C>0$, independent of $E$, such that 
\begin{equation*}
\| \btau-\Pi_h^E\btau\|_{0,E}\leq C h_E^m|\btau|_{m,E},\qquad\forall\btau\in\boldsymbol{\mathcal{H}}^m(E).
\end{equation*}
%\GR{or at least for all $\btau\in\bcW_{\nabla\curl}^{m}(E)$ where
%\begin{equation*}
%\bcW_{\nabla\curl}^{m}(E):=\{ \boldsymbol{\xi}\in\mathbb{H}^m(\O)\,:\, \boldsymbol{\xi}^{\tD}=\nabla\curl\boldsymbol{w}\,\,\text{for some}\,\,\boldsymbol{w}\in\H^{r+2}(E)^{2} \}
%\end{equation*}}
\end{itemize}

Let us remark that (A.1) may change if we assume (A.2). Indeed, if $\btau\in\boldsymbol{\mathcal{H}}(\bdiv;E)$, elementary algebraic manipulations reveal
\begin{multline*}
\|\Pi_h^E\btau\|_{0,E}^2=\int_E(\Pi_h^E\btau)^{\texttt{d}}:(\Pi_h^E\btau)^{\texttt{d}}+\frac{1}{4}\|\tr(\Pi_h^E\btau)\|_{0,E}^2\\
=\int_E(\Pi_h^E\btau)^{\texttt{d}}:\btau^{\texttt{d}}+\frac{1}{4}\|\tr(\Pi_h^E\btau)\|_{0,E}^2\\
\leq\int_E\left(\Pi_h^E\btau-\frac{1}{2}\tr(\Pi_h^E\btau)\mathbf{I}\right):\btau^{\texttt{d}}+\frac{1}{2}\|\Pi_h^E\btau\|_{0,E}^2\\
 \leq \|\Pi_h^E\btau\|_{0,E}\|\btau^{\texttt{d}}\|_{0,E}+\frac{\sqrt{2}}{2}\|\Pi_h^E\btau\|_{0,E}\|\btau^{\texttt{d}}\|_{0,E}+\frac{1}{2}\|\Pi_h^E\btau\|_{0,E}^2,
\end{multline*}
implying that $\displaystyle\frac{1}{2}\|\Pi_h^E\btau\|\leq \left(\frac{2+\sqrt{2}}{2} \right)\|\btau^{\texttt{d}}\|_{0,E}$ where, using again the definition of $\btau^{\texttt{d}}$ we obtain
\begin{equation*}
\|\Pi_h^E\btau\|_{0,E}\leq\left(3+\frac{3\sqrt{2}}{2} \right)\|\btau\|_{0,E}\quad\forall\btau\in\boldsymbol{\mathcal{H}}(\bdiv;E).
\end{equation*}

%From , (A.1) and (A.3) are straightforward, meanwhile (A.2) follows from the fact that if $\boldsymbol{\rho}\in\mathbb{P}_k(E)$ it holds that $\boldsymbol{\rho}^{\tD}\in\mathbb{P}_k(E)$ and, for all $\boldsymbol{\rho}, \btau\in\mathbb{P}_k(E$), we have
%\begin{align*}
%\int_E\left(\Pi_k^E\btau\right)^{\tD}:\left(\Pi_k^E\boldsymbol{\rho}\right)^{\tD}:=\int_E\Pi_k^E\boldsymbol{\rho}:\left(\Pi_k^E\btau\right)^{\tD}=\int_E \boldsymbol{\rho}:\left(\Pi_k^E\btau\right)^{\tD}=\int_E\left(\Pi_k^E\btau\right)^{\tD}:\boldsymbol{\rho}^{\tD}.
%\end{align*}
%\FL{Let us note that A.2) not necessarily implies A.1). Indeed, if $\btau\in\boldsymbol{\mathcal{H}}(\bdiv;E)$, simple calculations reveal
%\begin{multline*}
%\|\Pi_h\|
%\end{multline*}
%
% }

In addition, let $S^E(\cdot,\cdot)$ represent any bilinear form endowed with symmetric positive definiteness and verifying%On the other hand, let $S^E(\cdot,\cdot)$ be any symmetric positive definite bilinear form that satisfies
\begin{equation*}
\label{eq:s_stable}
\dfrac{c_0}\mu{}\int_E\btau_h:\btau_h\leq S^E(\btau_h,\btau_h)\leq \dfrac{c_1}{\mu}\int_E\btau_h:\btau_h,\qquad\forall\btau_h\in\boldsymbol{\mathcal{H}}_h^E.
\end{equation*}
Both $c_0$ and $c_1$ are positive constants governed by the mesh regularity properties. Let us check the first inequality. Let $\btau_h\in\boldsymbol{\mathcal{H}}_h^E$. Then
\begin{multline*}
\int_E\btau_h:\btau_h=\frac{\mu}{\mu}\int_E\left(\btau_h^{\texttt{d}}+\frac{1}{2}\tr(\btau_h)\mathbf{I}\right):\left(\btau_h^{\texttt{d}}+\frac{1}{2}\tr(\btau_h)\mathbf{I}\right)\\
=\mu\left[\frac{1}{\mu}\int_E\btau_h^{\texttt{d}}:\btau_h^{\texttt{d}}+\frac{1}{4\mu}\int_E \tr(\btau_h)\tr(\btau_h)\right]
\leq\mu\left[\frac{1}{\mu}\int_E\btau_h^{\texttt{d}}:\btau_h^{\texttt{d}}+\frac{1}{2\mu}\int_E \btau_h:\btau_h\right]\\
\mu\left[\frac{1}{\mu}\int_E\sum_{i=1}^{N_E}\alpha_i\boldsymbol{\varphi}_i:\sum_{i=1}^{N_E}\alpha_i\boldsymbol{\varphi}_i \right]
=\mu\left[\left(\frac{1}{\mu}+\frac{1}{2\mu}\right)\sum_{i=1}^{N_E}\int_E (m_{i,E}(\btau_h)\boldsymbol{\varphi}_i)^2\right]\\
=\mu\sum_{i=1}^{N_E}m_{i,E}(\btau_h)^2\frac{3}{2\mu}\|\boldsymbol{\varphi}_i\|_{0,E}^2\leq \mu\sum_{i=1}^{N_E}m_{i,E}(\btau_h)^2\frac{C}{\mu},
\end{multline*}
where we have used the existence of a constant $C>0$ such that $\|\boldsymbol{\varphi}_i\|_{0,E}\leq C$ (see \cite[Lemma 15]{MR3660301}). Then, since $S^E(\btau_h,\btau_h):=\displaystyle\sum_{i=1}^{N_E}m_{i,E}(\btau_h)^2$ and defining 
$c_0:=(C/\mu)^{-1}$, obtain the desired estimate.
\begin{remark}
We need to mention that the form $S^E(\btau_h,\btau_h):=\displaystyle\sum_{i=1}^{N_E}m_{i,E}(\btau_h)^2$ is the natural way to  be chosen. However, since we are focusing on the eigenvalue problem, it is well known that stabilization terms on the numerical methods may introduce spurious eigenvalues if are not correctly chosen (see for instance \cite{MR4177014,MR4253143,zbMATH06443639}). For this reason, we will scale $S^E(\cdot,\cdot)$ with a parameter $\gamma_E$ that we will change in order to observe its influence on the computation of the spectrum (cf. Section \ref{sec:numerics}).
\end{remark}

 Building upon this, the following local bilinear form is defined on each element%where $c_0$ and $c_1$ are positive constants depending on the mesh assumptions. Then, for each element we define the bilinear form
\begin{multline*}
\displaystyle a_h ^E(\bsig_h,\btau_h):=\dfrac{1}{\mu}\int_E\left(\Pi_h^E\bsig_h\right)^{\texttt{d}}:\left(\Pi_h^E\btau_h\right)^{\texttt{d}}+\frac{1}{4\lambda+6\mu}\int_E\tr\left(\Pi_h^E\bsig_h \right)\tr\left(\Pi_h^E\btau_h \right)\\
+S^E\left(\bsig_h-\Pi_h^E
\bsig_h,\btau_h-\Pi_h^E
\btau_h\right),
\end{multline*}
for  $\bsig_h,\btau_h\in\boldsymbol{\mathcal{H}}_h^E$ and, in a natural way, bilinear form $a^E_h(\cdot,\cdot)$ is written as the sum of the local contributions as follows
\begin{align*}
%\displaystyle a_h(\bsig,\btau)=\sum_{E\in\mathcal{T}_h}a_h^E(\bsig,\btau)&:=\sum_{E\in\mathcal{T}_h}\int_{E}\bdiv\bsig\cdot\bdiv\btau+b_h(\bsig,\btau),\\
\displaystyle a_h(\bsig_h,\btau_h):=\sum_{E\in\mathcal{T}_h}a_h^E(\bsig_h,\btau_h),\qquad \bsig_h,\btau_h\in\boldsymbol{\mathcal{H}}_h^E.
\end{align*}

Consistency and stability of the bilinear form $a_h^E(\cdot,\cdot)$ are established in the result below (see \cite[Lemma 4.6]{MR3614887})%The following result states that bilinear form $a_h^E(\cdot,\cdot)$ is consistent and stable \FL{(see \cite[Lemma 4.6]{MR3614887})}.
\begin{lemma}
\label{lmm:stab}
For each $E\in\mathcal{T}_h$ there holds
\begin{equation*}
a_h^E(\boldsymbol{\rho}_h,\btau_h)= a^E(\boldsymbol{\rho}_h,\btau_h)\quad \forall\boldsymbol{\rho}_h\in\textbf{\textsc{P}}_{k}(E),\quad\forall\btau_h\in\boldsymbol{\mathcal{H}}_h^E.
\end{equation*}
We also note the existence of positive constants $\alpha_1, \alpha_2$, uniform in both $h$ and $E$, such that
\begin{equation*}
\alpha_1 a^E(\btau_h,\btau_h)\leq a_h^E(\btau_h,\btau_h)\leq \alpha_2 \left(\|\btau_h\|_{0,E}^{2}+\|\btau_h-\Pi_h^E\btau_h\|_{0,E}^{2}\right)\quad\forall\btau_h\in\boldsymbol{\mathcal{H}}_h^E.
\end{equation*}
\end{lemma}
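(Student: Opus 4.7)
The plan is to verify the two parts of the lemma separately: polynomial consistency, which uses the $\mathbb{L}^2$-projection identities for $\Pi_h^E$, and the two-sided norm equivalence, which combines the hypothesized bounds on $S^E(\cdot,\cdot)$ with the stability estimate (A.1) of $\Pi_h^E$.

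For the consistency part, I fix $\boldsymbol{\rho}_h \in \mathbb{P}_k(E)$ and $\btau_h\in \mathbb{H}_h^E$. Since $\Pi_h^E$ reproduces polynomials, $\Pi_h^E\boldsymbol{\rho}_h=\boldsymbol{\rho}_h$, so the stabilization contribution $S^E(\boldsymbol{\rho}_h-\Pi_h^E\boldsymbol{\rho}_h,\btau_h-\Pi_h^E\btau_h)$ vanishes. For the deviatoric term I use property (A.2) with the roles chosen so that the projection lands on $\btau_h$: $\int_E (\Pi_h^E\btau_h)^{\texttt{d}}:(\Pi_h^E\boldsymbol{\rho}_h)^{\texttt{d}}=\int_E(\Pi_h^E\btau_h)^{\texttt{d}}:\boldsymbol{\rho}_h^{\texttt{d}}$; together with $\Pi_h^E\boldsymbol{\rho}_h=\boldsymbol{\rho}_h$ and the definition of the $\mathbb{L}^2$-projection applied to the polynomial test $\boldsymbol{\rho}_h^{\texttt{d}}\in\mathbb{P}_k(E)$, this yields $\int_E(\Pi_h^E\btau_h)^{\texttt{d}}:\boldsymbol{\rho}_h^{\texttt{d}}=\int_E\btau_h^{\texttt{d}}:\boldsymbol{\rho}_h^{\texttt{d}}$. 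For the trace term I observe that $\tr(\boldsymbol{\rho}_h)\mathbb{I}\in\mathbb{P}_k(E)$, so the defining property of $\Pi_h^E$ gives $\int_E\tr(\Pi_h^E\btau_h)\tr(\boldsymbol{\rho}_h)=\int_E\tr(\btau_h)\tr(\boldsymbol{\rho}_h)$. Adding the two yields $a_h^E(\boldsymbol{\rho}_h,\btau_h)=a^E(\boldsymbol{\rho}_h,\btau_h)$.

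For the upper bound of the stability estimate, I apply (A.1) to each projected term: $\|(\Pi_h^E\btau_h)^{\texttt{d}}\|_{0,E}\le C\|\Pi_h^E\btau_h\|_{0,E}\le C\|\btau_h\|_{0,E}$, and likewise for $\|\tr(\Pi_h^E\btau_h)\|_{0,E}$; since $\frac{1}{4\lambda+6\mu}\le \frac{1}{6\mu}$, the $\lambda$-dependent prefactor is absorbed in a constant depending only on $\mu$. The stabilization contribution is bounded directly by the right inequality on $S^E$, giving $S^E(\btau_h-\Pi_h^E\btau_h,\btau_h-\Pi_h^E\btau_h)\le \frac{c_1}{\mu}\|\btau_h-\Pi_h^E\btau_h\|_{0,E}^2$. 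Collecting both contributions produces the desired $\alpha_2(\|\btau_h\|_{0,E}^2+\|\btau_h-\Pi_h^E\btau_h\|_{0,E}^2)$.

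For the lower bound I split $\btau_h=\Pi_h^E\btau_h+(\btau_h-\Pi_h^E\btau_h)$ and use $\|\btau_h^{\texttt{d}}\|_{0,E}^2\le 2\|(\Pi_h^E\btau_h)^{\texttt{d}}\|_{0,E}^2+2\|(\btau_h-\Pi_h^E\btau_h)^{\texttt{d}}\|_{0,E}^2$, together with the boundedness of the deviator operator, and the analogous inequality for the trace. The key observation is that the pieces involving $\btau_h-\Pi_h^E\btau_h$ can be controlled by the stabilization term thanks to the lower bound $S^E(\cdot,\cdot)\ge \frac{c_0}{\mu}\|\cdot\|_{0,E}^2$; more precisely, $\frac{1}{\mu}\|(\btau_h-\Pi_h^E\btau_h)^{\texttt{d}}\|_{0,E}^2$ and $\frac{1}{4\lambda+6\mu}\|\tr(\btau_h-\Pi_h^E\btau_h)\|_{0,E}^2$ are both majorized (up to universal constants) by $\frac{1}{c_0}S^E(\btau_h-\Pi_h^E\btau_h,\btau_h-\Pi_h^E\btau_h)$. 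Combining with the projected contributions already present in $a_h^E(\btau_h,\btau_h)$ yields $a^E(\btau_h,\btau_h)\le \alpha_1^{-1}\,a_h^E(\btau_h,\btau_h)$.

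The main point requiring care is ensuring the constant $\alpha_1$ is independent of $\lambda$: this is the reason for using the reformulation of $a^E(\cdot,\cdot)$ in terms of the deviator, where the $\lambda$-dependent factor $\frac{1}{4\lambda+6\mu}$ only appears in the trace term and can be uniformly bounded by $\frac{1}{6\mu}$, so that all $\lambda$-dependencies are safely absorbed into the $\mu$-based constants inherited from $S^E$ and the boundedness of the deviator and trace maps. The other pieces are standard VEM arguments and involve no delicate estimates.
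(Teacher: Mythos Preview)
Your argument is correct and follows the standard VEM route: polynomial invariance of $\Pi_h^E$ kills the stabilization and reduces the projected bilinear form to the exact one via the $\mathbb{L}^2$-orthogonality identities, while the two-sided bound comes from splitting $\btau_h=\Pi_h^E\btau_h+(\btau_h-\Pi_h^E\btau_h)$ and trading the remainder against $S^E$ through its spectral equivalence with $\tfrac{1}{\mu}\|\cdot\|_{0,E}^2$. The paper does not give its own proof here but simply cites \cite[Lemma~4.6]{MR3614887}; your write-up is essentially the argument behind that citation, so there is nothing to compare beyond noting that you have supplied the details the paper outsources.
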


Now, for bilinear form $b(\cdot,\cdot)$ we have
\begin{equation*}
b(\btau_h,\bv_h)=\sum_{E\in\CT_h}b^E(\btau_h,\bv_h)=\sum_{E\in\CT_h}\int_E\bv_h\cdot\bdiv\btau_h,
\end{equation*}
which is explicitly computable with the degrees of freedom of $\boldsymbol{\mathcal{H}}_{0,h}$ and $\mathbf{Q}_h$, whereas for $c(\cdot,\cdot)$ we have
and
\begin{equation*}
c(\bw_h,\bv_h)=\sum_{E\in\CT_h}c^E(\bw_h,\bv_h)=\sum_{E\in\CT_h}\int_E\bw_h\cdot\bv_h,
\end{equation*}
which is  computable with the degrees of freedom of $\mathbf{Q}_h$
%\GR{Finally, we define the bilinear form $c_h(\cdot,\cdot)$  as:
%$$c(\boldsymbol{\rho}_h,\btau_h)\sum_{E\in CT_h}c^E(\boldsymbol{\rho}_h,\btau_h)=\sum_{E\in CT_h}\int_E\Pi_h^E\boldsymbol{\rho}_h:\Pi_h^E\btau_h,\quad\boldsymbol{\rho}_h,\btau_h\in\boldsymbol{\mathcal{H}}_h^E.$$}

With these bilinear forms at hand, we introduce the VEM discretization of \eqref{def:spectral_1}: find $\kappa_h\in\mathbb{R}^+$ and $(\boldsymbol{0},\boldsymbol{0})\neq(\boldsymbol{\rho}_h,\bu_h)\in\boldsymbol{\mathcal{H}}_{0,h}\times\mathbf{Q}_h$ such that 
\begin{equation}\label{def:spectral_disc}
\left\{
\begin{array}{rcll}
a_h(\boldsymbol{\rho}_h,\btau_h)+b(\btau_h,\bu_h) & = &0&  \forall\btau_h\in\boldsymbol{\mathcal{H}}_{0,h}, \\
b(\boldsymbol{\rho}_h,\bv_h)& = & -\kappa_h c(\bu_h,\bv_h) &  \forall\bv_h\in \mathbf{Q}_h.
\end{array}
\right.
\end{equation}
Now we introduce the discrete solution operators as follows
\begin{equation*}
\bT_{h}:\mathbf{Q}\rightarrow\mathbf{Q}_h,\quad
           \boldsymbol{f}\mapsto \bT_{h}\boldsymbol{f}:=\widehat{\bu}_h, \quad \boldsymbol{\mathcal{S}}_{h}:\mathbf{Q}\rightarrow\boldsymbol{\mathcal{H}}_{0,h},\quad
           \boldsymbol{f}\mapsto \boldsymbol{\mathcal{S}}_{h}\boldsymbol{f}:=\widehat{\boldsymbol{\rho}}_h
\end{equation*}
where the pair $(\widehat{\boldsymbol{\rho}}_h, \widehat{\bu}_h)\in\boldsymbol{\mathcal{H}}_{0,h}\times\mathbf{Q}_h$ is the solution of the following source problem
 \begin{equation}\label{def:sourcel_1_disc}
\left\{
\begin{array}{rcll}
a_h(\widehat{\boldsymbol{\rho}}_h,\btau_h)+b(\btau_h,\widehat{\bu}_h) & = &0&  \forall\btau_h\in\boldsymbol{\mathcal{H}}_{0,h}, \\
b(\widehat{\boldsymbol{\rho}}_h,\bv_h)& = & - c(\boldsymbol{f},\bv_h) &  \forall\bv_h\in \mathbf{Q}_h.
\end{array}
\right.
\end{equation}

The discrete kernel of $b(\cdot,\cdot)$ is defined by 
$\boldsymbol{\mathcal{V}}_h:=\{\btau_h\in \boldsymbol{\mathcal{H}}_{0,h}: b(\btau_h,\bv_h)=0\quad\forall \bv_h\in \mathbf{Q}_h\}\subset \boldsymbol{\mathcal{V}}$. Hence, invoking Lemma \ref{lmm:stab} it is direct to prove that $a_h(\cdot,\cdot)$ is $\boldsymbol{\mathcal{V}}_h$-coercive. More precisely, there exists a positive constant $\widehat{\alpha}$, independent of $h$ and $\lambda$, such that
$$a_h(\btau_h,\btau_h)\geq \widehat{\alpha}\|\btau_h\|_{\bdiv,\O}\quad\forall\btau_h\in\boldsymbol{\mathcal{V}}_h.$$
Moreover, the following inf-sup condition is satisfied (see \cite[Lemma 3.1]{MR3453481})
\begin{equation}\label{eq_inf-sup}
\sup_{\boldsymbol{0}\neq\btau_h\in \boldsymbol{\mathcal{H}}_{0,h}}\dfrac{b(\btau_h,\bv_h)}{\|\btau_h\|_{\bdiv,\O}}\geq \widetilde{\beta} \|\bv_h\|_{0,\O},
\end{equation} with $\widetilde{\beta}>0$ independent of $h$. By the previous results, together with the Babu\v{s}ka-Brezzi theory, the discrete operators $\bT_h$ and $\boldsymbol{\mathcal{S}}_h$ are well defined.

%We collect here the approximation properties required in the next section. Let $\bcI_k^h: \mathbb{H}^t(\O) \to \bcW_h$ be the tensorial VEM interpolation operator. For $t>1/2$, it holds (see \cite[Lemma 6]{BeiraoVemAcoustic2017})
%\begin{equation}\label{asymp_0}
%\norm{\btau - \bcI_k^h \btau}{0,\O} \leq C h^{\min\{t, k+1\}} \norm{\btau}{t,\O} \qquad \forall \btau \in \mathbb{H}^t(\O).
%\end{equation}
%For $\btau \in \mathbb{H}^t(\O)\cap \H(\div;\O)$ with $t\in (0, 1/2]$, one has the estimate (see \cite[Theorem 3.16]{hiptmair})
%\begin{equation}\label{asymp_00}
%\norm{\btau - \bcI_k^h \btau}{0,\O} \leq C h^t (\norm{\btau}{t,\O}+ \norm{\btau}{\bdiv,\O}).
%\end{equation}
%Moreover, $\bcI_k^h$ satisfies the commuting diagram property (see \cite[Lemma 5]{BeiraoVemAcoustic2017})
%\begin{equation}\label{asymp_Div}
%\norm{\bdiv (\btau - \bcI_k^h\btau) }{0,\O} = \norm{\bdiv \btau - \mathcal{P}{k}^h \bdiv \btau }{0,\O}
%\leq C h^{\min\{t, k\}} \norm{\bdiv\btau}{t,\O},
%\end{equation}
%for $\bdiv \btau \in \H^t(\O)^{2}$, where $\mathcal{P}{k}^h$ denotes the $\LO^2$-orthogonal projection onto $\textsc{P}_{k}$. Finally, we set $\btau_I:=\bcI_k^h(\btau)|_E\in \bcW_h^E$. 

We collect here the approximation properties required in the next section. 
Let  $\bcI_k^h: \boldsymbol{\mathcal{H}}^t(\O) \to \boldsymbol{\mathcal{H}}_h$ be the tensorial version of the VEM-interpolation operator. For $t>1/2$, it holds (see \cite[Lemma 6]{MR3660301})
\begin{equation*}\label{asymp0}
 \norm{\btau - \bcI_k^h \btau}_{0,\O} \leq C h^{\min\{t, k+1\}} \norm{\btau}_{t,\O} \qquad \forall \btau \in \boldsymbol{\mathcal{H}}^t(\O).
\end{equation*}
For $\btau \in \boldsymbol{\mathcal{H}}^t(\O)\cap \boldsymbol{\mathcal{H}}$ with $t\in (0, 1/2]$, the following estimate holds (see \cite[Theorem 3.16]{MR2009375})
\begin{equation*}\label{asymp00}
 \norm{\btau - \bcI_k^h \btau}_{0,\O} \leq C h^t (\norm{\btau}_{t,\O}
 + \norm{\btau}_{\bdiv,\O}).
\end{equation*}
A further property of $\mathcal{I}_k^h$ is the commuting diagram relation stated below (see \cite[Lemma 5]{MR3660301})
\begin{equation*}\label{asympDiv}
 \norm{\bdiv (\btau - \bcI_k^h\btau) }_{0,\O} = \norm{\bdiv \btau - \mathcal{P}_{k}^h \bdiv \btau }_{0,\O} 
 \leq C h^{\min\{t, k\}} \norm{\bdiv\btau}_{t,\O},
\end{equation*}
for  $\bdiv \btau \in \H^t(\O)^{2}$ with $t\in (0, 1/2]$, where  $\mathcal{P}_{k}^h$ denotes the $\L^2$-orthogonal projection onto $\textsc{P}_{k}$. Finally, and for simplicity, we define
$\btau_I:=\bcI_k^h(\btau)|_E\in \boldsymbol{\mathcal{H}}_h^E$.

As we did on the continuous setting, for all $\boldsymbol{\rho}_h,\btau_h\in\boldsymbol{\mathcal{H}}_{0,h}$ and for all $\bv_h,\bu_h\in\mathbf{Q}_h$, we define  the following discrete bilinear form 
\begin{equation*}
A_h((\boldsymbol{\rho}_h,\bu_h),(\btau_h,\bv_h)):=a_h(\boldsymbol{\rho}_h,\btau_h)+b(\btau_h,\bu_h)+b(\boldsymbol{\rho}_h,\bv_h).
\end{equation*}

\section{Spectral convergence}
\label{sec:conv}

The aim of this section is to prove spectral convergence. To conclude this convergence, it is necessary to analyze the convergence of operator $\bT_h$ to $\bT$ as $h\rightarrow 0$, since this property in conjunction with the classic theory of compact operators \cite{MR1115235}, let us conclude that both, the discrete eigenvalues and eigenfunctions  converge to the corresponding continuous counterparts.

The following result states  that the discrete solution operator $\bT_h$ converges to $\bT$ as $h\rightarrow 0$. 
\begin{theorem}
\label{thm:erroroperator} 
Let $\boldsymbol{f}\in\mathbf{Q}$ be such that $\widehat{\bu}:=\bT\boldsymbol{f}$, $\widehat{\boldsymbol{\rho}}:= \boldsymbol{\mathcal{S}}\boldsymbol{f}$ and $\widehat{\bu}_h:=\bT\boldsymbol{f}$,  $\widehat{\boldsymbol{\rho}}_h:= \boldsymbol{\mathcal{S}}_h\boldsymbol{f}$, where $(\widehat{\boldsymbol{\rho}},\widehat{\bu})\in\boldsymbol{\mathcal{H}}_0\times\mathbf{Q}$
is the solution of \eqref{def:sourcel_1} and $(\widehat{\boldsymbol{\rho}}_h,\widehat{\bu}_h)\in\boldsymbol{\mathcal{H}}_{0,h}\times\mathbf{Q}_h$ is the solution of \eqref{def:sourcel_1_disc}. Then, the following estimate holds
$$
\|(\boldsymbol{\mathcal{S}}-\boldsymbol{\mathcal{S}}_h)\boldsymbol{f}\|_{\bdiv,\O}+\|(\bT-\bT_h)\boldsymbol{f}\|_{0,\O}\leq  \widehat{C}h^s\left(\|\widehat{\bsig}\|_{s,\O}+\|\widehat{\bu}\|_{1+s,\O}\right)\leq  \widehat{C}h^s\|\boldsymbol{f}\|_{0,\O},%.\|\boldsymbol{f}\|_{0,\O},%+|\tilde{u}-\tilde{u}_I|_{1,\Omega}+|\tilde{u}-\tilde{u}_{\pi}|_{1,\Omega}\right).
$$
where the constant $\widehat{C}>0$ is independent of $h$ and $\lambda$,  and $s$ is the regularity exponent provided by Lemma \ref{lmm:add_eigen}.
\end{theorem}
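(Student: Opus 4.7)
The plan is to carry out a Céa-type error analysis adapted to the mixed VEM framework, mirroring the arguments used for the source problem in \cite{MR3860570} but with the pseudostress/displacement bilinear forms at hand. I would introduce the tensorial VEM interpolant $\widehat{\boldsymbol{\rho}}_I:=\bcI_k^h\widehat{\boldsymbol{\rho}}$ (with a suitable constant adjustment so that it lies in $\mathbb{H}_{0,h}$) and split
\begin{equation*}
\widehat{\boldsymbol{\rho}}-\widehat{\boldsymbol{\rho}}_h \;=\; (\widehat{\boldsymbol{\rho}}-\widehat{\boldsymbol{\rho}}_I)+(\widehat{\boldsymbol{\rho}}_I-\widehat{\boldsymbol{\rho}}_h).
\end{equation*}
The first summand is a pure interpolation error bounded by $C h^s(\|\widehat{\boldsymbol{\rho}}\|_{s,\O}+\|\bdiv\widehat{\boldsymbol{\rho}}\|_{1+s,\O})$ via the approximation estimates recalled just after \eqref{eq_inf-sup} together with Lemma \ref{lmm:add_eigen}. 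The key observation is that the discrete residual $\boldsymbol{e}_h:=\widehat{\boldsymbol{\rho}}_I-\widehat{\boldsymbol{\rho}}_h$ lies in the discrete kernel $\mathbb{V}_h$: for any $\bv_h\in\mathbf{Q}_h$ the commuting diagram yields $b(\widehat{\boldsymbol{\rho}}_I,\bv_h)=\int_\O\bv_h\cdot\mathcal{P}_k^h\bdiv\widehat{\boldsymbol{\rho}}=\int_\O\bv_h\cdot\bdiv\widehat{\boldsymbol{\rho}}=-c(\boldsymbol{f},\bv_h)$, while the discrete problem gives $b(\widehat{\boldsymbol{\rho}}_h,\bv_h)=-c_h(\boldsymbol{f},\bv_h)$; since $c$ and $c_h$ agree on polynomial test functions, subtraction yields $b(\boldsymbol{e}_h,\bv_h)=0$.

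Having $\boldsymbol{e}_h\in\mathbb{V}_h$, the $\mathbb{V}_h$-coercivity of $a_h$ and the standard consistency/stability trick take over. Writing $\bPi_h$ for the elementwise $\mathbb{L}^2$-projection assembled from the $\Pi_h^E$, I would decompose
\begin{equation*}
a_h(\boldsymbol{e}_h,\boldsymbol{e}_h)=a_h(\widehat{\boldsymbol{\rho}}_I-\bPi_h\widehat{\boldsymbol{\rho}},\boldsymbol{e}_h)+(a_h-a)(\bPi_h\widehat{\boldsymbol{\rho}},\boldsymbol{e}_h)+a(\bPi_h\widehat{\boldsymbol{\rho}}-\widehat{\boldsymbol{\rho}},\boldsymbol{e}_h)+\bigl[a(\widehat{\boldsymbol{\rho}},\boldsymbol{e}_h)-a_h(\widehat{\boldsymbol{\rho}}_h,\boldsymbol{e}_h)\bigr].
\end{equation*}
The second term vanishes by the polynomial consistency of Lemma \ref{lmm:stab}. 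The last bracket is treated via the first equations of \eqref{def:sourcel_1} and \eqref{def:sourcel_1_disc}: it becomes $b(\boldsymbol{e}_h,\widehat{\bu}_h-\widehat{\bu})$, and since $\boldsymbol{e}_h\in\mathbb{V}_h$ one may insert $\mathcal{P}_k^h\widehat{\bu}$ to reduce it to $b(\boldsymbol{e}_h,\mathcal{P}_k^h\widehat{\bu}-\widehat{\bu})$, which vanishes by $\L^2$-orthogonality since $\bdiv\boldsymbol{e}_h\in\mathbf{Q}_h$. The two surviving terms are controlled by the continuity part of Lemma \ref{lmm:stab} and of $a$, giving $\|\boldsymbol{e}_h\|_{\bdiv,\O}\lesssim\|\widehat{\boldsymbol{\rho}}-\widehat{\boldsymbol{\rho}}_I\|_{0,\O}+\|\widehat{\boldsymbol{\rho}}-\bPi_h\widehat{\boldsymbol{\rho}}\|_{0,\O}$, and Lemma \ref{lmm:add_eigen} closes the tensor estimate as $h^s\|\boldsymbol{f}\|_{0,\O}$.

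For the displacement I would apply the discrete inf-sup condition \eqref{eq_inf-sup} to $\widehat{\bu}_h-\mathcal{P}_k^h\widehat{\bu}\in\mathbf{Q}_h$: selecting $\btau_h\in\mathbb{H}_{0,h}$ with $\|\btau_h\|_{\bdiv,\O}\le 1$ realizing the supremum, the first equations of both problems together with polynomial consistency rewrite $b(\btau_h,\widehat{\bu}_h-\mathcal{P}_k^h\widehat{\bu})$ as a combination of $a_h(\widehat{\boldsymbol{\rho}}-\widehat{\boldsymbol{\rho}}_h,\btau_h)$ and already-bounded consistency/projection errors. The triangle inequality $\|\widehat{\bu}-\widehat{\bu}_h\|_{0,\O}\le\|\widehat{\bu}-\mathcal{P}_k^h\widehat{\bu}\|_{0,\O}+\|\mathcal{P}_k^h\widehat{\bu}-\widehat{\bu}_h\|_{0,\O}$, whose first addend is $O(h^{1+s})$ by Lemma \ref{lmm:add_eigen}, then closes the bound.

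The main obstacle is the bookkeeping associated with the VEM inconsistency $a_h\neq a$ outside polynomials: every appearance of $\widehat{\boldsymbol{\rho}}_I$ inside $a_h$ must be rerouted through the polynomial projection $\bPi_h\widehat{\boldsymbol{\rho}}$ before invoking the consistency part of Lemma \ref{lmm:stab}, producing extra stabilization contributions of the form $\|\widehat{\boldsymbol{\rho}}_I-\bPi_h\widehat{\boldsymbol{\rho}}_I\|_{0,E}$ that have to be absorbed into the interpolation and projection errors via the triangle inequality. Careful tracking of the coefficient $1/(4\lambda+6\mu)$ on the trace part of $a_h^E$ is what ensures the hidden constants remain independent of $\lambda$, as required by the locking-free statement of the title.
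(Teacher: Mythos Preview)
Your proposal is correct and follows essentially the same route as the paper. The paper organizes the argument with the displacement error first---splitting $\|\widehat{\bu}-\widehat{\bu}_h\|_{0,\O}$ through $\mathcal{P}_k^h\widehat{\bu}$, applying the discrete inf-sup \eqref{eq_inf-sup}, and then bounding $\|\widehat{\brho}_h-\bcI_k^h\widehat{\brho}\|_{0,\O}$ via the kernel/coercivity step---while you do the pseudostress first and the displacement afterwards; but the ingredients (commuting diagram to land in $\mathbb{V}_h$, $\mathbb{V}_h$-coercivity of $a_h$, rerouting through $\Pi_h^E\widehat{\brho}$ for consistency, discrete inf-sup for the displacement) are identical. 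One cosmetic remark: when you write ``$a_h(\widehat{\boldsymbol{\rho}}-\widehat{\boldsymbol{\rho}}_h,\btau_h)$'' in the displacement step you of course mean the expression $a(\widehat{\boldsymbol{\rho}},\btau_h)-a_h(\widehat{\boldsymbol{\rho}}_h,\btau_h)$ expanded through $\bcI_k^h\widehat{\brho}$ and $\Pi_h^E\widehat{\brho}$, since $a_h$ is only defined on discrete arguments; the paper makes that explicit.
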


\begin{proof}
Let $\widehat{\bu}_I$ and $\bcI_k^h\widehat{\brho}$  be the best approximations of $\widehat{\bu}$ and $\widehat{\boldsymbol{\rho}}$, respectively.  We begin by analyzing the error associated to the displacement. From the definitions of the solution operators $\bT$ and $\bT_h$, elementary calculations lead to 
\begin{equation}\label{eq_initial}
\|(\bT-\bT_h)\bF\|_{0,\O}=\|\widehat{\bu}-\widehat{\bu}_h\|_{0,\O}\leq \underbrace{\|\widehat{\bu}-\mathcal{P}_{k}^h(\widehat{\bu})\|_{0,\O}}_{\textrm{I}}+\underbrace{\|\mathcal{P}_{k}^h(\widehat{\bu})-\widehat{\bu}_h\|_{0,\O},}_{\textrm{II}}
\end{equation}
where the task is to estimate terms $\textrm{I}$ and $\textrm{II}$. For $\textrm{I}$, using the regularity of $\widehat{\bu}$  and the dependency on the data $\bF$, we have 
\begin{equation}
\label{eq:bound_I}
\textrm{I}\,\leq C h^s\|\bF\|_{0,\O}.
\end{equation}

Now for the term $\textrm{II}$, we taking $\bv_h:=\boldsymbol{\mathcal{P}}_k(\widehat{\bu})-\widehat{\bu}_h\in \mathbf{Q}_h$ in the discrete inf-sup condition \eqref{eq_inf-sup}, we obtain
\begin{equation*}
\|\mathcal{P}_{k}^h(\widehat{\bu})-\widehat{\bu}_h\|_{0,\O}\leq\dfrac{1}{\widetilde{\beta}}\sup_{\boldsymbol{0}\neq\btau_h\in \boldsymbol{\mathcal{H}}_{0,h}}\dfrac{b(\btau_h,\mathcal{P}_{k}^h(\widehat{\bu})-\widehat{\bu}_h)}{\|\btau_h\|_{\bdiv,\O}}.
\end{equation*}
%\begin{multline*}
%\|(\bT-\bT_h)\bF\|_{0,\O}\leq \|(\bT-\bT_h)\bF\|_{0,\O}+\|(\boldsymbol{\mathcal{S}}-\boldsymbol{\mathcal{S}})\bF\|_{\bdiv,\O}\\
%\leq \underbrace{\|\widehat{\bu}-\widehat{\bu}_I\|_{0,\O}+\|\widehat{\boldsymbol{\rho}}-\bcI_k^h\widehat{\brho}\|_{\bdiv,\O}}_{\boldsymbol{\textrm{I}}}+
%\underbrace{\|\widehat{\bu}_I-\widehat{\bu}_h\|_{0,\O}+\|\bcI_k^h\widehat{\brho}-\widehat{\boldsymbol{\rho}}_h\|_{\bdiv,\O}}_{\boldsymbol{\textrm{II}}},
%\end{multline*}

Now, since $\btau_h\in\boldsymbol{\mathcal{H}}_{0,h}$, we have that  $\bdiv\btau_h\in\mathbf{Q}_h$. Then, recalling that $\boldsymbol{\mathcal{P}}_k$ is the $\L^2$-orthogonal projection and using the first equality in \eqref{def:sourcel_1} and \eqref{def:sourcel_1_disc},, we obtain 
\begin{multline*}
b(\btau_h,\mathcal{P}_{k}^h(\widehat{\bu})-\widehat{\bu})=b(\btau_h,\mathcal{P}_{k}^h(\widehat{\bu}))-b(\btau_h,\widehat{\bu})=b(\btau_h,\widehat{\bu})-b(\btau_h,\widehat{\bu}_h)\\
=a_h(\widehat{\brho}_h,\btau_h)-a(\widehat{\brho},\btau_h)=a_h(\widehat{\brho}_h-\bcI_k^h\widehat{\brho},\btau_h) +a_h(\bcI_k^h\widehat{\brho},\btau_h)-a(\widehat{\brho},\btau_h)\\
=a_h(\widehat{\brho}_h-\bcI_k^h\widehat{\brho},\btau_h) +\sum_{E\in\CT_h} [a_h^E(\bcI_k^h\widehat{\brho}-\Pi_h^E\widehat{\boldsymbol{\rho}},\btau_h)+a^E(\Pi_h^E\widehat{\boldsymbol{\rho}}-\widehat{\boldsymbol{\rho}},\btau_h)]\\
\leq C\left(\|\widehat{\brho}_h-\bcI_k^h\widehat{\brho}\|_{0,\O}\|\btau_h\|_{0,\O}\right.\\
\left.+\sum_{E\in\CT_h}\left(\|\widehat{\boldsymbol{\rho}}-\bcI_k^h\widehat{\brho}\|_{0,E}+2\|\widehat{\boldsymbol{\rho}}-\Pi_h^E\widehat{\boldsymbol{\rho}}\|_{0,E}\right)\|\btau_h\|_{0,E}\right).
\end{multline*}
Thanks to the previous estimate, we conclude that:
\begin{equation}\label{eq_cotapro}
\|\mathcal{P}_{k}^h(\widehat{\bu})-\widehat{\bu}_h\|_{0,\O}\leq\dfrac{C}{\widetilde{\beta}}\left(\|\widehat{\brho}_h-\bcI_k^h\widehat{\brho}\|_{0,\O}+\sum_{E\in\CT_h}\left(\|\widehat{\boldsymbol{\rho}}-\bcI_k^h\widehat{\brho}\|_{0,E}+2\|\widehat{\boldsymbol{\rho}}-\Pi_h^E\widehat{\boldsymbol{\rho}}\|_{0,E}\right)\right).
\end{equation}
The following step consists in bounding the first term of the previous estimate, implying that now we need to focus on the term $\|(\boldsymbol{\mathcal{S}}-\boldsymbol{\mathcal{S}}_h)\boldsymbol{f}\|_{\bdiv,\O}$. With this purpose, the commutative diagram property reveals
$$\bdiv\bcI_k^h\widehat{\brho}=\mathcal{P}_{k}^h(\bdiv\brho)=-\mathcal{P}_{k}^h(\boldsymbol{f})=\bdiv\widehat{\brho}_h,$$
 and as a consdequence, $\bcI_k^h\widehat{\brho}-\widehat{\brho}_h\in \boldsymbol{\mathcal{V}}_h$. Now, by  defining $\bphi_h:=\widehat{\brho}_h-\bcI_k^h\widehat{\brho}$\, we have
\begin{multline*}
C\|\widehat{\brho}_h-\bcI_k^h\widehat{\brho}\|_{\bdiv,\O}^2=C\|\widehat{\brho}_h-\bcI_k^h\widehat{\brho}\|_{0,\O}^2\leq a_h(\widehat{\brho}_h-\bcI_k^h\widehat{\brho},\bphi_h)\\
=a_h(\widehat{\brho}_h,\bphi_h)-a_h(\bcI_k^h\widehat{\brho},\bphi_h)=-b(\bphi_h,\widehat{\bu}_h)-a_h(\bcI_k^h\widehat{\brho},\bphi_h)=-a_h(\bcI_k^h\widehat{\brho},\bphi_h)\\
=-\sum_{E\in\CT_h}[a_h^E(\bcI_k^h\widehat{\brho}-\Pi_h^E\widehat{\boldsymbol{\rho}},\bphi_h)+a^E(\Pi_h^E\widehat{\boldsymbol{\rho}}-\widehat{\boldsymbol{\rho}},\bphi_h)+a^E(\widehat{\boldsymbol{\rho}},\bphi_h)]\\
=\sum_{E\in\CT_h}[a_h^E(\Pi_h^E\widehat{\boldsymbol{\rho}}-\bcI_k^h\widehat{\brho},\bphi_h)+a^E(\widehat{\boldsymbol{\rho}}-\Pi_h^E\widehat{\boldsymbol{\rho}},\bphi_h)]\\
\leq C\sum_{E\in\CT_h}\left(\|\widehat{\boldsymbol{\rho}}-\bcI_k^h\widehat{\brho}\|_{0,\O}+2\|\widehat{\boldsymbol{\rho}}-\Pi_h^E\widehat{\boldsymbol{\rho}}\|_{0,\O}\right)\|\bphi_h\|_{0,\O}.
\end{multline*}
Then
$$\|\widehat{\brho}_h-\bcI_k^h\widehat{\brho}\|_{0,\O}\leq C\sum_{E\in\CT_h}\left(\|\widehat{\boldsymbol{\rho}}-\bcI_k^h\widehat{\brho}\|_{0,\O}+2\|\widehat{\boldsymbol{\rho}}-\Pi_h^E\widehat{\boldsymbol{\rho}}\|_{0,\O}\right).$$
Replacing the previous estimate in \eqref{eq_cotapro}, we arrive to
\begin{equation}\label{eq_cotapro2}
\|\mathcal{P}_{k}^h(\widehat{\bu})-\widehat{\bu}_h\|_{0,\O}\leq\dfrac{C}{\widetilde{\beta}}\left(\sum_{E\in\CT_h}\left(2\|\widehat{\boldsymbol{\rho}}-\bcI_k^h\widehat{\brho}\|_{0,E}+4\|\widehat{\boldsymbol{\rho}}-\Pi_h^E\widehat{\boldsymbol{\rho}}\|_{0,E}\right)\right)\leq \dfrac{C}{\widetilde{\beta}}h^s\|\bF\|_{0,\O}.
\end{equation}
Thus, combining \eqref{eq_initial}, \eqref{eq:bound_I}, and \eqref{eq_cotapro2}, the proof is complete.
%\begin{multline}
%\label{eq:term_II}
%\textrm{II}\leq A_h((\bcI_k^h\widehat{\brho}-\widehat{\boldsymbol{\rho}}_h,\widehat{\bu}_I-\widehat{\bu}_h),(\btau_h,\bv_h))=A_h((\bcI_k^h\widehat{\brho},\widehat{\bu}_I),(\btau_h,\bv_h))-c_h(\bF,\bv_h)\\
%=\sum_{E\in\CT_h} [a_h^E(\bcI_k^h\widehat{\brho},\btau_h)+b^E(\btau_h,\widehat{\bu}_I)+b^E(\bcI_k^h\widehat{\brho},\bv_h)]-c_h(\bF,\bv_h)\\
%=\sum_{E\in\CT_h}[a_h^E(\bcI_k^h\widehat{\brho}-\widehat{\boldsymbol{\rho}}_\pi,\btau_h)+a^E(\widehat{\boldsymbol{\rho}}_\pi-\widehat{\boldsymbol{\rho}},\btau_h)]\\
%+a(\widehat{\boldsymbol{\rho}},\btau_h)+b(\btau_h,\widehat{\bu}_I)+b(\bcI_k^h\widehat{\brho},\bv_h)-c_h(\bF,\bv_h)\\
%=\underbrace{\sum_{E\in\CT_h}[a_h^E(\bcI_k^h\widehat{\brho}-\widehat{\boldsymbol{\rho}}_\pi,\btau_h)+a^E(\widehat{\boldsymbol{\rho}}_\pi-\widehat{\boldsymbol{\rho}},\btau_h)]}_{\mathcal{A}}+\underbrace{b(\btau_h,\widehat{\bu}_I-\widehat{\bu})}_{\mathcal{B}}\\
%+\underbrace{b(\bcI_k^h\widehat{\brho}-\widehat{\boldsymbol{\rho}},\bv_h)}_{\mathcal{C}}+\underbrace{c(\bF,\bv_h)-c_h(\bF,\bv_h)}_{\mathcal{D}}
%%=\sum_{E\in\CT_h} a_h^E(\bcI_k^h\widehat{\brho}-\boldsymbol{\rho}_\pi,\btau_h)+a_h^E(\widehat{\boldsymbol{\rho}}_\pi,\btau_h)+b^E(\btau_h,\widehat{\bu}_I)+b(\bcI_k^h\widehat{\brho},\bv_h)-c_h(\bF,\bv_h)
%\end{multline}
\end{proof}

The convergence result of Theorem \ref{thm:erroroperator}, in conjunction with the spectral theory of \cite[Chapter IV]{MR0203473} and \cite[Theorem 9.1]{MR2652780}, suffices to certify that the discrete eigenvalue problem is free of spurious modes, as the following theorem makes explicit.
%With Theorem \ref{thm:erroroperator} at hand, and invoking the  well established theory of  \cite[Chapter IV]{MR0203473} and \cite[Theorem 9.1]{MR2652780}, \FL{we are now in a position to conclude that our numerical method does not introduce spurious eigenvalues}. This is stated in the following theorem.
\begin{theorem}
	\label{thm:spurious_free}
	Let $V\subset\mathbb{C}$ be an open set containing $\sp(\bT)$. Then, there exists $h_0>0$ such that $\sp(\bT_h)\subset V$ for all $h<h_0$.
\end{theorem}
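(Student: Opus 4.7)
The plan is to derive the theorem as a direct consequence of operator-norm convergence $\bT_h \to \bT$ in $\mathcal{L}(\mathbf{Q})$, which is the key ingredient required by the abstract spectral perturbation theory for compact operators from \cite{MR0203473,MR2652780}. First, I would extract from Theorem \ref{thm:erroroperator} the quantitative bound
\[
\|(\bT-\bT_h)\boldsymbol{f}\|_{0,\O} \leq C h^s \|\boldsymbol{f}\|_{0,\O} \qquad \forall \boldsymbol{f}\in\mathbf{Q},
\]
which yields $\|\bT-\bT_h\|_{\mathcal{L}(\mathbf{Q})} \leq C h^s \to 0$ as $h\to 0$. In particular, the triangle inequality gives $\|\bT_h\|\leq \|\bT\|+C h^s$, so all the operators $\bT_h$ (for $h$ small) and $\bT$ itself have spectra contained in a common closed disk $\overline{B_R(0)}\subset\mathbb{C}$.

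Next, I would reduce the statement to a classical resolvent perturbation argument. Recall from Theorem \ref{thrm:spec_char_T} that $\bT$ is compact and self-adjoint, so $\sp(\bT)$ is closed. The set $K:=(\mathbb{C}\setminus V)\cap \overline{B_R(0)}$ is compact and disjoint from $\sp(\bT)$. Consequently the map $z\mapsto (zI-\bT)^{-1}$ is continuous on $K$, hence bounded: there exists $M>0$ such that
\[
\|(zI-\bT)^{-1}\|_{\mathcal{L}(\mathbf{Q})}\leq M \qquad \forall z\in K.
\]

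Then, for every $z\in K$, I would factorise
\[
zI-\bT_h = (zI-\bT)\Bigl(I-(zI-\bT)^{-1}(\bT-\bT_h)\Bigr).
\]
The right-hand factor is invertible by Neumann series whenever $M\,\|\bT-\bT_h\|<1$. Choosing $h_0>0$ so that $C h_0^s M<1$ (possible since the bound on $\|\bT-\bT_h\|$ is uniform in $z$), for every $h<h_0$ and every $z\in K$ the operator $zI-\bT_h$ is invertible. This yields $\sp(\bT_h)\cap K=\emptyset$, and since the spectra of $\bT_h$ already lie in $\overline{B_R(0)}$, we conclude $\sp(\bT_h)\subset V$.

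The only subtle point to verify carefully is the uniform-in-$z$ control of the perturbation, but this is immediate because the operator-norm estimate on $\bT-\bT_h$ depends only on $h$, not on $z$; the rest of the argument is the textbook resolvent perturbation scheme. No property-$P_1$/$P_2$ à la Descloux--Nassif--Rappaz is needed here because we already have genuine norm convergence, which is stronger than the collectively compact framework and implies both non-pollution of the spectrum and, separately, the convergence of eigenvalues (the latter being addressed in the subsequent sections).
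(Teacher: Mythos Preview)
Your proposal is correct and follows essentially the same approach as the paper: the paper does not give a detailed proof of this theorem but simply states that, with Theorem~\ref{thm:erroroperator} (operator-norm convergence $\|\bT-\bT_h\|_{\mathcal{L}(\mathbf{Q})}\to 0$) at hand, the result follows from the classical spectral perturbation theory in \cite[Chapter~IV]{MR0203473} and \cite[Theorem~9.1]{MR2652780}. Your resolvent/Neumann-series argument is precisely the standard mechanism behind those references, so you have just made explicit what the paper leaves to citation.
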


We end this section providing an improvement for the $\L^2$ order of the displacement.
\begin{lemma}
\label{lmm:L2order}
There exists a constant $C>0$, independent of $\lambda$ and $h$, such that
$$\|\bu-\bu_h\|_{0,\O}\leq Ch^{2s}.$$
\end{lemma}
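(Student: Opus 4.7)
The plan is to exploit the enhanced regularity of eigenfunctions provided by Lemma \ref{lmm:add_eigen}(ii) and combine it with a duality (Aubin--Nitsche) argument to double the convergence rate already established in Theorem \ref{thm:erroroperator}. From the classical theory of spectral approximation of compact operators (cf.\ \cite[Chapter IV]{MR0203473}, \cite[Theorem 9.1]{MR2652780}), for a simple eigenvalue $\kappa$ with eigenfunction $\bu$ (the case of higher multiplicity being analogous through the invariant subspace) one has the well-known estimate
\begin{equation*}
\|\bu - \bu_h\|_{0,\O} \,\leq\, C\,\|(\bT - \bT_h)\bu\|_{0,\O},
\end{equation*}
so that it suffices to bound $\|(\bT - \bT_h)\bu\|_{0,\O}$ by $h^{2s}$ when the datum is the smooth eigenfunction, which by Lemma \ref{lmm:add_eigen}(ii) lies in $\H^{1+r}(\O)^2$ for every $r>0$.

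I would start from the same splitting as in the proof of Theorem \ref{thm:erroroperator}, namely $(\bT - \bT_h)\bu = (\widehat{\bu} - \mathcal{P}_{k}^h\widehat{\bu}) + (\mathcal{P}_{k}^h\widehat{\bu} - \widehat{\bu}_h)$, with $\widehat{\bu} := \bT\bu = \bu/\kappa$ and $\widehat{\bu}_h := \bT_h\bu$. The first summand is of order $h^{1+s}\le h^{2s}$, thanks to the enhanced smoothness that $\widehat{\bu}$ inherits from $\bu$ and the approximation property of $\mathcal{P}_{k}^h$. For the second summand I would not invoke the inf-sup condition \eqref{eq_inf-sup} directly, since this only restores the order $h^s$. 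Instead, I would use a duality argument: consider $\mathcal{P}_{k}^h\widehat{\bu} - \widehat{\bu}_h \in \mathbf{Q}_h$ as right-hand side of an auxiliary source problem of the form \eqref{def:sourcel_1}, whose pseudostress component $\bphi$ satisfies, by Lemma \ref{lmm:add_eigen}(i), $\bphi \in \mathbb{H}^s(\O)$ and $\bdiv\bphi \in \H^{1+s}(\O)^2$, and test both the continuous and the discrete mixed equations against the VEM interpolant $\bcI_k^h\bphi$. This rewrites the $L^2$ error as a pairing between the pseudostress error $\widehat{\brho} - \widehat{\brho}_h$, already controlled at order $h^s$ in the proof of Theorem \ref{thm:erroroperator}, and the interpolation error of $\bphi$, again of order $h^s$, producing the desired additional factor $h^s$.

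The main technical difficulty lies in the non-conformity of the VEM, since $a_h \ne a$ on $\mathbb{H}_{0,h}$ because of the $\mathbb{L}^2$-projection $\Pi_h^E$ and the stabilization $S^E$. In the duality step, cross terms of the form $a(\widehat{\brho},\bcI_k^h\bphi) - a_h(\widehat{\brho}_h,\bcI_k^h\bphi)$ have to be split element by element by inserting and subtracting $\Pi_h^E\widehat{\brho}$, and then estimated using the orthogonality property (A.2) of $\Pi_h^E$, the stability bounds of Lemma \ref{lmm:stab}, and the approximation estimate (A.3), in the same spirit as the derivation of \eqref{eq_cotapro2}. One then has to track carefully that each of the additional consistency terms scales with the overall power $h^{2s}$. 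Combined with the bound for the first summand, together with the $\lambda$-independence assumption on the regularity constants of Lemma \ref{lmm:add_eigen}, this yields the asserted estimate $\|\bu - \bu_h\|_{0,\O} \le C h^{2s}$.
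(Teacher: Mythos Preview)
Your proposal and the paper's proof both rest on an Aubin--Nitsche duality argument and both handle the VEM consistency error by inserting $\Pi_h^E$ element by element, so the core machinery coincides. The organization, however, differs. The paper does \emph{not} pass through the spectral estimate $\|\bu-\bu_h\|_{0,\O}\le C\|(\bT-\bT_h)\bu\|_{0,\O}$, nor does it split via $\mathcal{P}_k^h\widehat{\bu}$; it works directly on the source-problem error. Concretely, it poses the dual problem \eqref{def:dual_1} with right-hand side $\widehat{\bu}-\widehat{\bu}_h$, tests the second dual equation with $\bv=\widehat{\bu}-\widehat{\bu}_h$ to obtain $\|\widehat{\bu}-\widehat{\bu}_h\|_{0,\O}^2$ on the left, inserts $\bcI_k^h\boldsymbol{\varphi}$, and uses the first equations of the continuous and discrete primal problems to rewrite the right-hand side as $T_1+T_2+T_3$, where $T_2=a(\widehat{\boldsymbol{\rho}}-\widehat{\boldsymbol{\rho}}_h,\bcI_k^h\boldsymbol{\varphi})$ is treated with the first dual equation together with the orthogonality $b(\widehat{\boldsymbol{\rho}}-\widehat{\boldsymbol{\rho}}_h,\bv_h)=0$, and $T_3=a(\widehat{\boldsymbol{\rho}}_h,\bcI_k^h\boldsymbol{\varphi})-a_h(\widehat{\boldsymbol{\rho}}_h,\bcI_k^h\boldsymbol{\varphi})$ is the consistency term handled via $\Pi_h^E$. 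Each $T_i$ then factors as a primal-error term of order $h^s$ times a dual-approximation term of order $h^s\|\widehat{\bu}-\widehat{\bu}_h\|_{0,\O}$, and dividing by $\|\widehat{\bu}-\widehat{\bu}_h\|_{0,\O}$ finishes. Your route through spectral theory and the enhanced eigenfunction regularity of Lemma~\ref{lmm:add_eigen}(ii) is therefore an unnecessary detour: the paper's argument uses only the source-problem regularity of Lemma~\ref{lmm:add_eigen}(i). One minor technical slip in your sketch: the claim that $\|\widehat{\bu}-\mathcal{P}_k^h\widehat{\bu}\|_{0,\O}$ is of order $h^{1+s}$ requires $1+s\le k+1$; in general the projection only delivers $h^{\min\{1+s,k+1\}}$.
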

\begin{proof}
We use  a standard duality argument. Let us consider the following auxiliary problem: Find $(\boldsymbol{\varphi},\boldsymbol{\psi})\in\boldsymbol{\mathcal{H}}_0\times\mathbf{Q}$ such that
 \begin{equation}\label{def:dual_1}
\left\{
\begin{array}{rcll}
a(\boldsymbol{\varphi},\btau)+b(\btau,\boldsymbol{\psi}) & = &0&  \forall\btau\in\boldsymbol{\mathcal{H}}_{0}, \\
b(\boldsymbol{\varphi},\bv)& = & - c(\widehat{\bu}-\widehat{\bu}_h,\bv)&  \forall\bv\in \mathbf{Q}.
\end{array}
\right.
\end{equation}
In this case the solution satisfies:
\begin{equation*}
\|\boldsymbol{\varphi}\|_{s,\O}+\|\bdiv \boldsymbol{\varphi}\|_{1+s,\O}+\|\boldsymbol{\psi}\|_{1+s}\leq C \|\widehat{\bu}-\widehat{\bu}_h\|_{0,\O}.
\end{equation*}

Let $\bcI_k^h\boldsymbol{\varphi}\in\boldsymbol{\mathcal{H}}_h$, be  the virtual interpolator of $\boldsymbol{\varphi}$. Adding and subtracting $\bcI_k^h\boldsymbol{\varphi}$ and testing \eqref{def:dual_1}  with $\bv=\widehat{\bu}-\widehat{\bu}_h$ yield to 
\begin{equation}
\label{eq:duality0}
\|\widehat{\bu}-\widehat{\bu}_h\|_{0,\O}^2=b(\boldsymbol{\varphi}-\bcI_k^h\boldsymbol{\varphi},\widehat{\bu}_h-\widehat{\bu})+b(\bcI_k^h\boldsymbol{\varphi},\widehat{\bu}_h-\widehat{\bu}).
\end{equation}

Using the continuous and discrete source problems \eqref{def:sourcel_1} and \eqref{def:sourcel_1_disc}, respectively, testing with discrete functions in \eqref{def:sourcel_1},  and subtracting the first equation on these  problems, we obtain  the following identity
\begin{equation*}
\label{eq:duality1}
a(\widehat{\boldsymbol{\rho}},\btau_h)-a_h(\widehat{\boldsymbol{\rho}},\btau_h)=b(\btau_h,\widehat{\bu}_h-\widehat{\bu})\quad\forall\btau_h\in\boldsymbol{\mathcal{H}}_{0,h},
\end{equation*}
whereas if we consider the second equation on each source problem, where the continuous one is tested with $\bv_h\in\mathbf{Q}_h$, we obtain after the subtraction the following equation
\begin{equation}
\label{eq:duality2}
b(\widehat{\boldsymbol{\rho}}-\widehat{\boldsymbol{\rho}}_h,\bv_h)=0\quad\forall\bv\in\mathbf{Q}_h.
\end{equation}

Now, from  \eqref{eq:duality0} we obtain the following identity
\begin{align*}
\|\widehat{\bu}-\widehat{\bu}_h\|_{0,\O}^2&=b(\boldsymbol{\varphi}-\bcI_k^h\boldsymbol{\varphi},\widehat{\bu}_h-\widehat{\bu})+a(\widehat{\boldsymbol{\rho}},\bcI_k^h\boldsymbol{\varphi})-a_h(\widehat{\boldsymbol{\rho}}_h,\bcI_k^h\boldsymbol{\varphi})\\
&=\underbrace{b(\boldsymbol{\varphi}-\bcI_k^h\boldsymbol{\varphi},\widehat{\bu}_h-\widehat{\bu})}_{\textrm{T}_1}+\underbrace{a(\widehat{\boldsymbol{\rho}}-\widehat{\boldsymbol{\rho}}_h,\bcI_k^h\boldsymbol{\varphi})}_{\textrm{T}_2}+\underbrace{a(\widehat{\boldsymbol{\rho}}_h,\bcI_k^h\boldsymbol{\varphi})-a_h(\widehat{\boldsymbol{\rho}}_h,\bcI_k^h\boldsymbol{\varphi})}_{\textrm{T}_3},
\end{align*}
where the task now is to estimate the terms $\textrm{T}_1$, $\textrm{T}_2$, and $\textrm{T}_3$.  For $\textrm{T}_2$ we have
\begin{align*}
\textrm{T}_2&= a(\widehat{\boldsymbol{\rho}}-\widehat{\boldsymbol{\rho}}_h,\bcI_k^h\boldsymbol{\varphi}-\boldsymbol{\varphi})+a(\widehat{\boldsymbol{\rho}}-\widehat{\boldsymbol{\rho}}_h,\boldsymbol{\varphi})\\
&= a(\widehat{\boldsymbol{\rho}}-\widehat{\boldsymbol{\rho}}_h,\bcI_k^h\boldsymbol{\varphi}-\boldsymbol{\varphi})+a(\boldsymbol{\varphi}, \widehat{\boldsymbol{\rho}}-\widehat{\boldsymbol{\rho}}_h)\\
&= a(\widehat{\boldsymbol{\rho}}-\widehat{\boldsymbol{\rho}}_h,\bcI_k^h\boldsymbol{\varphi}-\boldsymbol{\varphi})-b(\widehat{\boldsymbol{\rho}}-\widehat{\boldsymbol{\rho}}_h,\boldsymbol{\psi})\\
&=a(\widehat{\boldsymbol{\rho}}-\widehat{\boldsymbol{\rho}}_h,\bcI_k^h\boldsymbol{\varphi}-\boldsymbol{\varphi})-b(\widehat{\boldsymbol{\rho}}-\widehat{\boldsymbol{\rho}}_h,\boldsymbol{\psi}-\mathcal{P}_{k}^h(\boldsymbol{\psi}))-b(\widehat{\boldsymbol{\rho}}-\widehat{\boldsymbol{\rho}}_h,\mathcal{P}_{k}^h(\boldsymbol{\psi}))\\
&=a(\widehat{\boldsymbol{\rho}}-\widehat{\boldsymbol{\rho}}_h,\bcI_k^h\boldsymbol{\varphi}-\boldsymbol{\varphi})-b(\widehat{\boldsymbol{\rho}}-\widehat{\boldsymbol{\rho}}_h,\boldsymbol{\psi}-\mathcal{P}_{k}^h(\boldsymbol{\psi})),
\end{align*}
where for the last equality we have used \eqref{eq:duality2}. For $\textrm{T}_3$, we have to operate element by element as follows
\begin{align*}
T_3&=\sum_{E\in\CT_h}\left(a^E(\widehat{\boldsymbol{\rho}}_h,\bcI_k^h\boldsymbol{\varphi})-a_h^E(\widehat{\boldsymbol{\rho}}_h,\bcI_k^h\boldsymbol{\varphi})\right)\\
&=\sum_{E\in\CT_h}\left(a^E(\widehat{\boldsymbol{\rho}}_h-\Pi_h^E\widehat{\boldsymbol{\rho}},\bcI_k^h\boldsymbol{\varphi})-a_h^E(\widehat{\boldsymbol{\rho}}_h-\Pi_h^E\widehat{\boldsymbol{\rho}}),\bcI_k^h\boldsymbol{\varphi})\right)\\
&=\sum_{E\in\CT_h}\left(a^E(\widehat{\boldsymbol{\rho}}_h-\Pi_h^E\widehat{\boldsymbol{\rho}}),\bcI_k^h\boldsymbol{\varphi}-\Pi_h^E\boldsymbol{\varphi})-a_h^E(\widehat{\boldsymbol{\rho}}_h-\Pi_h^E\widehat{\boldsymbol{\rho}},\bcI_k^h\boldsymbol{\varphi}-\Pi_h^E\boldsymbol{\varphi})\right),
\end{align*}
where n the previous equality, we have used that $\Pi_h^E\widehat{\boldsymbol{\rho}}\in\boldsymbol{\mathcal{L}}^2(\O)$ and satisfies that $\widehat{\boldsymbol{\rho}}_\p|_{E}\in\boldsymbol{\mathcal{P}}_{k}(E)$ for all $E\in\CT_h$ and the same satisfies the term $\Pi_h^E\boldsymbol{\varphi}$.
Now, combining all the estimates, we obtain that
\begin{multline*}
\|\widehat{\bu}-\widehat{\bu}_h\|_{0,\O}^2=b(\boldsymbol{\varphi}-\bcI_k^h\boldsymbol{\varphi},\widehat{\bu}_h-\widehat{\bu})+a(\widehat{\boldsymbol{\rho}}-\widehat{\boldsymbol{\rho}}_h,\bcI_k^h\boldsymbol{\varphi}-\boldsymbol{\varphi})-b(\widehat{\boldsymbol{\rho}}-\widehat{\boldsymbol{\rho}}_h,\boldsymbol{\psi}-\mathcal{P}_{k}^h(\boldsymbol{\psi}))\\
+\sum_{E\in\CT_h}\left(a^E(\widehat{\boldsymbol{\rho}}_h-\Pi_h^E\widehat{\boldsymbol{\rho}}),\bcI_k^h\boldsymbol{\varphi}-\Pi_h^E\boldsymbol{\varphi})-a_h^E(\widehat{\boldsymbol{\rho}}_h-\Pi_h^E\widehat{\boldsymbol{\rho}},\bcI_k^h\boldsymbol{\varphi}-\Pi_h^E\boldsymbol{\varphi})\right)\\
\leq C\left(\|\bdiv(\boldsymbol{\varphi}-\bcI_k^h\boldsymbol{\varphi})\|_{0,\O}\|\widehat{\bu}-\widehat{\bu}_h\|_{0,\O}+\|\widehat{\boldsymbol{\rho}}-\widehat{\boldsymbol{\rho}}_h\|_{0,\O}\|\boldsymbol{\varphi}-\bcI_k^h\boldsymbol{\varphi}\|_{0,\O}\right.\\
\left.+\|\bdiv(\widehat{\boldsymbol{\rho}}-\widehat{\boldsymbol{\rho}}_h)\|_{0,\O}\|\boldsymbol{\psi}-\mathcal{P}_{k}^h(\boldsymbol{\psi})\|_{0,\O}\right.\\
\left. +2\sum_{E\in\CT_h}(\|\widehat{\boldsymbol{\rho}}-\widehat{\boldsymbol{\rho}}_h\|_{0,E}+\|\widehat{\boldsymbol{\rho}}-\Pi_h^E\widehat{\boldsymbol{\rho}}\|_{0,E})(\|\boldsymbol{\varphi}-\bcI_k^h\boldsymbol{\varphi}\|_{0,E}+\|\boldsymbol{\varphi}-\Pi_h^E\boldsymbol{\varphi}\|_{0,E})\right) \\
\leq Ch^{2s}\|\widehat{\bu}-\widehat{\bu}_h\|_{0,\O}.
\end{multline*}
This concludes the proof.
\end{proof}

\subsection{Error estimates}
For completeness, we recall the construction of spectral projection operators. Suppose $\eta$ is a nonzero isolated eigenvalue of $\mathbf{T}$ of algebraic multiplicity $m$, and let $\Gamma$ be a sufficiently small disk in $\mathbb{C}$ centered at $\eta$, enclosing no other spectral value of $\mathbf{T}$, with $\partial\Gamma \cap \text{sp}(\mathbf{T}) = \emptyset$. The corresponding spectral projector $\mE$ onto the generalized eigenspace $R(\mE)$ of $\mathbf{T}$ associated with $\eta$ is then defined as
 %We first recall the definition of spectral projectors. Let $\eta$ be a nonzero isolated eigenvalue of $\bT$ with algebraic multiplicity $m$ and let $\Gamma$
%be a disk of the complex plane centered in $\eta$, such that $\eta$ is the only eigenvalue of $\bT$ lying in $\Gamma$ and $\partial\Gamma\cap\sp(\bT)=\emptyset$. With these considerations at hand, we define the spectral projection of \FL{$\mE$ onto the generalized eigenspace $R(\mE)$} associated to $\bT$ \FL{and $\eta$ }as follows:
 $$\displaystyle \mE:=\frac{1}{2\pi i}\int_{\partial\Gamma} (z\boldsymbol{I}-\bT)^{-1}\,dz,$$
%\item The spectral projector of $\bT^*$ associated to $\bar{\mu}$ is $\displaystyle \boldsymbol{E}^*:=\frac{1}{2\pi i}\int_{\partial\Gamma} (z\boldsymbol{I}-\bT^*)^{-1}\,dz,$
where $\boldsymbol{I}$ represents the identity operator. On the other hand, for the discrete version of $\mE$ we need to take into consideration that 
Theorem \ref{thm:erroroperator} implies the existence of  $m$ eigenvalues, which lie in $\Gamma$, namely $\eta_{h}^{(1)},\ldots,\eta_{h}^{(m)}$, repeated according their respective multiplicities, that converge to $\eta$ as $h$ goes to zero. Hence, the projection onto the discrete invariant subspace $R(\mE_h)$ of $\bT$, spanned by the generalized eigenvector of $\bT_h$ corresponding to 
 $\eta_{h}^{(1)},\ldots,\eta_{h}^{(m)}$ is defined by 
\begin{equation*}
\mE_h:=\frac{1}{2\pi i}\int_{\partial\Gamma} (z\boldsymbol{I}-\bT_h)^{-1}\,dz.
\end{equation*}

To obtain the error estimate for the eigenfunctions, we need to  recall the definition of the \textit{gap} $\hdel(\cdot,\cdot)$ between two closed
subspaces $\mathfrak{X}$ and $\mathfrak{Y}$ of $\boldsymbol{\L}^2(\O)$:
$$
\hdel(\mathfrak{X},\mathfrak{Y})
:=\max\big\{\delta(\mathfrak{X},\mathfrak{Y}),\delta(\mathfrak{Y},\mathfrak{X})\big\}, \text{ where } \delta(\mathfrak{X},\mathfrak{Y})
:=\sup_{\underset{\left\|\boldsymbol{x}\right\|_{0,\O}=1}{\boldsymbol{x}\in\mathfrak{X}}}
\left(\inf_{\boldsymbol{y}\in\mathfrak{Y}}\left\|\boldsymbol{x}-\boldsymbol{y}\right\|_{0,\O}\right),
$$
since the desire error estimate follows from the approximation between the continuous and discrete generalized  eigenspaces. Hence, the following result from \cite[Theorems 14.9 and 14.11]{MR2652780} holds for our case.%cite[Theorems 7.1 and 7.3]{MR1115235}.
\begin{theorem}
\label{thm:errors1}
The following estimates hold
\begin{equation*}
\hdel(R(\mE),R(\mE_h))\leq C  h^{\min\{r,k+1\}}\quad\text{and}\quad
|\eta-\eta_{h}^{(i)}|\leq C h^{\min\{r,k+1\}}.\qquad i=1,...,m,
\end{equation*}
where the regularity parameter $r>0$ is the same as in Lemma \ref{lmm:add_eigen}, $k\geq 0$ is the polynomial degree of approximation of the discrete spaces,  and the constant $C>0$ is independent of $\lambda$ and $h$.

%where the hidden constants are independent of $h$.
\end{theorem}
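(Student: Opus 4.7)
The plan is to apply the classical spectral approximation machinery for compact self-adjoint operators to $\bT$ and $\bT_h$, and to sharpen the bound of Theorem \ref{thm:erroroperator} by exploiting the higher regularity available on the eigenspace.

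First, I would invoke the abstract framework of \cite{MR0203473,MR2652780}. Theorem \ref{thm:erroroperator} provides $\bT_h\to\bT$ in operator norm, and Theorem \ref{thm:spurious_free} excludes spurious modes. Hence, for $h$ sufficiently small exactly $m$ discrete eigenvalues $\eta_{h,1},\dots,\eta_{h,m}$ (counted with multiplicity) lie inside the contour $\partial\G$, and the standard abstract spectral estimates read
\begin{equation*}
\hdel(R(\boldsymbol{E}),R(\boldsymbol{E}_h))\le C\gamma_h,\qquad |\eta-\eta_{h,i}|\le C\gamma_h,\quad i=1,\dots,m,
\end{equation*}
where $\gamma_h:=\sup_{\bu\in R(\boldsymbol{E}),\ \|\bu\|_{0,\O}=1}\|(\bT-\bT_h)\bu\|_{0,\O}$ is the operator norm of the error restricted to the finite-dimensional invariant subspace.

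The second and decisive step is to estimate $\gamma_h$ optimally. Any $\bu\in R(\boldsymbol{E})$ is a finite combination of eigenfunctions of $\bT$ associated with $\eta$, so by the second item of Lemma \ref{lmm:add_eigen} it enjoys the enhanced regularity $\bu\in\H^{1+r}(\O)^2$ for every $r>0$, together with $\boldsymbol{\mathcal{S}}\bu\in\mathbb{H}^r(\O)$ and $\bdiv\boldsymbol{\mathcal{S}}\bu\in\H^{1+r}(\O)^2$. Rerunning the chain of arguments used in the proof of Theorem \ref{thm:erroroperator} with data $\bF=\bu$ of this higher Sobolev regularity — splitting $\widehat{\bu}-\widehat{\bu}_h$ via the $\L^2$-projection $\mathcal{P}_k^h$ and the discrete inf-sup condition \eqref{eq_inf-sup}, using the commuting diagram so that $\bcI_k^h\boldsymbol{\mathcal{S}}\bu-\boldsymbol{\mathcal{S}}_h\bu\in\mathbb{V}_h$, and invoking the approximation bounds for $\Pi_h^E$, $\bcI_k^h$ and $\mathcal{P}_k^h$ — one obtains
\begin{equation*}
\|(\bT-\bT_h)\bu\|_{0,\O}\le C\,h^{\min\{r,k+1\}}\|\bu\|_{0,\O},
\end{equation*}
so that $\gamma_h\le Ch^{\min\{r,k+1\}}$. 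Inserting this bound in the two abstract estimates above yields both inequalities claimed in the theorem.

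The main technical obstacle is precisely this second step: every approximation estimate used in the proof of Theorem \ref{thm:erroroperator} must be revisited, and the crude rate $h^s$ coming from the low regularity of generic loads has to be systematically replaced by the optimal polynomial rate $h^{\min\{r,k+1\}}$ produced by the enhanced smoothness of the eigenpair. Concretely, one has to exploit the approximation estimate of $\bcI_k^h$ in the regime $t>1/2$, the commuting-diagram bound for $\bdiv(\boldsymbol{\mathcal{S}}\bu-\bcI_k^h\boldsymbol{\mathcal{S}}\bu)$, and the elementwise $\Pi_h^E$ bound on $\boldsymbol{\mathcal{S}}\bu$ all at the full Sobolev index available from Lemma \ref{lmm:add_eigen}. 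Independence of the constants from $\lambda$ then follows from the standing Assumption on the regularity constants, together with the $\lambda$-independent coercivity of $a_h(\cdot,\cdot)$ on $\mathbb{V}_h$ and the $\lambda$-independent inf-sup constant $\widetilde\beta$ established in Section \ref{sec:spec_app}.
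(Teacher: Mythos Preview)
Your proposal is correct and follows the same route as the paper: the paper does not give a detailed proof but simply invokes \cite[Theorems 13.8 and 13.10]{MR2652780}, which is precisely the abstract spectral framework you describe, with the key quantity $\gamma_h=\|(\bT-\bT_h)|_{R(\boldsymbol{E})}\|$ bounded via the enhanced eigenfunction regularity of Lemma~\ref{lmm:add_eigen}. Your write-up in fact supplies more detail than the paper, explicitly rerunning the argument of Theorem~\ref{thm:erroroperator} with the sharper Sobolev index $r$ in place of $s$.
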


Although Theorem \ref{thm:errors1} yields a linear convergence rate for eigenvalue approximation, a sharper quadratic estimate can be derived. Indeed, the error bound established for $\eta$ of $\mathbf{T}$ transfers directly to an analogous bound for $\kappa = \frac{1}{\eta}$ in \eqref{def:spectral_1}, associated with eigenspace $\mathcal{E}$. Setting $\kappa_h^{(i)} = \frac{1}{\eta_h^{(i)}}$, $1 \leq i \leq m$, as the eigenvalues of \eqref{def:spectral_disc} corresponding to the invariant subspace $\mathcal{E}_h$, we obtain the following result.
%\FL{Particularly for the eigenvalues, Theorem \ref{thm:errors1} establishes a linear order of approximation. However, this order of convergence can be improved to a quadratic order.   With this aim in mind, first} we note that the error estimate for the eigenvalue \FL{$\eta$} of $\bT$ leads to an analogous estimate for the approximation of the eigenvalue $\kappa= \dfrac{1}{\eta}$ of \eqref{def:spectral_1} with eigenspace $\mE$. Let  $\kappa_{h}^{(i)}=\dfrac{1}{\eta_{h}^{(i)}}$, $1\leq  i\leq m$ be the eigenvalues of \eqref{def:spectral_disc}  with invariant subspace $\mE_{h}$. Therefore we have the following result.
\begin{theorem}
\label{thm:double:order}
There exist positive constants $C$, independent of $\lambda$ and $h$, and $h_0$, such that for $h<h_0$
\begin{equation*}
|\kappa-\kappa_{h}^{(i)}|\leq Ch^{2\min\{r,k+1\}},\qquad i=1,\ldots, m.
\end{equation*}
\end{theorem}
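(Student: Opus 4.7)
The plan is to follow the classical Babu\v{s}ka--Osborn strategy for symmetric mixed eigenvalue problems, adapted to the VEM setting where a consistency error appears from the variational crime $a\neq a_h$. For each $i\in\{1,\ldots,m\}$, I would pair the discrete eigenfunction $(\boldsymbol{\rho}_{h,i},\bu_{h,i})$ with a continuous eigenfunction $(\boldsymbol{\rho}^{(i)},\bu^{(i)})\in\mE$ realizing (up to a harmless factor) the gap $\widehat{\delta}(R(\boldsymbol{E}),R(\boldsymbol{E}_h))$ from Theorem~\ref{thm:errors1}, and normalize so that $c(\bu^{(i)},\bu^{(i)})=c(\bu_{h,i},\bu_{h,i})=1$. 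Testing the first equation in \eqref{def:spectral_2} with $\btau=\boldsymbol{\rho}^{(i)}$ and the second with $\bv=\bu^{(i)}$ yields the Rayleigh-type identity $\kappa=a(\boldsymbol{\rho}^{(i)},\boldsymbol{\rho}^{(i)})$, and analogously $\kappa_{h,i}=a_h(\boldsymbol{\rho}_{h,i},\boldsymbol{\rho}_{h,i})$.

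The core of the proof is an algebraic identity. Using bilinearity and symmetry of $a$,
\begin{equation*}
a(\boldsymbol{\rho}^{(i)},\boldsymbol{\rho}^{(i)})-a(\boldsymbol{\rho}_{h,i},\boldsymbol{\rho}_{h,i})=2a(\boldsymbol{\rho}^{(i)},\boldsymbol{\rho}^{(i)}-\boldsymbol{\rho}_{h,i})-a(\boldsymbol{\rho}^{(i)}-\boldsymbol{\rho}_{h,i},\boldsymbol{\rho}^{(i)}-\boldsymbol{\rho}_{h,i}),
\end{equation*}
and applying the continuous first equation with the admissible test $\boldsymbol{\rho}^{(i)}-\boldsymbol{\rho}_{h,i}\in\mathbb{H}_0$ rewrites the first term as $-2b(\boldsymbol{\rho}^{(i)}-\boldsymbol{\rho}_{h,i},\bu^{(i)})$. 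The key observation is that $\bdiv\boldsymbol{\rho}_{h,i}|_E\in\textbf{\textsc{P}}_k(E)$ belongs to $\mathbf{Q}_h$, so $L^2$-orthogonality of $\mathcal{P}_k^h$ together with the second discrete equation yields $b(\boldsymbol{\rho}_{h,i},\bu^{(i)})=b(\boldsymbol{\rho}_{h,i},\mathcal{P}_k^h\bu^{(i)})=-\kappa_{h,i}\,c(\bu_{h,i},\bu^{(i)})$, whereas $b(\boldsymbol{\rho}^{(i)},\bu^{(i)})=-\kappa$ on the continuous side. Combining these with the normalization identity $c(\bu^{(i)},\bu_{h,i})=1-\tfrac{1}{2}\|\bu^{(i)}-\bu_{h,i}\|_{0,\O}^2$ and rearranging produces
\begin{equation*}
\kappa-\kappa_{h,i}=a(\boldsymbol{\rho}^{(i)}-\boldsymbol{\rho}_{h,i},\boldsymbol{\rho}^{(i)}-\boldsymbol{\rho}_{h,i})-\kappa_{h,i}\|\bu^{(i)}-\bu_{h,i}\|_{0,\O}^2-\bigl[a(\boldsymbol{\rho}_{h,i},\boldsymbol{\rho}_{h,i})-a_h(\boldsymbol{\rho}_{h,i},\boldsymbol{\rho}_{h,i})\bigr],
\end{equation*}
whose right-hand side is manifestly quadratic in the error.

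It then remains to bound each of the three terms by $h^{2\min\{r,k+1\}}$. The first is controlled by continuity of $a$ and the $\bdiv$-norm bound on $\boldsymbol{\rho}^{(i)}-\boldsymbol{\rho}_{h,i}$ that accompanies Theorem~\ref{thm:errors1}; the second is absorbed (even by the sharper $h^{4s}$ coming from the duality argument of Lemma~\ref{lmm:L2order}). The main obstacle is the VEM consistency residual $a(\boldsymbol{\rho}_{h,i},\boldsymbol{\rho}_{h,i})-a_h(\boldsymbol{\rho}_{h,i},\boldsymbol{\rho}_{h,i})$. I would handle it element by element by inserting $\Pi_h^E\boldsymbol{\rho}^{(i)}\in\mathbb{P}_k(E)$ in both arguments and invoking the $\mathbb{P}_k$-consistency of $a_h^E$ from Lemma~\ref{lmm:stab}: the purely polynomial and cross contributions cancel by symmetry, leaving
\begin{equation*}
a^E(\boldsymbol{\rho}_{h,i},\boldsymbol{\rho}_{h,i})-a_h^E(\boldsymbol{\rho}_{h,i},\boldsymbol{\rho}_{h,i})=(a^E-a_h^E)(\boldsymbol{\rho}_{h,i}-\Pi_h^E\boldsymbol{\rho}^{(i)},\boldsymbol{\rho}_{h,i}-\Pi_h^E\boldsymbol{\rho}^{(i)}).
\end{equation*}
Continuity of $a^E$ and $a_h^E$ bounds this by $C\|\boldsymbol{\rho}_{h,i}-\Pi_h^E\boldsymbol{\rho}^{(i)}\|_{0,E}^2\leq C\bigl(\|\boldsymbol{\rho}^{(i)}-\boldsymbol{\rho}_{h,i}\|_{0,E}^2+\|\boldsymbol{\rho}^{(i)}-\Pi_h^E\boldsymbol{\rho}^{(i)}\|_{0,E}^2\bigr)$; summing over $E$ and invoking Theorem~\ref{thm:errors1} together with property~(A.3) delivers the required $h^{2\min\{r,k+1\}}$ rate. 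The hard part is precisely this last step: the trick of subtracting the polynomial projection on \emph{both} slots turns an \emph{a priori} first-order consistency residual into a quadratic quantity in the error, which is the mechanism that produces the double order in the VEM spectral setting.
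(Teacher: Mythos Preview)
Your approach is correct and essentially the same as the paper's. The paper establishes the identity
\begin{equation*}
(\kappa-\kappa_{h,i})\,c(\bu_h,\bu_h)=A\bigl((\boldsymbol{\rho}-\boldsymbol{\rho}_h,\bu-\bu_h),(\boldsymbol{\rho}-\boldsymbol{\rho}_h,\bu-\bu_h)\bigr)+\kappa\,c(\bu-\bu_h,\bu-\bu_h)+\sum_{E}\bigl[a_h^E-a^E\bigr](\boldsymbol{\rho}_h,\boldsymbol{\rho}_h),
\end{equation*}
which differs from yours only in packaging: the paper keeps the cross term $2b(\boldsymbol{\rho}-\boldsymbol{\rho}_h,\bu-\bu_h)$ inside the $A$-quadratic form and has $\kappa$ (rather than $\kappa_{h,i}$) multiplying the displacement error, whereas you eliminate $b$ via the Rayleigh identities $\kappa=a(\boldsymbol{\rho},\boldsymbol{\rho})$ and the projection trick $b(\boldsymbol{\rho}_{h,i},\bu^{(i)})=b(\boldsymbol{\rho}_{h,i},\mathcal{P}_k^h\bu^{(i)})$. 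Both identities are quadratic in the error plus the same VEM consistency residual, and your treatment of that residual---subtracting $\Pi_h^E\boldsymbol{\rho}$ in both slots to exploit $\mathbb{P}_k$-consistency---is exactly what the paper does. One minor remark: to bound $a(\boldsymbol{\rho}^{(i)}-\boldsymbol{\rho}_{h,i},\boldsymbol{\rho}^{(i)}-\boldsymbol{\rho}_{h,i})$ you only need the $\mathbb{L}^2$-error of the pseudostress, not the full $\bdiv$-norm; the paper is equally informal here and simply invokes $\|\boldsymbol{\rho}-\boldsymbol{\rho}_h\|_{\bdiv,\O}\leq Ch^{\min\{r,k+1\}}$ without an explicit statement, so your appeal to what ``accompanies'' Theorem~\ref{thm:errors1} is in the same spirit.
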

\begin{proof}
Let $\bu_h\in\mE_h$ be an eigenfunction associated to the eigenvalue $\kappa_h^{(i)}$, with $i=1,\ldots,m$, such that $c_h(\bu_h,\bu_h)=\|\bu_h\|_{0,\O}^2=1$. Let us consider  the following eigenvalue problem
\begin{equation*}
A((\boldsymbol{\rho},\bu),(\boldsymbol{\rho},\bu))=-\kappa c(\bu,\bu)\quad\forall (\boldsymbol{\rho},\bu)\in\boldsymbol{\mathcal{H}}_0\times\mathbf{Q},
\end{equation*}
and its VEM discretization given as the following problem
\begin{equation*}
A_h((\boldsymbol{\rho}_h,\bu_h),(\boldsymbol{\rho}_h,\bu_h))=-\kappa_{h}^{(i)} c_h(\bu_h,\bu_h)\quad\forall (\boldsymbol{\rho}_h,\bu_h)\in\boldsymbol{\mathcal{H}}_{0,h}\times\mathbf{Q}_h.
\end{equation*}
These eigenvalue problems lead to the following well known algebraic identity
\begin{multline}
\label{eq:padra}
(\kappa-\kappa_h^{(i)})c(\bu_h,\bu_h)=\underbrace{A((\boldsymbol{\rho}_h-\boldsymbol{\rho}_h,\bu-\bu_h),(\boldsymbol{\rho}_h-\boldsymbol{\rho}_h,\bu-\bu_h))}_{\Lambda_1}\\
+\underbrace{\kappa c(\bu-\bu_h,\bu-\bu_h)}_{\Lambda_2}
+\underbrace{\sum_{E\in\CT_h} a_h^E(\boldsymbol{\rho}_h,\boldsymbol{\rho}_h)- a^E(\boldsymbol{\rho}_h,\boldsymbol{\rho}_h)}_{\Lambda_3},
\end{multline}
where each of the contributions on  the right-hand side of \eqref{eq:padra} must be estimated.  We begin with $\mathrm{T}_1$. From the definition of $A(\cdot,\cdot)$, triangle inequality, Cauchy-Schwarz inequality, and the regularity of the eigenfunctions on $\mE$, we have
\begin{multline}
\label{eq:T1_control}
|\Lambda_1|\leq |a(\boldsymbol{\rho}-\boldsymbol{\rho}_h, \boldsymbol{\rho}-\boldsymbol{\rho}_h)|+2|b(\boldsymbol{\rho}-\boldsymbol{\rho}_h,\bu-\bu_h)|\\
\leq\frac{1}{\mu}\|(\boldsymbol{\rho}-\boldsymbol{\rho}_h)^{\texttt{d}}\|_{0,\O}^2+\dfrac{1}{4\lambda+6\mu}\|\tr(\boldsymbol{\rho}-\boldsymbol{\rho}_h)\|_{0,\O}^2+2\|\bdiv(\boldsymbol{\rho}-\boldsymbol{\rho}_h)\|_{0,\O}\|\bu-\bu_h\|_{0,\O}\\
\leq\frac{3(2+\sqrt{2})^2+2}{12\mu}\|(\boldsymbol{\rho}-\boldsymbol{\rho}_h)\|_{0,\O}^2+\|\bdiv(\boldsymbol{\rho}-\boldsymbol{\rho}_h)\|_{0,\O}^2+\|\bu-\bu_h\|_{0,\O}^2\\
\leq\max\left\{\frac{3(2+\sqrt{2})^2+2}{12\mu},1\right\}(\|\boldsymbol{\rho}-\boldsymbol{\rho}_h\|_{\bdiv,\O}^2+\|\bu-\bu_h\|_{0,\O}^2)\\
\leq C\max\left\{\frac{3(2+\sqrt{2})^2+2}{12\mu},1\right\}h^{2\min\{r,k+1\}}.
\end{multline}
The control of $\Lambda_2$ is straightforward: 
\begin{equation}
\label{eq:T2_control}
\Lambda_2\leq \|\bu-\bu_h\|_{0,\O}^2\leq C h^{2\min\{r,k+1\}}.
\end{equation}
%For $\Lambda_3$, we invoke the orthogonality properties of $\Pi_0^E$ to obtain
%\begin{multline}
%\label{eq:T3_control}
%\Lambda_3=\sum_{E\in\CT_h}c^E(\bu_h,\bu_h)-c_h^E(\bu_h,\bu_h)=\sum_{E\in\CT_h}\int_E(\bu_h-\Pi_0^E\bu_h)(\bu_h-\Pi_0^E\bu_h)\\
%=\sum_{E\in\CT_h}\|\bu_h-\Pi_0^E\bu_h\|_{E,\O}^2\leq Ch^{2\min\{r,\GR{k+1}\}}.
%\end{multline}
Now, for $\Lambda_3$ we invoke the consistency in order to  obtain the following estimate
\begin{multline}
\label{eq:T4_control}
|\Lambda_3|=\left|\sum_{E\in\CT_h} a_h^E(\boldsymbol{\rho}_h,\boldsymbol{\rho}_h)- a^E(\boldsymbol{\rho}_h,\boldsymbol{\rho}_h)\right|\\
=\left|\sum_{E\in\CT_h} a_h^E(\boldsymbol{\rho}_h-\Pi_h^E\boldsymbol{\rho},\boldsymbol{\rho}_h)- a^E(\boldsymbol{\rho}_h-\Pi_h^E\boldsymbol{\rho},\boldsymbol{\rho}_h)\right|\\
=\left|\sum_{E\in\CT_h} a_h^E(\boldsymbol{\rho}_h-\Pi_h^E\boldsymbol{\rho},\boldsymbol{\rho}_h-\Pi_h^E\boldsymbol{\rho})- a^E(\boldsymbol{\rho}_h-\Pi_h^E\boldsymbol{\rho},\boldsymbol{\rho}_h-\Pi_h^E\boldsymbol{\rho})\right|\\
\leq\sum_{E\in\CT_h} \|\boldsymbol{\rho}_h-\Pi_h^E\boldsymbol{\rho}\|_{0,E}^2
\leq\sum_{E\in\CT_h} \|\boldsymbol{\rho}_h-\boldsymbol{\rho}\|_{0,E}^2+\|\Pi_h^E\boldsymbol{\rho}-\boldsymbol{\rho}\|_{0,E}^2\leq Ch^{2\min\{r,k+1\}}.
\end{multline}

Hence, gathering \eqref{eq:T1_control}, \eqref{eq:T2_control} and \eqref{eq:T4_control}, and replacing all these estimates in \eqref{eq:padra}, we conclude the proof.
\end{proof}

\section{Numerical experiments}
\label{sec:numerics}
Given all the previous theoretical results, now the task is to validate the proposed mixed  numerical method. In this we report  a number of  numerical experiments to analyze the performance of the numerical scheme on different geometries and physical configurations.
The tests that we present were performed with a MATLAB code, using different polygonal meshes like the ones presented in Figure \ref{FIG:Meshes}. Let us precise that  the eigenvalue problem that we solve is of the form $\mathbf{A}\mathbf{x}_h=\kappa_h\mathbf{B}\mathbf{x}_h$, where the discrete eigenvalues $\kappa_h$ are obtained with the command \texttt{eigs}. The plots of the different eigenfunctions that we present were also obtained with the visual tools of Matlab.

\begin{figure}[H]
	\begin{center}
		\begin{minipage}{13cm}
			\centering\includegraphics[height=3.8cm, width=3.8cm]{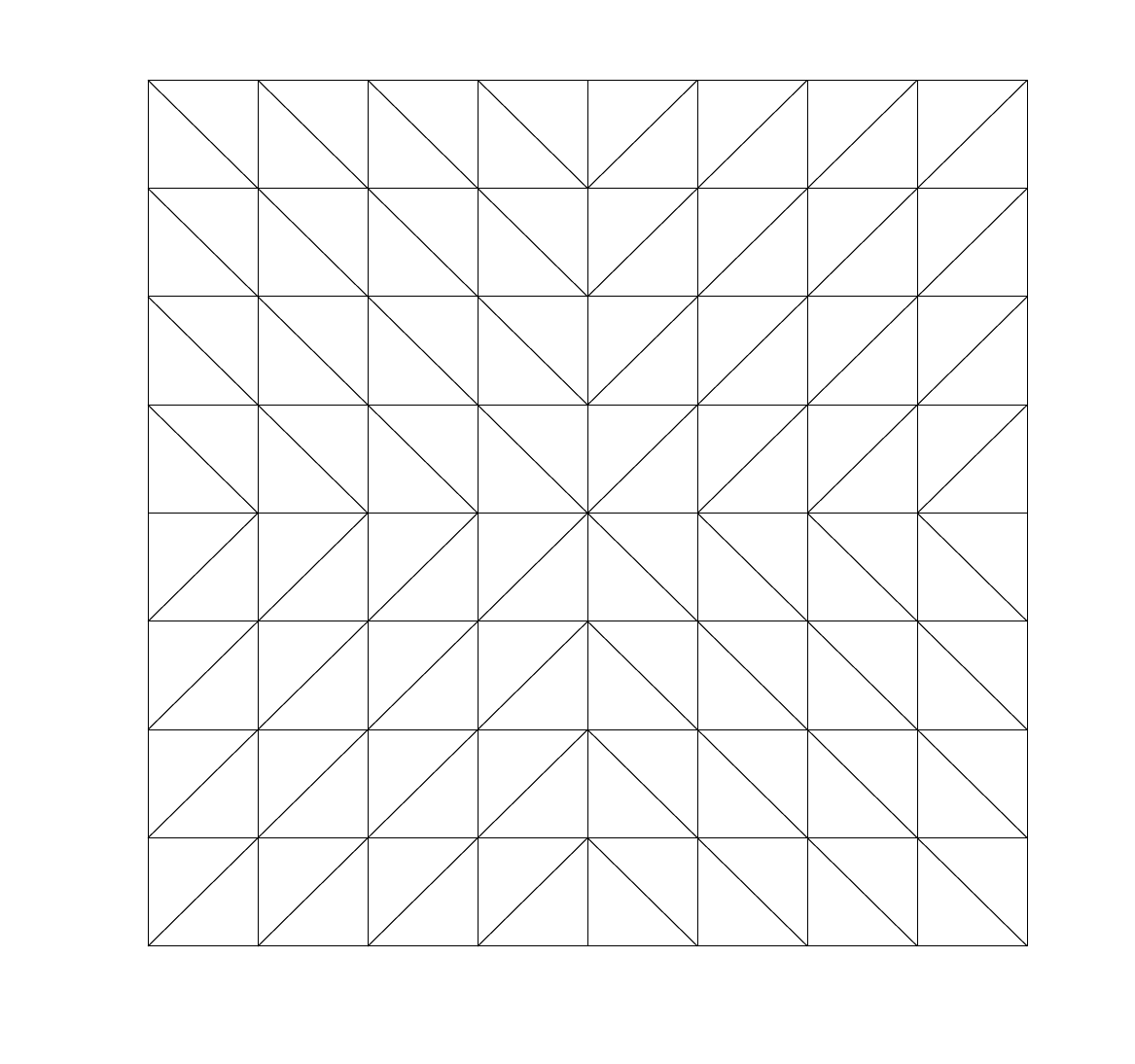}
			\centering\includegraphics[height=3.8cm, width=3.8cm]{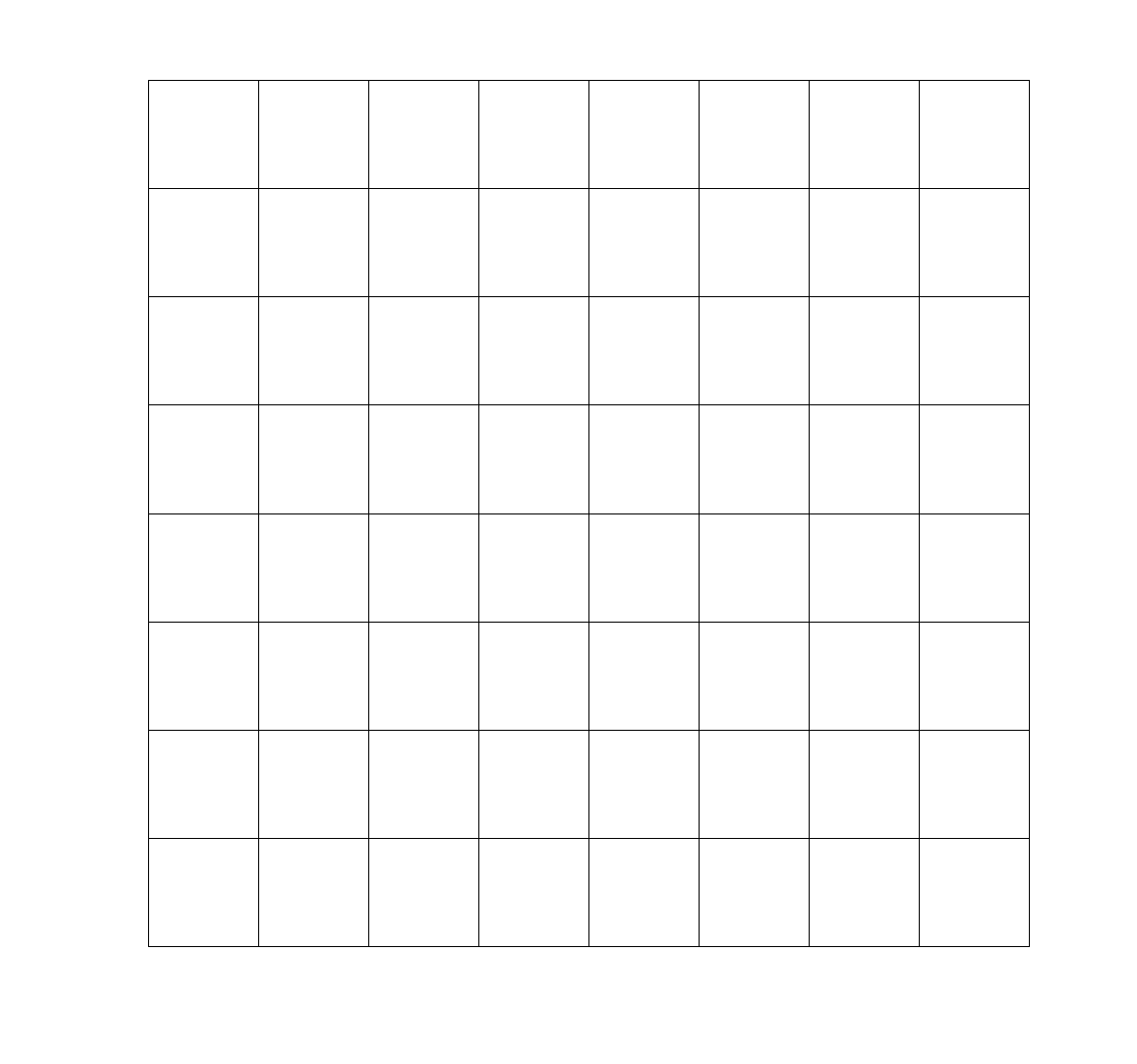}
			\centering\includegraphics[height=3.8cm, width=3.8cm]{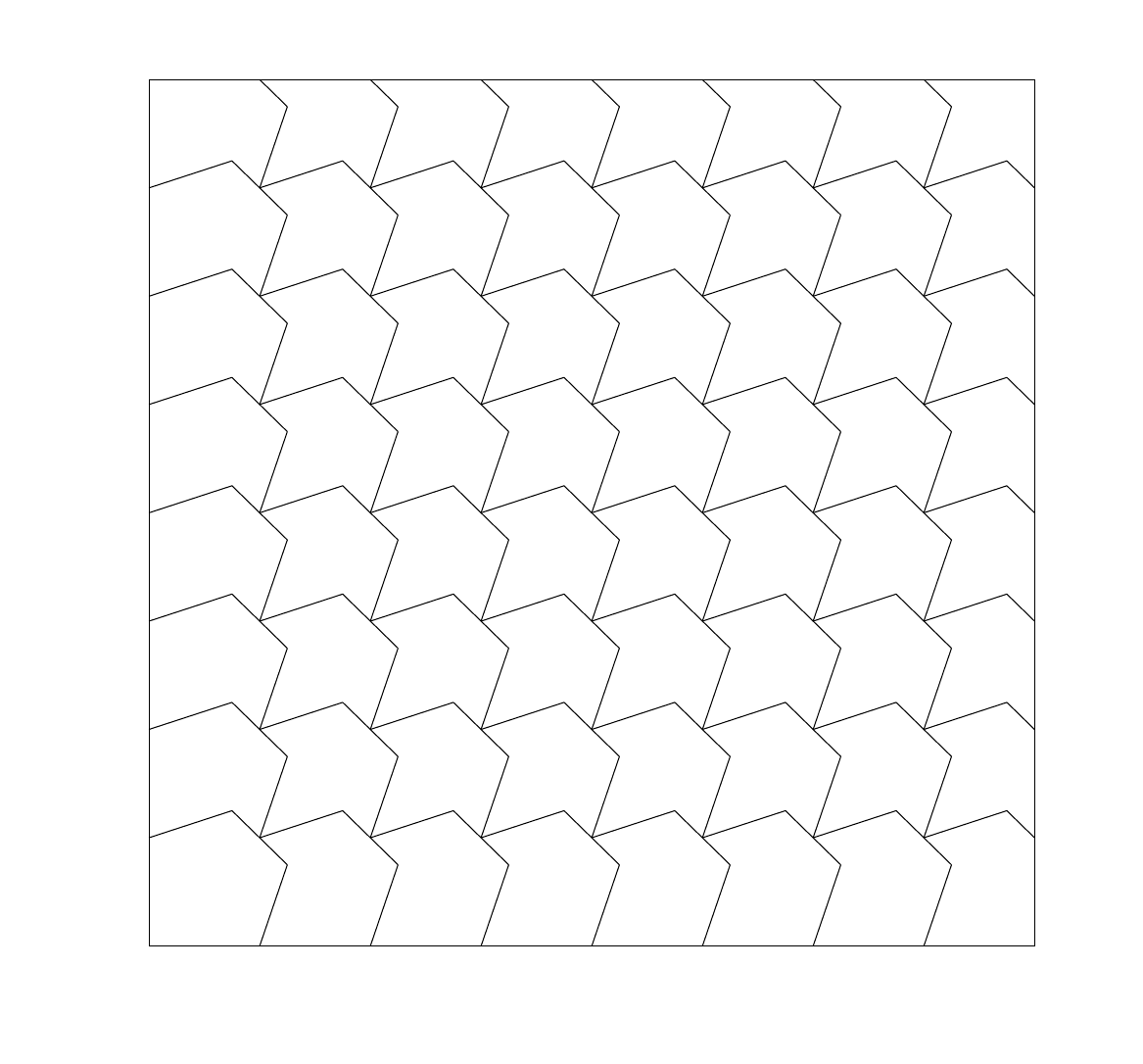}
         \end{minipage}
         	\begin{minipage}{13cm}
			\centering\includegraphics[height=3.8cm, width=3.8cm]{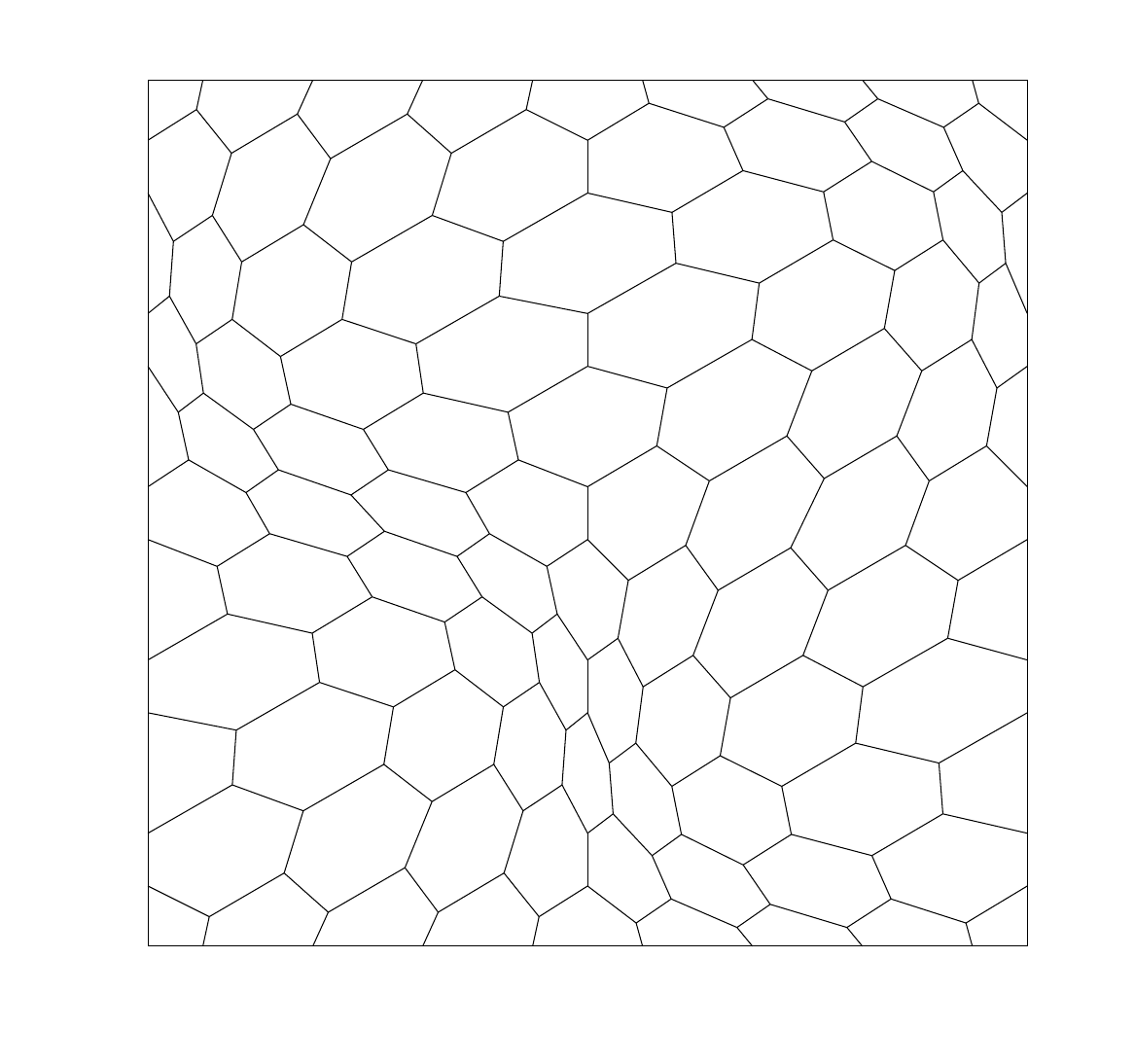}
			\centering\includegraphics[height=3.8cm, width=3.8cm]{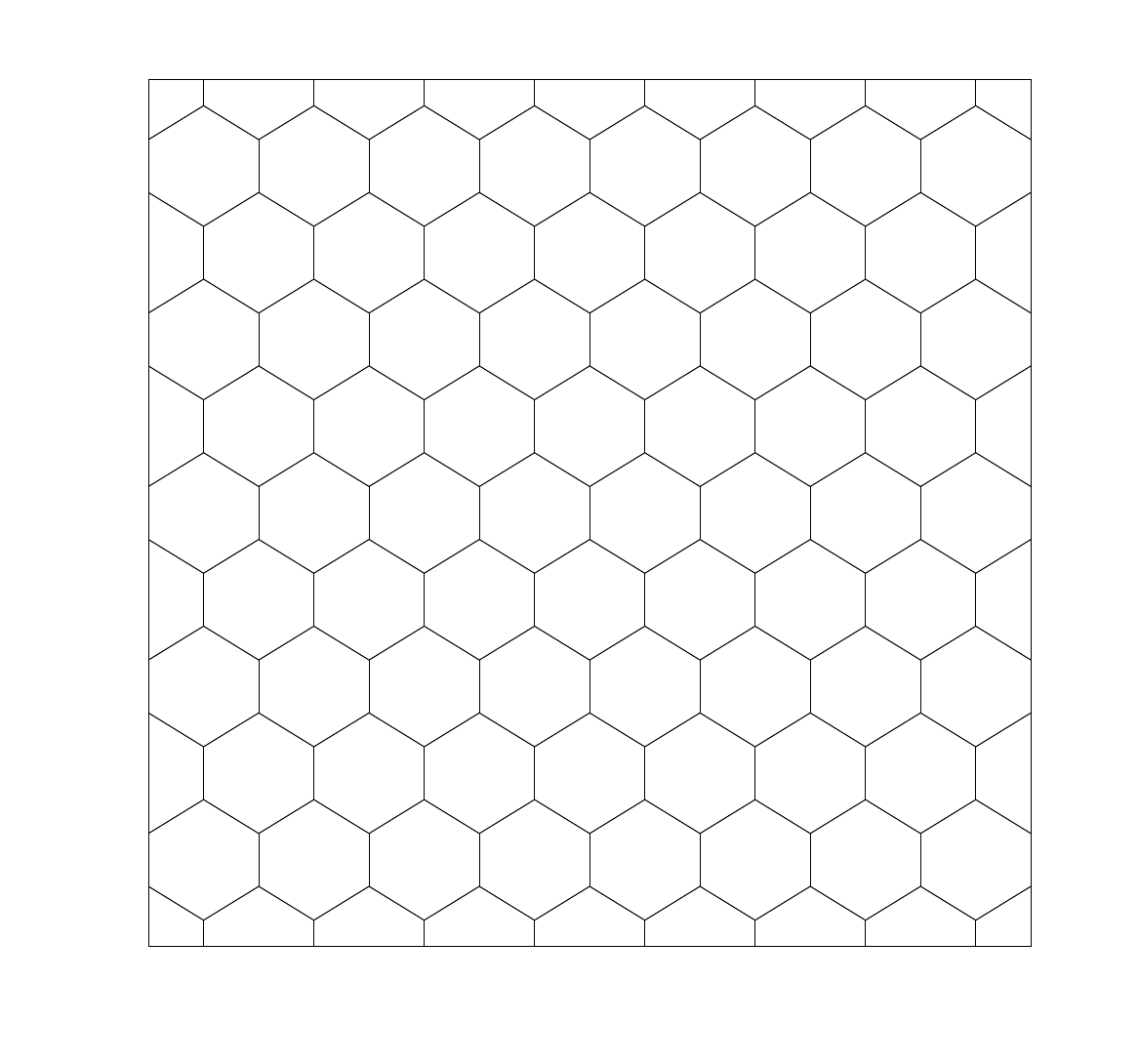}
			\centering\includegraphics[height=3.8cm, width=3.8cm]{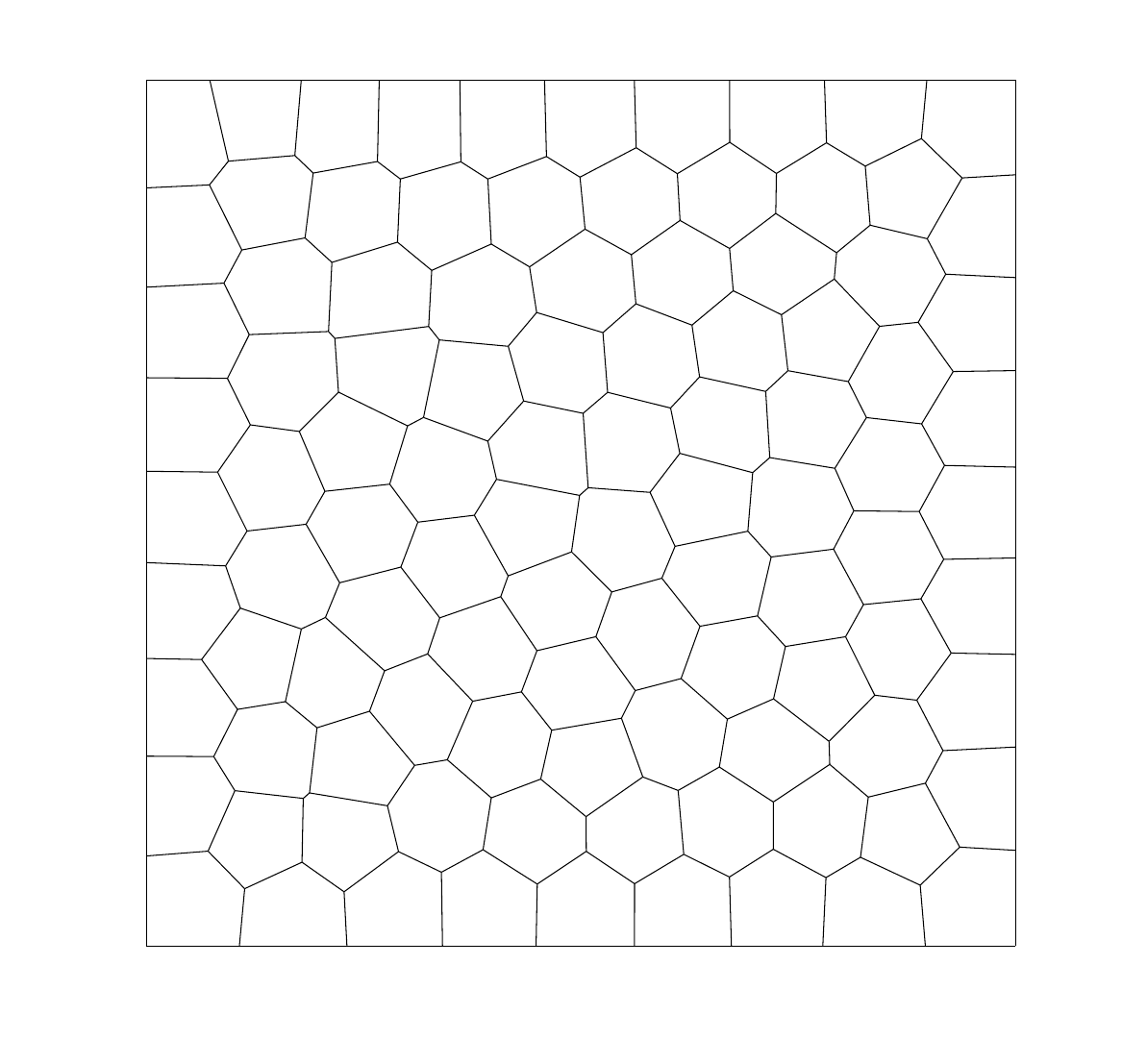}
			         \end{minipage}

			\caption{ Sample meshes: $\CT_{h}^{1}$ (top left), $\CT_{h}^{2}$ (top middle), $\CT_{h}^{3}$ (top right),$\CT_{h}^{4}$ (bottom left), $\CT_{h}^{5}$ (bottom middle) and  $\CT_{h}^{6}$ (bottom right).}
		\label{FIG:Meshes}
	\end{center}
\end{figure}

For the numerical scheme, we have implemented the lowest order of approximation $k=0$. Let us define  $\omega_h:=\sqrt{\kappa_h}$ as the frequency computed with our numerical method. The order of convergence for the frequencies  that we present in the forthcoming tables were obtained with a least-square fitting of the form 
\begin{equation}
\label{eq:fitting}
\omega_h\approx\omega_{\text{extr}}+Ch^{\alpha},
\end{equation}
where $\omega_{\text{extr}}:=\sqrt{\kappa_{\text{extr}}}$ is an extrapolated frequency given by the fitting and $\alpha$ represents the order of convergence. We remark that the constant  $C$ of \eqref{eq:fitting} is independent of the mesh-size.

In the tests, we focus on two specific aspects: on the one hand, the computation of the spectrum and the computational orders of approximation for the eigenvalues, and on the other hand, the assessment of the performance of the method with respect to the Poisson ratio $\nu$. More precisely, recalling the definition of the Lam\'e constants
\begin{equation*}
\lambda:=\frac{E\nu}{(1+\nu)(1-2\nu)}\quad\text{and}\quad\mu:=\frac{E}{2(1+\nu)},
\end{equation*}
clearly when $\nu\rightarrow 1/2$, the constant $\lambda$ blows up. Hence, we aim to observe that the mixed scheme is locking-free with respect to $\nu$. In fact, when $\nu=1/2$, which corresponds to the perfect incompressible case, the bilinear form $a(\cdot,\cdot)$ must be modified in the sense that
\begin{equation}
\label{eq:bilinear_limit}
a(\boldsymbol{\rho},\btau):=\frac{1}{\mu}\int_{\O}\boldsymbol{\rho}^{\texttt{d}}:\btau^{\texttt{d}}\quad\forall\boldsymbol{\rho},\btau\in\boldsymbol{\mathcal{H}}_0,
\end{equation} 
and hence, the computational code must be coherent with this modification.

Regarding the VEM scheme, as in \cite{MR2997471}, a choice for $S^{E}(\cdot,\cdot)$ is given by
\begin{equation*}
S^{E}(\boldsymbol{\rho}_{h},\btau_{h}):=\gamma_E\sum_{k=1}^{N_{E}^{k}}m_{i,E}(\boldsymbol{\rho}_{h})m_{i,E}(\btau_{h}),
\end{equation*}
where the set $\left\{m_{i,E}(\boldsymbol{\rho}_{h})\right\}_{i=1}^{N_{E}^{k}}$ corresponds to all the $E$-moments of $\boldsymbol{\rho}_{h}$ associated with degrees of freedom \eqref{eq:dof_normal}--\eqref{eq:dof_rot} and $\gamma_E$ denotes the stability constant, which is assumed to be of order one. Finally,  the refinement parameter $N$ used to label each mesh is the number of elements on each edge.
%%%%%%%%%%%%%%%%%%%%%%%%%%%%%%%%%%%%%%%%%%%%%%%%%%%%%%%%%%%%%
%%%%%%%%%%%%%%%%%%%%%%%%%%%%%%%%%%%%%%%%%%%%%%%%%%%%%%%%%%%%%
%%%%%%%%%%%%%%%%%%%%%%%%%%%%%%%%%%%%%%%%%%%%%%%%%%%%%%%%%%%%%
%%%%%%%%%%%%%%%%%%%%%%%%%%%%%%%%%%%%%%%%%%%%%%%%%%%%%%%%%%%%%
%%%%%%%%%%%%%%%%%%%%%%%%%%%%%%%%%%%%%%%%%%%%%%%%%%%%%%%%%%%%%
%%%%%%%%%%%%%%%%%%%%%%%%%%%%%%%%%%%%%%%%%%%%%%%%%%%%%%%%%%%%%
%%%%%%%%%%%%%%%%%%%%%%%%%%%%%%%%%%%%%%%%%%%%%%%%%%%%%%%%%%%%%
%%%%%%%%%%%%%%%%%%%%%%%%%%%%%%%%%%%%%%%%%%%%%%%%%%%%%%%%%%%%%
\subsection{Test 1: The unit square}
We begin by testing our method in the unit square $\O:=(0,1)^2$. The boundary condition for this test is $\bu=\boldsymbol{0}$, that represents a clamped square. With this geometrical configuration, the eigenfunctions of the problems are sufficiently regular and hence, the order of convergence for the corresponding eigenvalues must be optimal. 

 We prove different meshes and different values of the Poisson ratio, particularly $\nu\in\{0.35, 0.49, 0.5\}$. Clearly $\nu=0.5$ is the limit case where $\lambda=\infty$ implying that bilinear form \eqref{eq:bilinear_limit} is the one that we consider for this case. In the following tables we report the obtained results.  The parameter $N$ is such that $N\sim h^{-1}$, $\alpha$ is the computed order obtained by  \eqref{eq:fitting} and the last column of each table presents the values obtained by the finite element method studied in \cite{MR4570534}.

\begin{table}[H]
\centering
\caption{Test 1. Computed lowest eigenvalues $\omega_{hi}$, $1\leq i\leq 4$, on different meshes.}
\label{TABLA:COMBINADA}

% --- FILA SUPERIOR: Mallas 1 y 2 ---
\begin{subtable}[t]{0.48\textwidth}
\centering
\caption{$\CT_h^1$ (Triangle meshes)}
\resizebox{\linewidth}{!}{%
\begin{tabular}{@{}c ccc ccc@{}}
\toprule
$\omega_{hi}$ & $N=16$ & $N=32$ & $N=64$ & $\alpha$ & Extrap. & \cite{MR4570534} \\
\midrule
\multicolumn{7}{c}{$\nu=0.35$} \\
\midrule
$\omega_{h1}$ & 4.1220 & 4.1747 & 4.1884 & 1.93 & 4.1933 & 4.1931 \\
$\omega_{h2}$ & 4.1220 & 4.1747 & 4.1884 & 1.93 & 4.1933 & 4.1931 \\
$\omega_{h3}$ & 4.3086 & 4.3559 & 4.3681 & 1.96 & 4.3723 & 4.3722 \\
$\omega_{h4}$ & 5.7435 & 5.8829 & 5.9203 & 1.90 & 5.9340 & 5.9332 \\
\midrule
\multicolumn{7}{c}{$\nu=0.49$} \\
\midrule
$\omega_{h1}$ & 4.1248 & 4.1722 & 4.1845 & 1.95 & 4.1887 & 4.1886 \\
$\omega_{h2}$ & 5.3520 & 5.4748 & 5.5068 & 1.94 & 5.5180 & 5.5176 \\
$\omega_{h3}$ & 5.3520 & 5.4748 & 5.5068 & 1.94 & 5.5180 & 5.5176 \\
$\omega_{h4}$ & 6.3010 & 6.4797 & 6.5272 & 1.91 & 6.5445 & 6.5434 \\
\midrule
\multicolumn{7}{c}{$\nu=0.5$} \\
\midrule
$\omega_{h1}$ & 4.1132 & 4.1607 & 4.1730 & 1.95 & 4.1773 & 4.1771 \\
$\omega_{h2}$ & 5.3776 & 5.4991 & 5.5308 & 1.94 & 5.5419 & 5.5415 \\
$\omega_{h3}$ & 5.3776 & 5.4991 & 5.5308 & 1.94 & 5.5419 & 5.5415 \\
$\omega_{h4}$ & 6.2945 & 6.4735 & 6.5212 & 1.91 & 6.5384 & 6.5373 \\
\bottomrule
\end{tabular}%
}
\end{subtable}%
\hfill
\begin{subtable}[t]{0.48\textwidth}
\centering
\caption{$\CT_h^2$ (Square meshes)}
\resizebox{\linewidth}{!}{%
\begin{tabular}{@{}c ccc ccc@{}}
\toprule
$\omega_{hi}$ & $N=16$ & $N=32$ & $N=64$ & $\alpha$ & Extrap. & \cite{MR4570534} \\
\midrule
\multicolumn{7}{c}{$\nu=0.35$} \\
\midrule
$\omega_{h1}$ & 4.0653 & 4.1589 & 4.1843 & 1.88 & 4.1938 & 4.1931 \\
$\omega_{h2}$ & 4.0653 & 4.1589 & 4.1843 & 1.88 & 4.1938 & 4.1931 \\
$\omega_{h3}$ & 4.2719 & 4.3461 & 4.3656 & 1.93 & 4.3725 & 4.3722 \\
$\omega_{h4}$ & 5.6369 & 5.8512 & 5.9118 & 1.82 & 5.9358 & 5.9332 \\
\midrule
\multicolumn{7}{c}{$\nu=0.49$} \\
\midrule
$\omega_{h1}$ & 4.0880 & 4.1621 & 4.1819 & 1.91 & 4.1890 & 4.1886 \\
$\omega_{h2}$ & 5.2116 & 5.4357 & 5.4967 & 1.88 & 5.5194 & 5.5176 \\
$\omega_{h3}$ & 5.2116 & 5.4357 & 5.4967 & 1.88 & 5.5194 & 5.5176 \\
$\omega_{h4}$ & 6.1470 & 6.4366 & 6.5160 & 1.87 & 6.5458 & 6.5434 \\
\midrule
\multicolumn{7}{c}{$\nu=0.5$} \\
\midrule
$\omega_{h1}$ & 4.0763 & 4.1506 & 4.1704 & 1.91 & 4.1775 & 4.1771 \\
$\omega_{h2}$ & 5.2405 & 5.4609 & 5.5209 & 1.88 & 5.5432 & 5.5415 \\
$\omega_{h3}$ & 5.2405 & 5.4609 & 5.5209 & 1.88 & 5.5432 & 5.5415 \\
$\omega_{h4}$ & 6.1437 & 6.4311 & 6.5101 & 1.86 & 6.5403 & 6.5373 \\
\bottomrule
\end{tabular}%
}
\end{subtable}

\vspace{1.2em} % Espacio entre la fila superior e inferior

% --- FILA INFERIOR: Mallas 3 y 4 ---
\begin{subtable}[t]{0.48\textwidth}
\centering
\caption{$\CT_h^3$ (Non-convex meshes)}
\resizebox{\linewidth}{!}{%
\begin{tabular}{@{}c ccc ccc@{}}
\toprule
$\omega_{hi}$ & $N=16$ & $N=32$ & $N=64$ & $\alpha$ & Extrap. & \cite{MR4570534} \\
\midrule
\multicolumn{7}{c}{$\nu=0.35$} \\
\midrule
$\omega_{h1}$ & 4.0978 & 4.1678 & 4.1866 & 1.90 & 4.1934 & 4.1931 \\
$\omega_{h2}$ & 4.0991 & 4.1679 & 4.1866 & 1.87 & 4.1937 & 4.1931 \\
$\omega_{h3}$ & 4.3113 & 4.3560 & 4.3680 & 1.89 & 4.3725 & 4.3722 \\
$\omega_{h4}$ & 5.7201 & 5.8741 & 5.9178 & 1.82 & 5.9350 & 5.9332 \\
\midrule
\multicolumn{7}{c}{$\nu=0.49$} \\
\midrule
$\omega_{h1}$ & 4.1240 & 4.1711 & 4.1841 & 1.86 & 4.1890 & 4.1886 \\
$\omega_{h2}$ & 5.3046 & 5.4612 & 5.5031 & 1.90 & 5.5185 & 5.5176 \\
$\omega_{h3}$ & 5.3412 & 5.4716 & 5.5059 & 1.93 & 5.5180 & 5.5176 \\
$\omega_{h4}$ & 6.2785 & 6.4715 & 6.5247 & 1.86 & 6.5449 & 6.5434 \\
\midrule
\multicolumn{7}{c}{$\nu=0.5$} \\
\midrule
$\omega_{h1}$ & 4.1121 & 4.1594 & 4.1725 & 1.85 & 4.1776 & 4.1771 \\
$\omega_{h2}$ & 5.3352 & 5.4866 & 5.5274 & 1.89 & 5.5425 & 5.5415 \\
$\omega_{h3}$ & 5.3725 & 5.4972 & 5.5302 & 1.92 & 5.5420 & 5.5415 \\
$\omega_{h4}$ & 6.2742 & 6.4657 & 6.5187 & 1.85 & 6.5391 & 6.5373 \\
\bottomrule
\end{tabular}%
}
\end{subtable}%
\hfill
\begin{subtable}[t]{0.48\textwidth}
\centering
\caption{$\CT_h^4$ (Deformed hexagonal meshes)}
\resizebox{\linewidth}{!}{%
\begin{tabular}{@{}c ccc ccc@{}}
\toprule
$\omega_{hi}$ & $N=9$ & $N=35$ & $N=61$ & $\alpha$ & Extrap. & \cite{MR4570534} \\
\midrule
\multicolumn{7}{c}{$\nu=0.35$} \\
\midrule
$\omega_{h1}$ & 3.9389 & 4.1764 & 4.1876 & 2.00 & 4.1932 & 4.1931 \\
$\omega_{h2}$ & 3.9560 & 4.1766 & 4.1877 & 1.96 & 4.1932 & 4.1931 \\
$\omega_{h3}$ & 4.1836 & 4.3601 & 4.3682 & 2.02 & 4.3722 & 4.3722 \\
$\omega_{h4}$ & 5.3750 & 5.8926 & 5.9197 & 1.92 & 5.9339 & 5.9332 \\
\midrule
\multicolumn{7}{c}{$\nu=0.49$} \\
\midrule
$\omega_{h1}$ & 3.9941 & 4.1755 & 4.1843 & 1.98 & 4.1887 & 4.1886 \\
$\omega_{h2}$ & 5.0303 & 5.4859 & 5.5073 & 2.01 & 5.5177 & 5.5176 \\
$\omega_{h3}$ & 5.0720 & 5.4870 & 5.5075 & 1.97 & 5.5177 & 5.5176 \\
$\omega_{h4}$ & 5.8253 & 6.4912 & 6.5262 & 1.92 & 6.5443 & 6.5434 \\
\midrule
\multicolumn{7}{c}{$\nu=0.5$} \\
\midrule
$\omega_{h1}$ & 3.9818 & 4.1639 & 4.1728 & 1.98 & 4.1772 & 4.1771 \\
$\omega_{h2}$ & 5.0622 & 5.5104 & 5.5314 & 2.01 & 5.5417 & 5.5415 \\
$\omega_{h3}$ & 5.1134 & 5.5116 & 5.5316 & 1.95 & 5.5419 & 5.5415 \\
$\omega_{h4}$ & 5.8206 & 6.4850 & 6.5201 & 1.91 & 6.5387 & 6.5373 \\
\bottomrule
\end{tabular}%
}
\end{subtable}

\vspace{1.2em}

% --- QUINTA TABLA ---
\begin{subtable}[t]{0.48\textwidth}
\centering
\caption{$\CT_h^5$ (Hexagonal meshes)}
\resizebox{\linewidth}{!}{%
\begin{tabular}{@{}c ccc ccc@{}}
\toprule
$\omega_{hi}$ & $N=9$ & $N=35$ & $N=61$ & $\alpha$ & Extrap. & \cite{MR4570534} \\
\midrule
\multicolumn{7}{c}{$\nu=0.35$} \\
\midrule
$\omega_{h1}$ & 3.9006 & 4.1724 & 4.1863 & 1.94 & 4.1934 & 4.1931 \\
$\omega_{h2}$ & 3.9257 & 4.1747 & 4.1871 & 1.97 & 4.1932 & 4.1931 \\
$\omega_{h3}$ & 4.1673 & 4.3583 & 4.3676 & 1.98 & 4.3722 & 4.3722 \\
$\omega_{h4}$ & 5.2953 & 5.8850 & 5.9172 & 1.88 & 5.9347 & 5.9332 \\
\midrule
\multicolumn{7}{c}{$\nu=0.49$} \\
\midrule
$\omega_{h1}$ & 3.9786 & 4.1736 & 4.1836 & 1.93 & 4.1888 & 4.1886 \\
$\omega_{h2}$ & 4.9504 & 5.4795 & 5.5051 & 1.98 & 5.5180 & 5.5176 \\
$\omega_{h3}$ & 5.0553 & 5.4860 & 5.5071 & 1.97 & 5.5178 & 5.5176 \\
$\omega_{h4}$ & 5.7675 & 6.4862 & 6.5244 & 1.90 & 6.5450 & 6.5434 \\
\midrule
\multicolumn{7}{c}{$\nu=0.5$} \\
\midrule
$\omega_{h1}$ & 3.9664 & 4.1620 & 4.1721 & 1.93 & 4.1773 & 4.1771 \\
$\omega_{h2}$ & 4.9898 & 5.5043 & 5.5293 & 1.98 & 5.5418 & 5.5415 \\
$\omega_{h3}$ & 5.1024 & 5.5108 & 5.5313 & 1.95 & 5.5418 & 5.5415 \\
$\omega_{h4}$ & 5.7649 & 6.4800 & 6.5183 & 1.90 & 6.5387 & 6.5373 \\
\bottomrule
\end{tabular}%
}
\end{subtable}

\label{TABLA:1}
\end{table}
From Table \ref{TABLA:1} we observe that the double order of convergence predicted by Theorem \ref{thm:double:order} is attained in each case and is independent of the mesh under consideration, proving that the geometry does not affect the 
convergence rate. Additionally, the method is capable of capturing the spectrum of the problem for values of the Poisson ratio that includes the limit case $\nu=0.5$, proving that the mixed VEM scheme is locking free. Finally, we observe that the extrapolated values that we obtain with our VEM method are close to those of \cite{MR4570534}. In Figure \ref{fig:eigenfunction2} we display plots of some approximated eigenfunctions, obtained with different polygonal meshes.
%\vspace{-0.8cm}
\begin{figure}[H]
	\begin{center}
		\begin{minipage}{13cm}
			\centering\includegraphics[height=4.9cm, width=4.9cm]{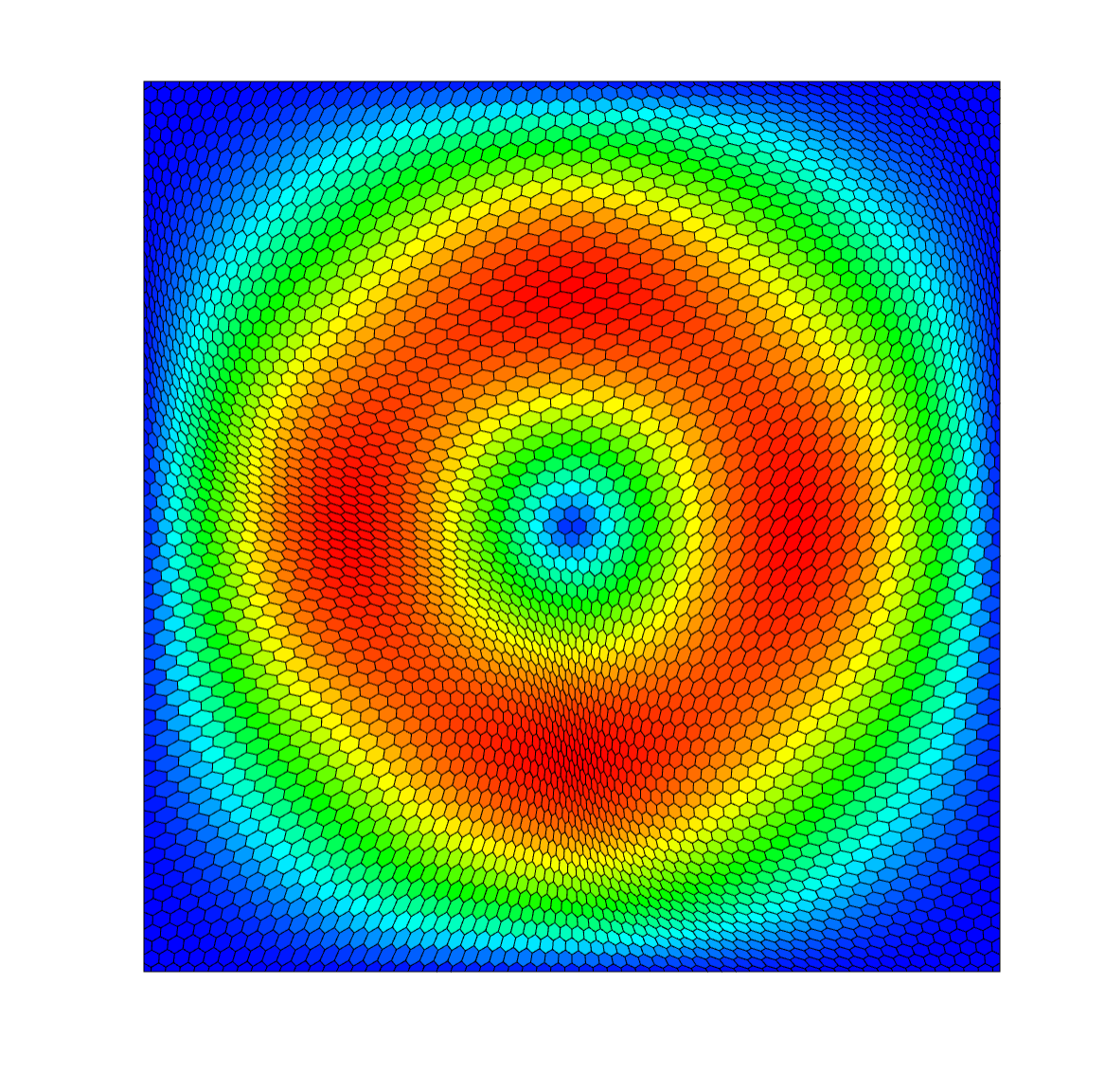}
			\centering\includegraphics[height=4.9cm, width=4.9cm]{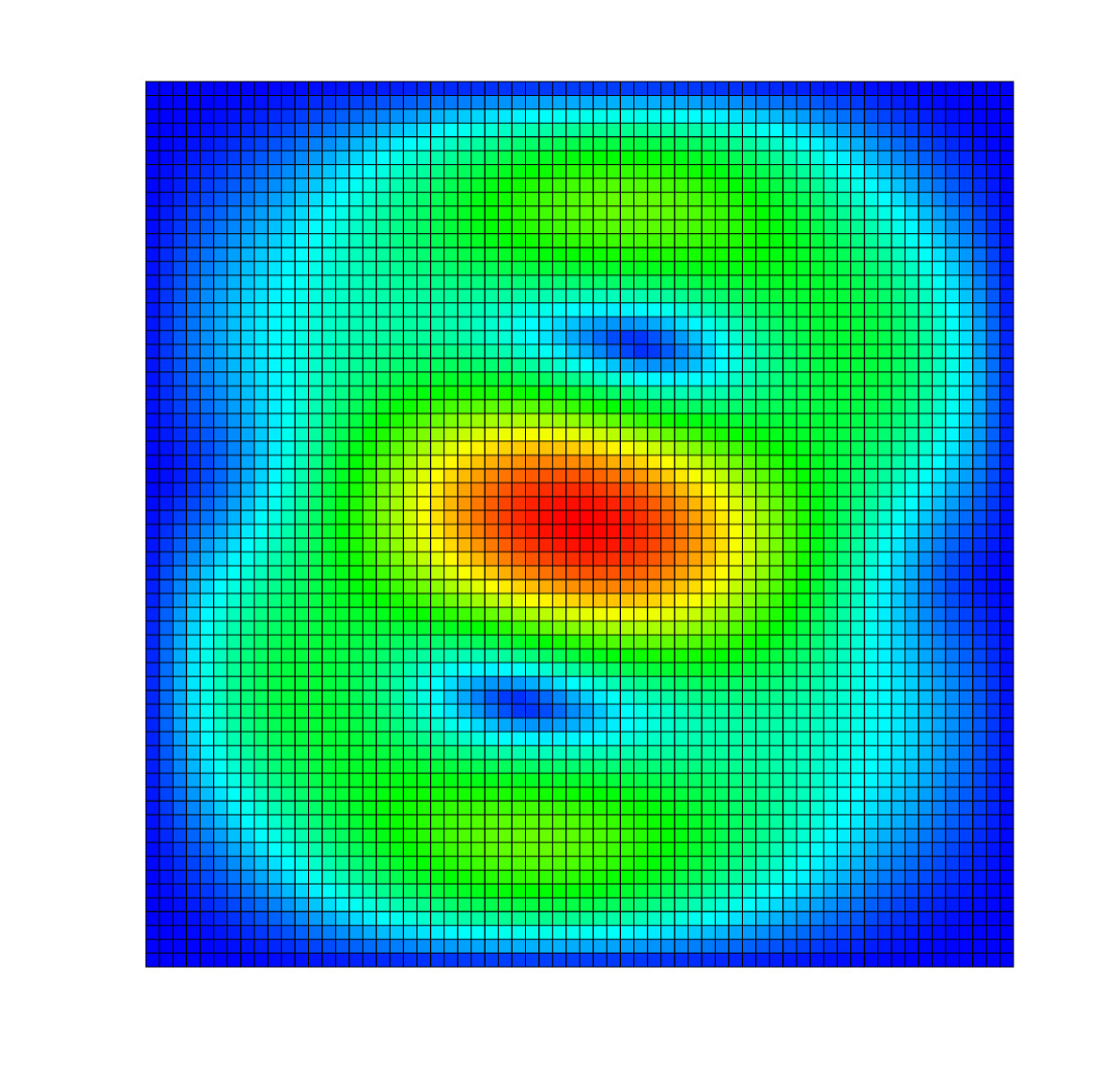}
			\centering\includegraphics[height=4.9cm, width=4.9cm]{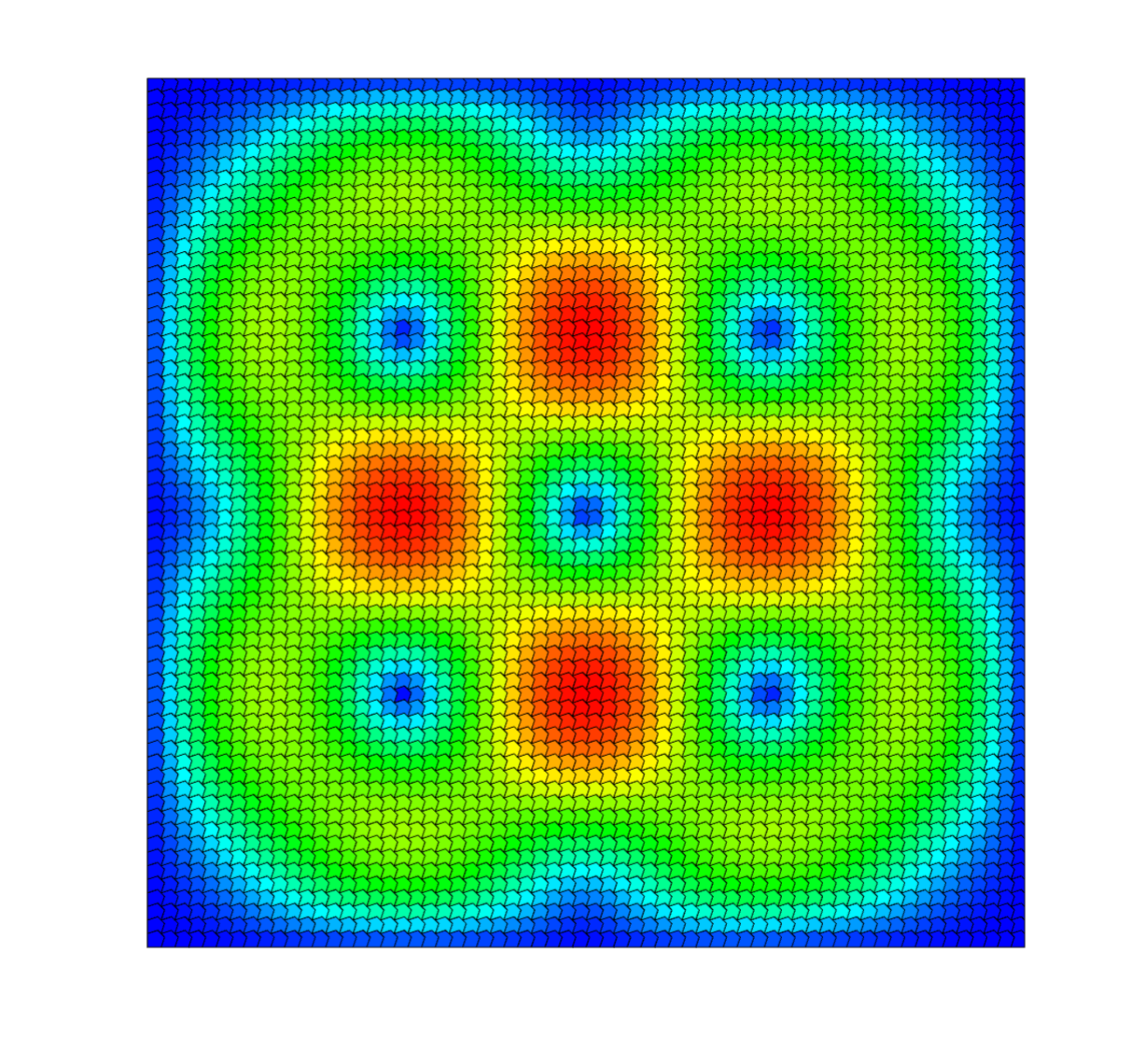}
			\centering\includegraphics[height=4.9cm, width=4.9cm]{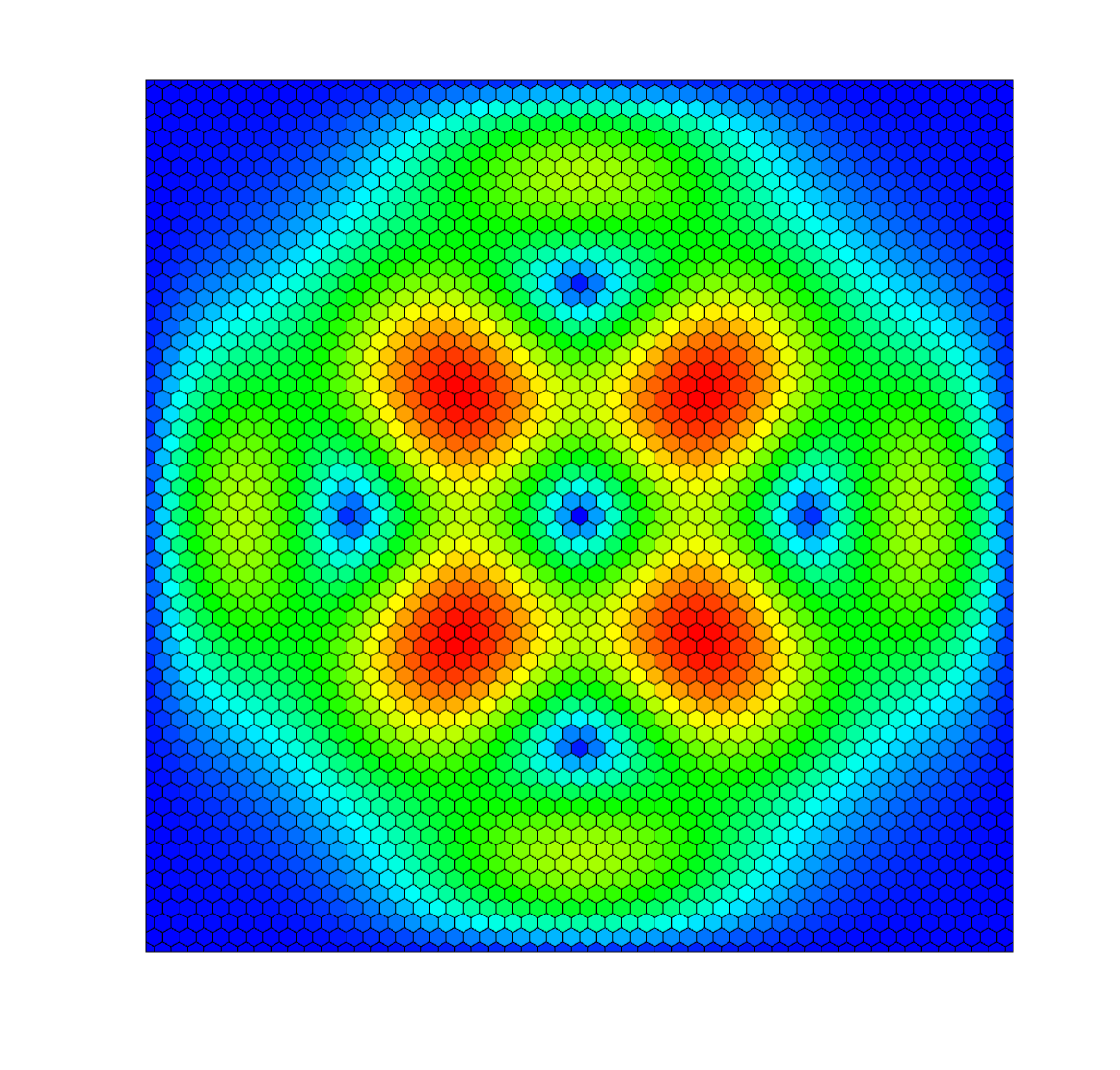}
         \end{minipage}
     				\caption{Test 1. Plots of the computed  magnitude of the first  eigenfunction for $\CT_{h}^{4}$(top left), magnitude of the second eigenfunction (top right) for $\CT_{h}^{2}$, magnitude of the fourth  eigenfunction for $\CT_{h}^{3}$(bottom left) and  magnitude of the fifth  eigenfunction for $\CT_{h}^{5}$(bottom right), with $\nu=0.5$.}
		\label{fig:eigenfunction2}
	\end{center}
\end{figure}
\subsection{Test 2: L-shaped domain}
In this test we consider a non-convex domain that  that we will call the L-shaped domain which is defined by $\O:=(-1,$$1)^2\setminus[-1$,$0]^2$. The importance of this geometry is that the geometrical singularity leads to non-sufficient smooth eigenfunctions for the elasticity eigenvalue problem. This is reflected on the numerical results when the order of convergence is computed. In the forthcoming tables we report the the first five computed frequencies by our method for different meshes and different values of the Poisson ratio,  the corresponding convergence rates, the extrapolated values, and the reference values that we use from reference  \cite{MR4570534}.
\begin{table}[H]
\centering
\caption{Test 2. Computed lowest eigenvalues $\omega_{hi}$, $1\leq i\leq 5$, on distorted square meshes.}
\label{TABLA:2}
% --- FILA 1: Mallas T_h^2 y T_h^4 ---
\begin{subtable}[t]{0.48\textwidth}
\centering
\caption[]{$\CT_h^2$( Square meshes)}
\resizebox{\linewidth}{!}{%
\begin{tabular}{@{}c cccc ccc@{}}
\toprule
$\omega_{hi}$ & $N=8$ & $N=16$ & $N=32$ & $N=64$ & $\alpha$ & Extrap. & \cite{MR4570534} \\
\midrule
\multicolumn{8}{c}{$\nu=0.35$} \\
\midrule
$\omega_{h1}$ & 2.2957 & 2.3530 & 2.3700 & 2.3756 & 1.72 & 2.3778 & 2.3783 \\
$\omega_{h2}$ & 2.6605 & 2.7587 & 2.7866 & 2.7946 & 1.81 & 2.7978 & 2.7976 \\
$\omega_{h3}$ & 3.0150 & 3.1920 & 3.2473 & 3.2680 & 1.62 & 3.2763 & 3.2777 \\
$\omega_{h4}$ & 3.3333 & 3.5383 & 3.5961 & 3.6147 & 1.79 & 3.6211 & 3.6215 \\
$\omega_{h5}$ & 3.4821 & 3.7011 & 3.7622 & 3.7803 & 1.83 & 3.7867 & 3.7866 \\
\midrule
\multicolumn{8}{c}{$\nu=0.49$} \\
\midrule
$\omega_{h1}$ & 2.9813 & 3.1669 & 3.2281 & 3.2530 & 1.53 & 3.2637 & 3.2661 \\
$\omega_{h2}$ & 3.1893 & 3.4154 & 3.4787 & 3.5005 & 1.78 & 3.5073 & 3.5085 \\
$\omega_{h3}$ & 3.4038 & 3.6302 & 3.6925 & 3.7108 & 1.84 & 3.7174 & 3.7172 \\
$\omega_{h4}$ & 3.6783 & 3.9394 & 4.0150 & 4.0356 & 1.80 & 4.0447 & 4.0427 \\
$\omega_{h5}$ & 3.7158 & 4.0670 & 4.1688 & 4.2007 & 1.77 & 4.2124 & 4.2125 \\
\midrule
\multicolumn{8}{c}{$\nu=0.50$} \\
\midrule
$\omega_{h1}$ & 2.9803 & 3.1682 & 3.2306 & 3.2563 & 1.52 & 3.2674 & 3.2699 \\
$\omega_{h2}$ & 3.1948 & 3.4200 & 3.4830 & 3.5047 & 1.78 & 3.5114 & 3.5126 \\
$\omega_{h3}$ & 3.4362 & 3.6541 & 3.7145 & 3.7324 & 1.83 & 3.7389 & 3.7388 \\
$\omega_{h4}$ & 3.6773 & 3.9374 & 4.0130 & 4.0336 & 1.80 & 4.0426 & 4.0408 \\
$\omega_{h5}$ & 3.8614 & 4.1650 & 4.2594 & 4.2863 & 1.71 & 4.2994 & 4.2968 \\
\bottomrule
\end{tabular}%
}
\end{subtable}%
\hfill
\begin{subtable}[t]{0.48\textwidth}
\centering
\caption{$\CT_h^4$ (Deformed hexagonal meshes)}
\resizebox{\linewidth}{!}{%
\begin{tabular}{@{}c cccc ccc@{}}
\toprule
$\omega_{hi}$ & $N=9$ & $N=19$ & $N=35$ & $N=45$ & $\alpha$ & Extrap. & \cite{MR4570534} \\
\midrule
\multicolumn{8}{c}{$\nu=0.35$} \\
\midrule
$\omega_{h1}$ & 2.3356 & 2.3668 & 2.3746 & 2.3759 & 1.74 & 2.3785 & 2.3783 \\
$\omega_{h2}$ & 2.7259 & 2.7798 & 2.7924 & 2.7944 & 1.83 & 2.7983 & 2.7976 \\
$\omega_{h3}$ & 3.1510 & 3.2407 & 3.2657 & 3.2698 & 1.60 & 3.2799 & 3.2777 \\
$\omega_{h4}$ & 3.4892 & 3.5882 & 3.6121 & 3.6156 & 1.80 & 3.6233 & 3.6215 \\
$\omega_{h5}$ & 3.6374 & 3.7510 & 3.7768 & 3.7805 & 1.88 & 3.7881 & 3.7866 \\
\midrule
\multicolumn{8}{c}{$\nu=0.49$} \\
\midrule
$\omega_{h1}$ & 3.1261 & 3.2217 & 3.2504 & 3.2555 & 1.48 & 3.2691 & 3.2661 \\
$\omega_{h2}$ & 3.3699 & 3.4718 & 3.4980 & 3.5017 & 1.72 & 3.5110 & 3.5085 \\
$\omega_{h3}$ & 3.5673 & 3.6814 & 3.7072 & 3.7110 & 1.88 & 3.7186 & 3.7172 \\
$\omega_{h4}$ & 3.8569 & 3.9984 & 4.0301 & 4.0350 & 1.89 & 4.0441 & 4.0427 \\
$\omega_{h5}$ & 3.9711 & 4.1497 & 4.1942 & 4.2010 & 1.75 & 4.2162 & 4.2125 \\
\midrule
\multicolumn{8}{c}{$\nu=0.50$} \\
\midrule
$\omega_{h1}$ & 3.1264 & 3.2240 & 3.2535 & 3.2588 & 1.47 & 3.2729 & 3.2699 \\
$\omega_{h2}$ & 3.3752 & 3.4763 & 3.5022 & 3.5060 & 1.72 & 3.5152 & 3.5126 \\
$\omega_{h3}$ & 3.5981 & 3.7049 & 3.7293 & 3.7329 & 1.87 & 3.7401 & 3.7388 \\
$\omega_{h4}$ & 3.8551 & 3.9964 & 4.0281 & 4.0330 & 1.89 & 4.0421 & 4.0408 \\
$\omega_{h5}$ & 4.0646 & 4.2380 & 4.2790 & 4.2857 & 1.81 & 4.2987 & 4.2968 \\
\bottomrule
\end{tabular}%
}
\end{subtable}

\vspace{1.2em} % Espacio entre filas de subtables

% --- FILA 2: Malla T_h^5 ---
\begin{subtable}[t]{0.48\textwidth}
\centering
\caption{$\CT_h^5$ (Hexagonal meshes)}
\resizebox{\linewidth}{!}{%
\begin{tabular}{@{}c cccc ccc@{}}
\toprule
$\omega_{hi}$ & $N=9$ & $N=19$ & $N=35$ & $N=45$ & $\alpha$ & Extrap. & \cite{MR4570534} \\
\midrule
\multicolumn{8}{c}{$\nu=0.35$} \\
\midrule
$\omega_{h1}$ & 2.3341 & 2.3661 & 2.3743 & 2.3757 & 1.70 & 2.3786 & 2.3783 \\
$\omega_{h2}$ & 2.7199 & 2.7777 & 2.7918 & 2.7940 & 1.77 & 2.7987 & 2.7976 \\
$\omega_{h3}$ & 3.1351 & 3.2349 & 3.2638 & 3.2686 & 1.54 & 3.2813 & 3.2777 \\
$\omega_{h4}$ & 3.4694 & 3.5817 & 3.6104 & 3.6144 & 1.73 & 3.6245 & 3.6215 \\
$\omega_{h5}$ & 3.6209 & 3.7456 & 3.7753 & 3.7795 & 1.82 & 3.7889 & 3.7866 \\
\midrule
\multicolumn{8}{c}{$\nu=0.49$} \\
\midrule
$\omega_{h1}$ & 3.1071 & 3.2145 & 3.2481 & 3.2539 & 1.44 & 3.2705 & 3.2661 \\
$\omega_{h2}$ & 3.3423 & 3.4627 & 3.4954 & 3.5001 & 1.65 & 3.5127 & 3.5085 \\
$\omega_{h3}$ & 3.5510 & 3.6756 & 3.7057 & 3.7100 & 1.80 & 3.7197 & 3.7172 \\
$\omega_{h4}$ & 3.8433 & 3.9925 & 4.0282 & 4.0337 & 1.80 & 4.0453 & 4.0427 \\
$\omega_{h5}$ & 3.9457 & 4.1421 & 4.1918 & 4.1994 & 1.73 & 4.2168 & 4.2125 \\
\midrule
\multicolumn{8}{c}{$\nu=0.50$} \\
\midrule
$\omega_{h1}$ & 3.1062 & 3.2163 & 3.2510 & 3.2571 & 1.42 & 3.2748 & 3.2699 \\
$\omega_{h2}$ & 3.3480 & 3.4673 & 3.4997 & 3.5043 & 1.65 & 3.5168 & 3.5126 \\
$\omega_{h3}$ & 3.5807 & 3.6988 & 3.7277 & 3.7318 & 1.78 & 3.7415 & 3.7388 \\
$\omega_{h4}$ & 3.8419 & 3.9905 & 4.0262 & 4.0318 & 1.80 & 4.0432 & 4.0408 \\
$\omega_{h5}$ & 4.0510 & 4.2314 & 4.2766 & 4.2841 & 1.73 & 4.2999 & 4.2968 \\
\bottomrule
\end{tabular}%
}
\end{subtable}

\end{table}

We observe from Table \ref{TABLA:2} that the numerical results agree with both the theoretical analysis and the findings reported in the literature. On the one hand, the presence of the singularity clearly influences the regularity of certain eigenfunctions, in particular the first one. As a consequence, the convergence rate of the corresponding frequency is not optimal, while the remaining frequencies exhibit convergence close to the expected order $\mathcal{O}(h^2)$. Furthermore, the results once again indicate that the method is locking-free, since the limiting case $\nu=0.5$ remains stable. In addition, the type of mesh employed does not affect the convergence order of the computed frequencies, confirming the robustness of the method with respect to the geometry of the mesh elements. 

Finally, we conclude this test by presenting plots of the displacement magnitudes corresponding to the first four eigenfunctions, computed using different polygonal meshes.
\begin{figure}[H]
	\begin{center}
		\begin{minipage}{13cm}
			\centering\includegraphics[height=5.0cm, width=5.0cm]{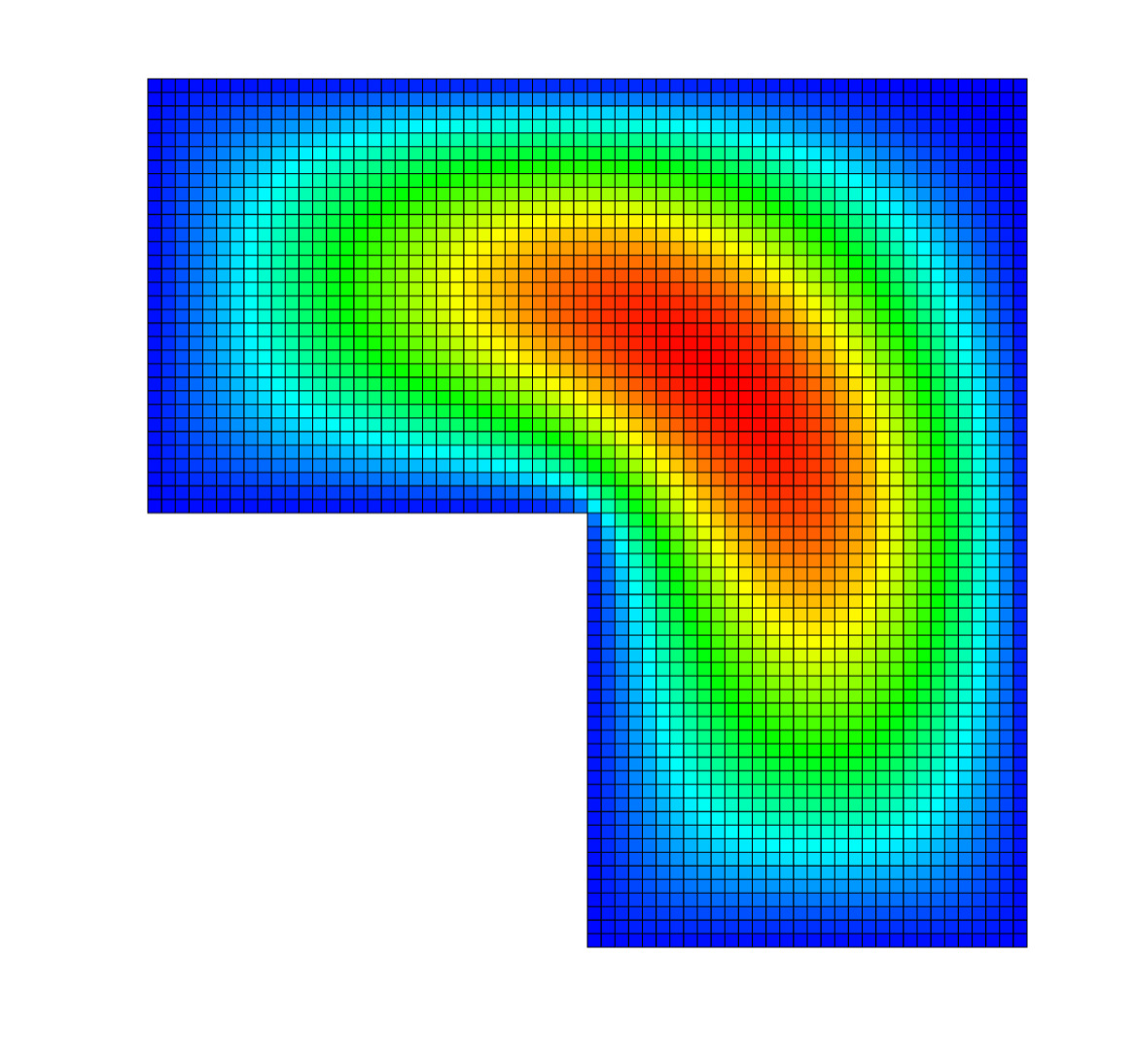}
			\centering\includegraphics[height=5.0cm, width=5.0cm]{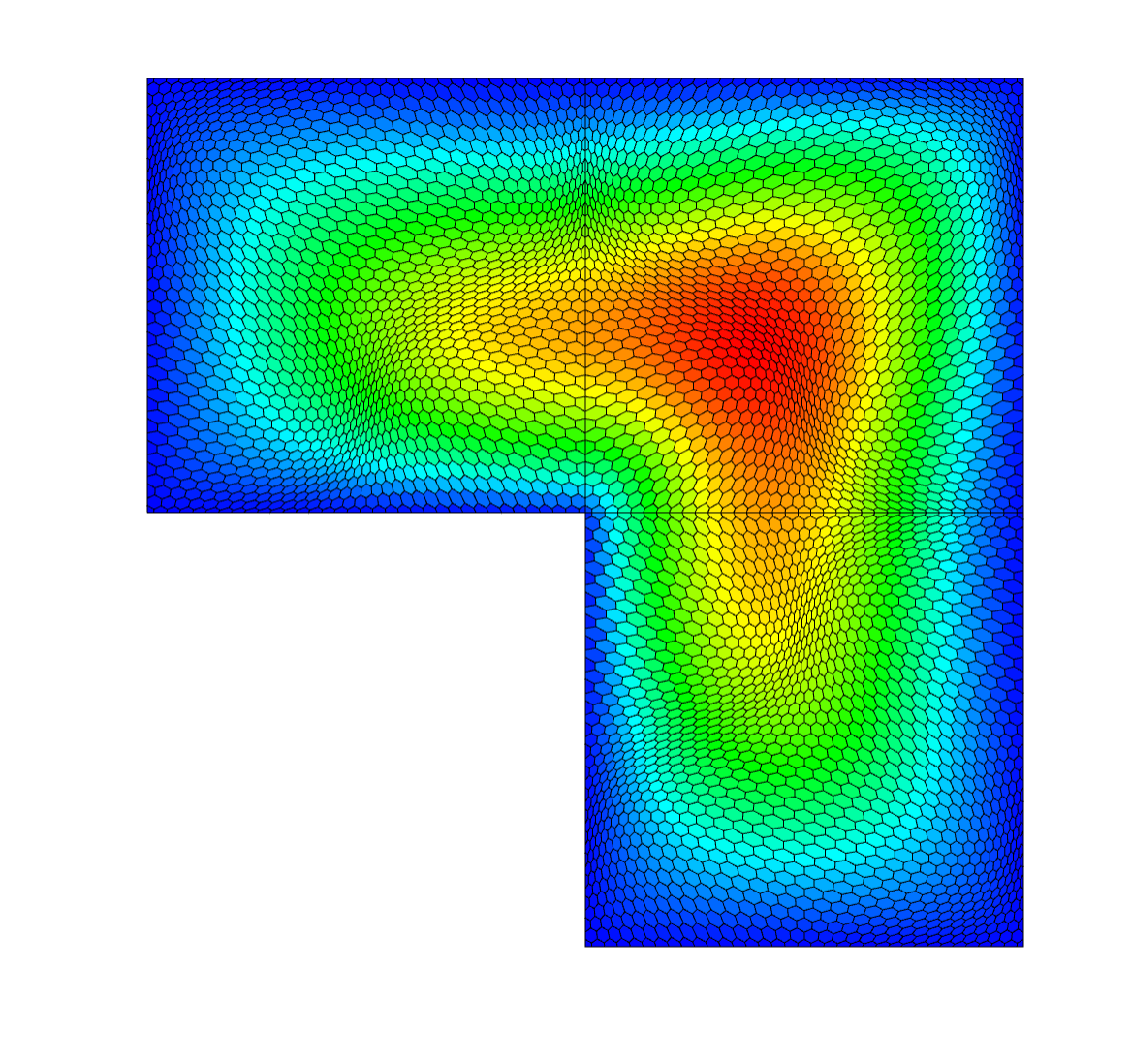}\\
			\centering\includegraphics[height=5cm, width=5cm]{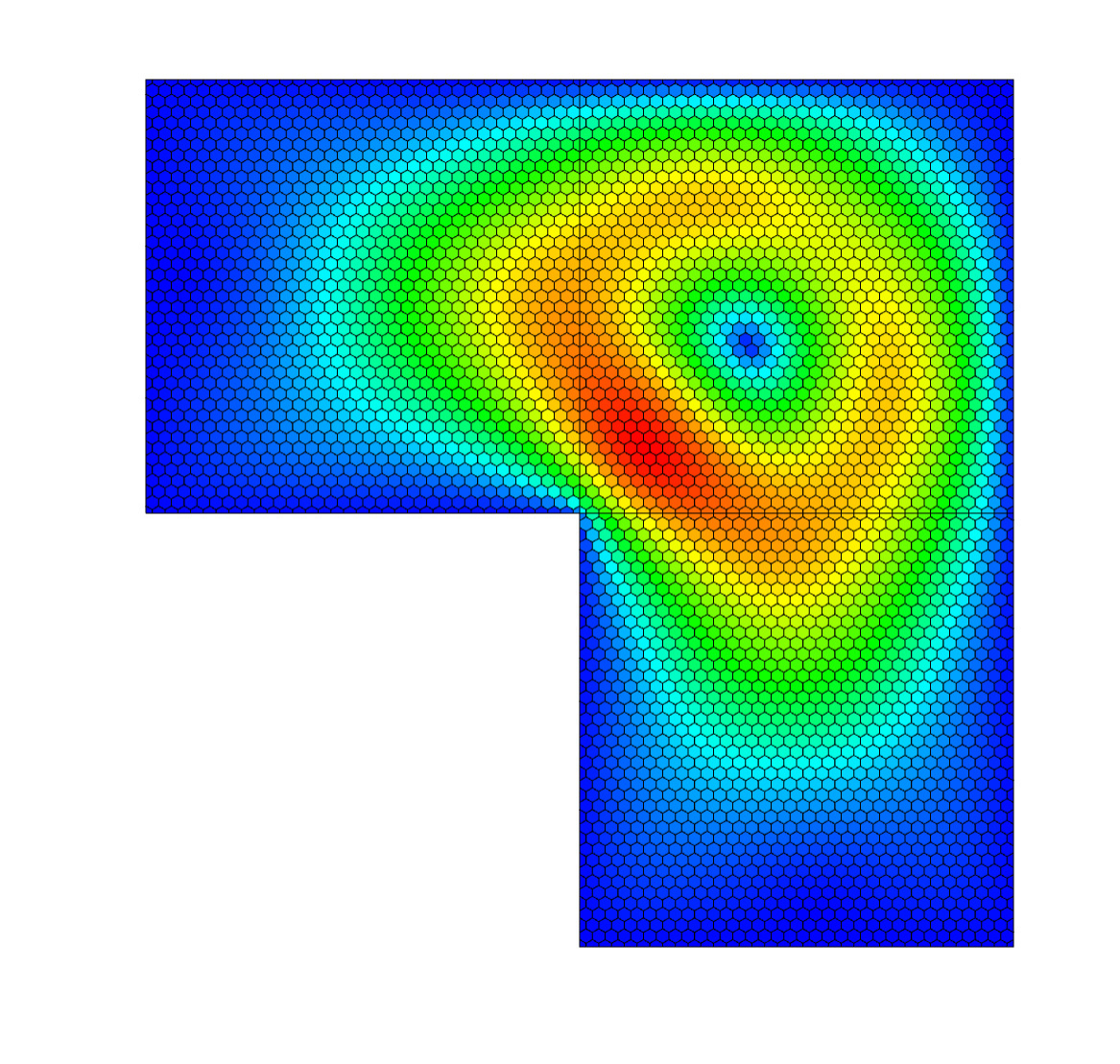}
			\centering\includegraphics[height=5cm, width=5cm]{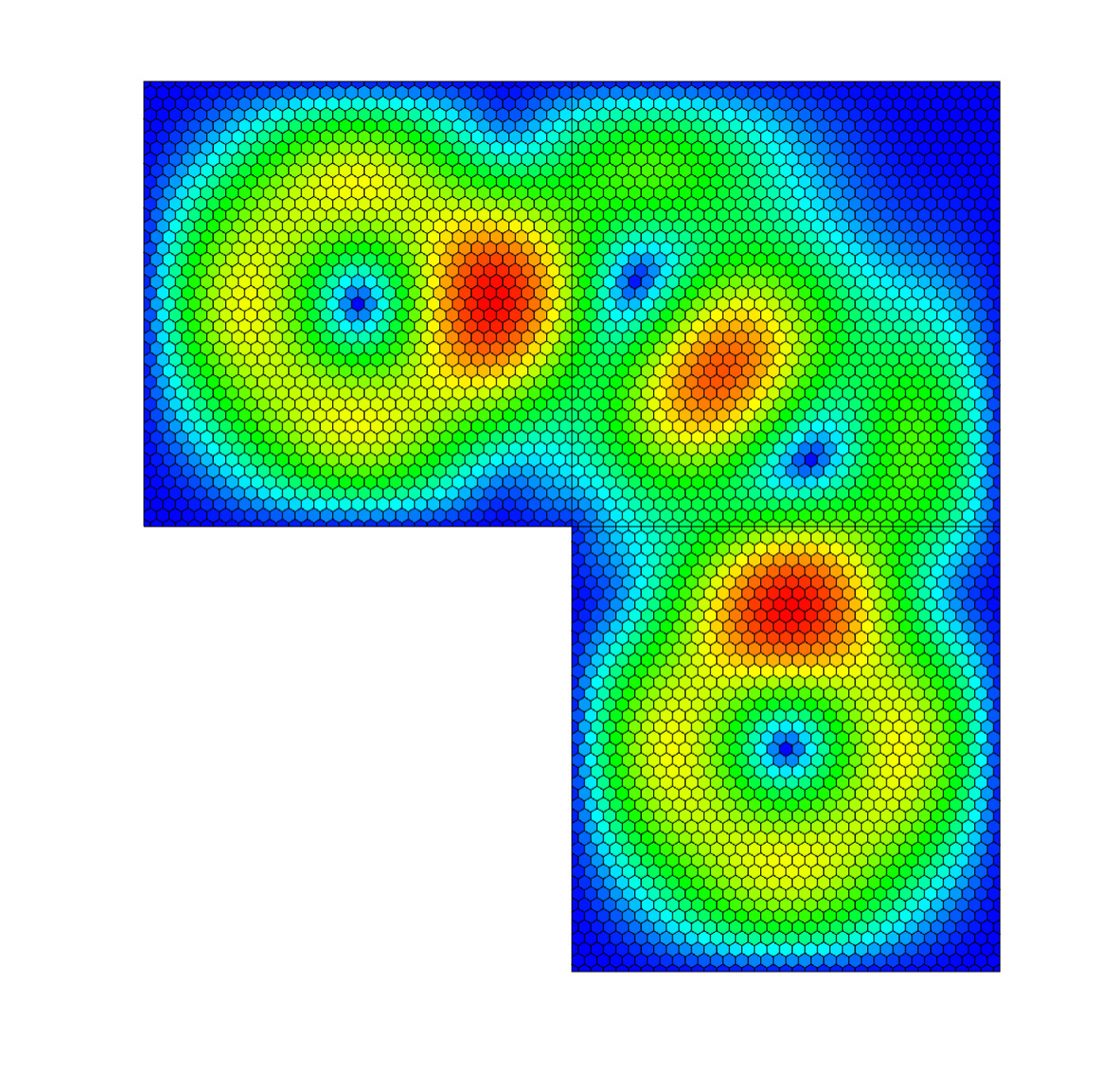}
         \end{minipage}
     				\caption{Test 2. Plots of the computed  first  eigenfunction for $\CT_{h}^{2}$(left), second eigenfunction (right) for $\CT_{h}^{4}$, third  eigenfunction for $\CT_{h}^{5}$(bottom left) and fourth eigenfunction for  $\CT_{h}^{5}$(bottom right). All these plots were obtained for $\nu=0.49$.}
		\label{fig:eigenfunctionL}
	\end{center}
\end{figure}

\subsection{Test 3: Circular domain}
In the next experiment we consider a situation that is not covered by the theoretical analysis, since the domain has a curved boundary. Such a geometry is not ideal for the virtual element method, because the boundary must be approximated by polygonal segments, which introduces a geometric inconsistency in the variational formulation. Nevertheless, the goal of this test is to illustrate that the proposed scheme is still able to reproduce the spectrum of the elasticity eigenvalue problem even under this geometric approximation.

The computational region is the unit disk,
$\Omega:=\{(x,y)\in\mathbb{R}^2:\; x^2+y^2<1\}$.
For this experiment we prescribe the homogeneous Dirichlet condition $\bu=\boldsymbol{0}$ on $\partial\Omega$.

Because the domain is convex, the eigenfunctions of the problem are expected to be sufficiently regular. As a consequence, the method should display a convergence rate close to quadratic order.
\begin{table}[H]
  \centering
  \caption{Test 3. Computed lowest eigenvalues $\omega_{hi}$, $1\leq i\leq 5$, on Voronoi meshes ($\mathcal{T}_h^6$).}
  \label{TABLA:9}
  \begin{tabular}{l *{7}{r}}
    \toprule
    $\omega_{hi}$ & $N=115$ & $N=243$ & $N=357$ & $N=457$ & $\alpha$ & Extrap. & Ref.~\cite{MR4570534} \\
    \midrule
    \multicolumn{8}{c}{ $\nu=0.35$} \\  \midrule
    \addlinespace[2pt]
    $\omega_{h1}$ & 2.3220 & 2.3301 & 2.3313 & 2.3316 & 2.10 & 2.3322 & 2.33234 \\
    $\omega_{h2}$ & 2.3222 & 2.3301 & 2.3313 & 2.3317 & 2.02 & 2.3323 & 2.33190 \\
    $\omega_{h3}$ & 2.3275 & 2.3309 & 2.3315 & 2.3317 & 1.87 & 2.3320 & 2.33234 \\
    $\omega_{h4}$ & 3.3033 & 3.3144 & 3.3162 & 3.3167 & 2.00 & 3.3176 & 3.31761 \\
    $\omega_{h5}$ & 3.3036 & 3.3145 & 3.3162 & 3.3167 & 1.99 & 3.3176 & 3.31761 \\
    \midrule
    \multicolumn{8}{c}{ $\nu=0.49$} \\  \midrule
    \addlinespace[2pt]
    $\omega_{h1}$ & 2.2154 & 2.2187 & 2.2192 & 2.2194 & 2.01 & 2.2197 & 2.21964 \\
    $\omega_{h2}$ & 2.9445 & 2.9550 & 2.9567 & 2.9572 & 1.98 & 2.9581 & 2.95809 \\
    $\omega_{h3}$ & 2.9447 & 2.9550 & 2.9567 & 2.9572 & 1.96 & 2.9581 & 2.95809 \\
    $\omega_{h4}$ & 3.6552 & 3.6767 & 3.6801 & 3.6812 & 2.00 & 3.6829 & 3.68292 \\
    $\omega_{h5}$ & 3.6563 & 3.6767 & 3.6801 & 3.6812 & 1.93 & 3.6831 & 3.68292 \\
    \midrule
    \multicolumn{8}{c}{ $\nu=0.50$} \\
      \midrule
    \addlinespace[2pt]
    $\omega_{h1}$ & 2.2080 & 2.2113 & 2.2118 & 2.2120 & 2.01 & 2.2122 & 2.21223 \\
    $\omega_{h2}$ & 2.9519 & 2.9621 & 2.9637 & 2.9642 & 1.98 & 2.9651 & 2.96505 \\
    $\omega_{h3}$ & 2.9521 & 2.9621 & 2.9637 & 2.9642 & 1.96 & 2.9651 & 2.96505 \\
    $\omega_{h4}$ & 3.6563 & 3.6775 & 3.6808 & 3.6819 & 1.99 & 3.6836 & 3.68358 \\
    $\omega_{h5}$ & 3.6573 & 3.6775 & 3.6808 & 3.6819 & 1.92 & 3.6838 & 3.68358 \\
    \bottomrule
  \end{tabular}
\end{table}

The results reported on Table \ref{TABLA:9} show that the VEM scheme is capable to capture the physical spectrum on a circular domain,  for mesh $\CT_h^6$ and for any value of the Poisson ratio. The results for other type of meshes are similar.   It is clear that  the orders of convergence convergence are the optimal for the selected values of $\nu$, even for the limit case. We present in Figure \ref{fig:eigenfunctioncirculo} plots of the magnitude of the displacement of the first and fifth eigenfunctions.
\begin{figure}[H]
	\begin{center}
		\begin{minipage}{13cm}
			\centering\includegraphics[height=5.0cm, width=5.0cm]{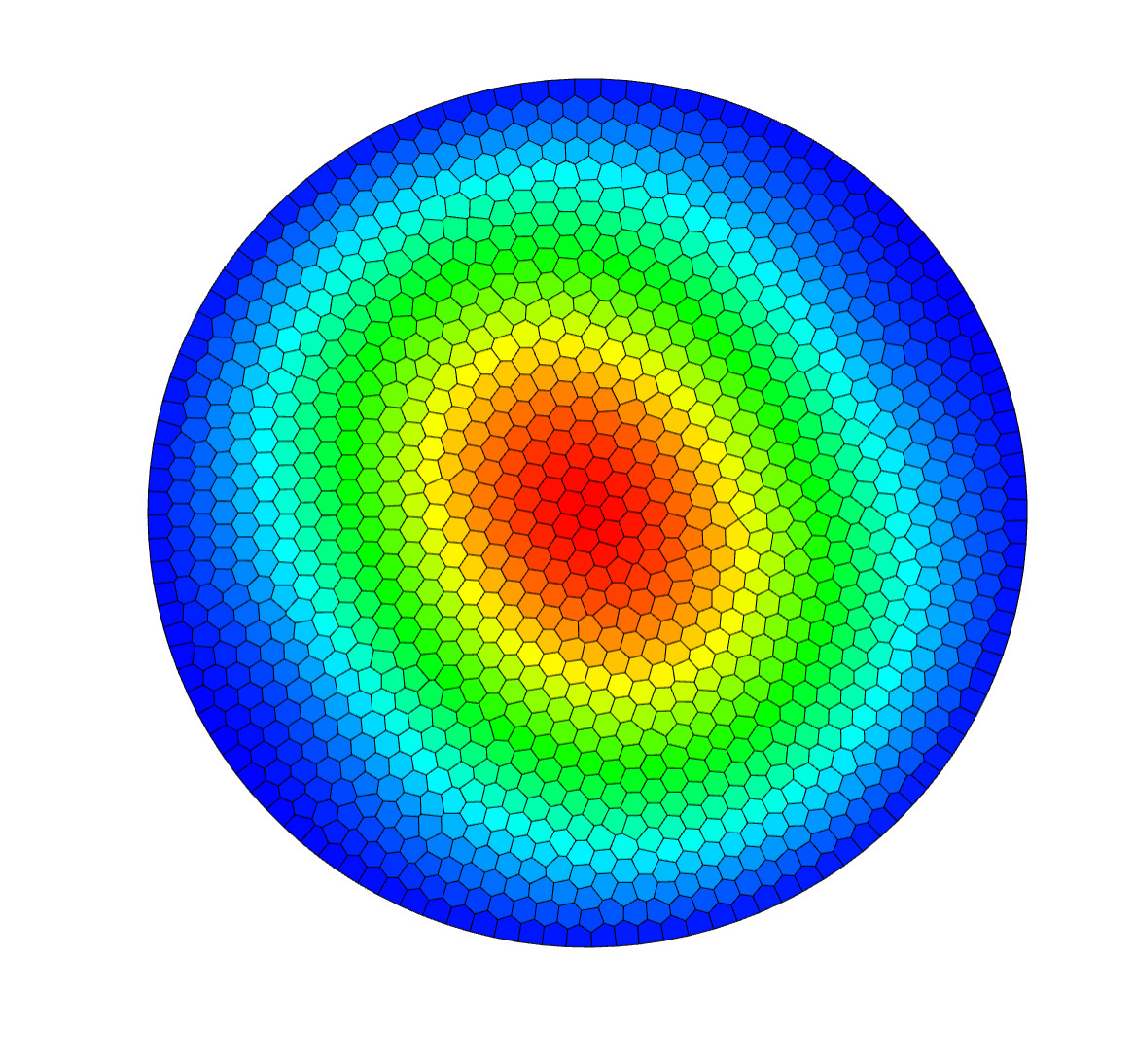}\hspace{1.4cm}
			\centering\includegraphics[height=5.0cm, width=5.0cm]{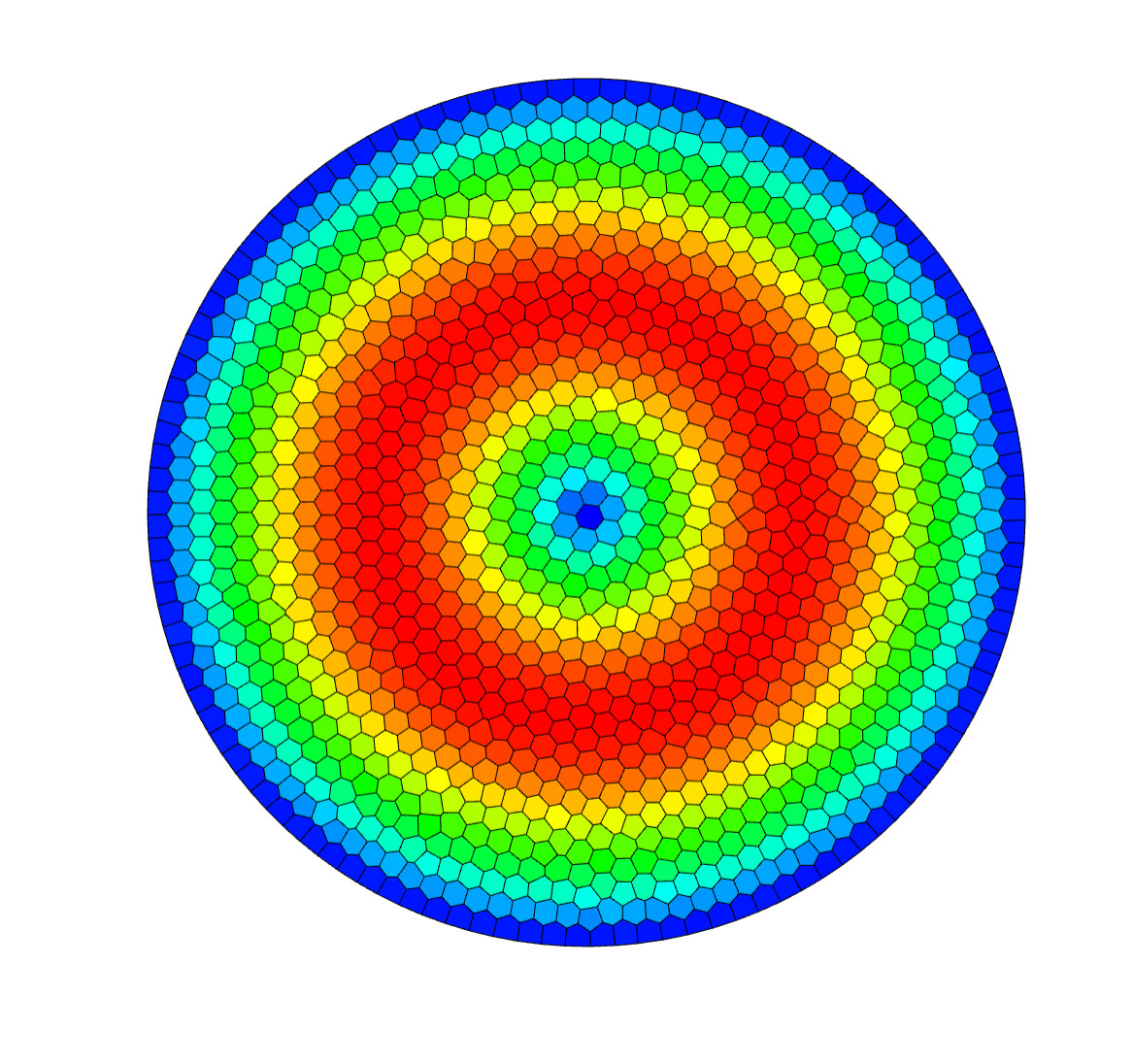}\\
		%	\centering\includegraphics[height=5cm, width=5cm]{Figuras/wh3L.pdf}
		%	centering\includegraphics[height=5cm, width=5cm]{Figuras/wh4L.pdf}
         \end{minipage}
     				\caption{Test 3. Plots of the computed  first   and the second eigenfunction for $\CT_{h}^{6}$ and $\nu=0.35$,}
		\label{fig:eigenfunctioncirculo}
	\end{center}
\end{figure}

\begin{remark}
Since we are considering the lowest order approximation for the method ($k=0$), it is not possible to observe the variational crime introduced by approximating curved boundaries with straight segments. If we  implement a polynomial degree of approximation $k\geq 1$, this phenomenon would become evident as shown in \cite{MR4570534} for the same domain.
\end{remark}
\subsection{Test 4: Effects of the stability constant $\gamma_E$}
%As a final test of this numerical section, we investigate the robustness of our method with respect to the stabilization parameter. \FL{This is important to analyze when the spectrum of operators are approximated.}
As a final test in this numerical section, we investigate the robustness of our method with respect to the stabilization parameter, in order to assess whether the quality of the computations and the accurate approximation of the operator spectrum may be affected by this constant. This type of behavior has already been observed in VEM solutions of different eigenvalue problems.
In particular, it was shown in \cite{zbMATH06443639} that certain VEM discretizations of the Steklov eigenvalue problem introduce spurious eigenvalues, which can be suitably separated from the physical spectrum by an appropriate choice of the stability constant $\gamma_E$.
Similar analyses, also carried out for instance in \cite{MR4658607, MR4177014, MR4229296, MR4050542} within the VEM framework, show analogous results.
In the present case, no spurious eigenvalues were detected for any choice of this constant.
This is due to the boundary condition $\bu=\boldsymbol{0}$ imposed on the whole boundary $\partial\O$.
However, for large values of $\gamma_E$, the eigenvalues computed on coarse meshes may exhibit very poor accuracy.

The scaled version of the stabilization term $S^{E}(\cdot,\cdot)$ is multiplied by a parameter
$\gamma_E$, which is selected as follows: $\gamma_E=\{0,2^{-6},2^{-3},1,2^{3},2^{6}\}$.
For this test, we have selected the mesh $\CT_h^2$, and the idea is to vary its refinement level in order to compute the eigenvalues using the fitting \eqref{eq:fitting}. For this purpose, the refinement is performed with $N=\{20,60,100,140,180\}$.
Since $\O=(0,1)^2$ and considering the physical parameters, we set $E=1$ and the Poisson ratio $\nu=0.49$.
The results of this test are reported in Table \ref{TABLA:10}.
\begin{table}[H]
  \centering
  \caption{Test 4. Computed lowest eigenvalues $\omega_{hi}$, $1\leq i\leq 4$, on square meshes ($\mathcal{T}_h^2$) for $\gamma_E=0$ and $\gamma_E=2^i$ with $i \in \{-6,-3,0,3,6\}$.}
  \label{TABLA:10}
  \begin{tabular}{l *{7}{r}}
    \toprule
    $\omega_{hi}$ & $N=20$ & $N=60$ & $N=100$ & $N=140$ & $N=180$ & $\alpha$ & Ref.~\cite{MR4570534} \\
    \midrule
    \multicolumn{8}{c}{ $\gamma_E = 0$} \\ \midrule
    \addlinespace[2pt]
    $\omega_{h1}$ & 4.1227 & 4.1809 & 4.1858 & 4.1872 & 4.1877 & 2.00 & 4.1886 \\
    $\omega_{h2}$ & 5.3155 & 5.4938 & 5.5090 & 5.5132 & 5.5149 & 2.00 & 5.5176 \\
    $\omega_{h3}$ & 5.3155 & 5.4938 & 5.5090 & 5.5132 & 5.5149 & 2.00 & 5.5176 \\
    $\omega_{h4}$ & 6.2813 & 6.5123 & 6.5321 & 6.5376 & 6.5399 & 1.99 & 6.5434 \\
    \midrule
    \multicolumn{8}{c}{ $\gamma_E = 2^{-6}$} \\ \midrule
    \addlinespace[2pt]
    $\omega_{h1}$ & 4.1227 & 4.1809 & 4.1858 & 4.1872 & 4.1877 & 2.00 & 4.1886 \\
    $\omega_{h2}$ & 5.3155 & 5.4938 & 5.5090 & 5.5132 & 5.5149 & 2.00 & 5.5176 \\
    $\omega_{h3}$ & 5.3155 & 5.4938 & 5.5090 & 5.5132 & 5.5149 & 2.00 & 5.5176 \\
    $\omega_{h4}$ & 6.2813 & 6.5123 & 6.5321 & 6.5376 & 6.5399 & 1.99 & 6.5434 \\
    \midrule
    \multicolumn{8}{c}{ $\gamma_E = 2^{-3}$} \\ \midrule
    \addlinespace[2pt]
    $\omega_{h1}$ & 4.1227 & 4.1809 & 4.1858 & 4.1872 & 4.1877 & 2.00 & 4.1886 \\
    $\omega_{h2}$ & 5.3155 & 5.4938 & 5.5090 & 5.5132 & 5.5149 & 2.00 & 5.5176 \\
    $\omega_{h3}$ & 5.3155 & 5.4938 & 5.5090 & 5.5132 & 5.5149 & 2.00 & 5.5176 \\
    $\omega_{h4}$ & 6.2813 & 6.5123 & 6.5321 & 6.5376 & 6.5399 & 1.99 & 6.5434 \\
    \midrule
    \multicolumn{8}{c}{ $\gamma_E = 2^0$} \\ \midrule
    \addlinespace[2pt]
    $\omega_{h1}$ & 4.1227 & 4.1809 & 4.1858 & 4.1872 & 4.1877 & 2.00 & 4.1886 \\
    $\omega_{h2}$ & 5.3155 & 5.4938 & 5.5090 & 5.5132 & 5.5149 & 2.00 & 5.5176 \\
    $\omega_{h3}$ & 5.3155 & 5.4938 & 5.5090 & 5.5132 & 5.5149 & 2.00 & 5.5176 \\
    $\omega_{h4}$ & 6.2813 & 6.5123 & 6.5321 & 6.5376 & 6.5399 & 1.99 & 6.5434 \\
    \midrule
    \multicolumn{8}{c}{ $\gamma_E = 2^3$} \\ \midrule
    \addlinespace[2pt]
    $\omega_{h1}$ & 3.6366 & 4.1103 & 4.1596 & 4.1736 & 4.1795 & 2.00 & 4.1886 \\
    $\omega_{h2}$ & 4.1729 & 5.2869 & 5.4303 & 5.4724 & 5.4901 & 1.89 & 5.5176 \\
    $\omega_{h3}$ & 4.1729 & 5.2869 & 5.4303 & 5.4724 & 5.4901 & 1.89 & 5.5176 \\
    $\omega_{h4}$ & 4.7196 & 6.2472 & 6.4311 & 6.4852 & 6.5079 & 1.91 & 6.5434 \\
    \midrule
    \multicolumn{8}{c}{ $\gamma_E = 2^6$} \\ \midrule
    \addlinespace[2pt]
    $\omega_{h1}$ & 2.2721 & 3.6723 & 3.9722 & 4.0723 & 4.1165 & 2.00 & 4.1886 \\
    $\omega_{h2}$ & 2.3031 & 4.2499 & 4.9189 & 5.1816 & 5.3053 & 1.65 & 5.5176 \\
    $\omega_{h3}$ & 2.3031 & 4.2499 & 4.9189 & 5.1816 & 5.3053 & 1.65 & 5.5176 \\
    $\omega_{h4}$ & 2.3978 & 4.8192 & 5.7718 & 6.1121 & 6.2710 & 1.67 & 6.5434 \\
    \bottomrule
  \end{tabular}
\end{table}
We observe from Table \ref{TABLA:10} that for small values of $\gamma_E$, the method is capable to attain optimal order of convergence as the theory predicts. Even if $\gamma_E=1$ or $\gamma_E=8$, the spectrum is correctly approximated. We have contrasted our results with the extrapolated values obtained in \cite{MR4570534} for a FEM scheme. On the other hand, if the scaling parameter is larger, the order of convergence begins to deteriorate. Indeed, in the last row of Table \ref{TABLA:10}, the first eigenvalue es approximated with optimal order, however the rest of the spectrum that we report, is not approximated with the expected order. Moreover, the eigenvalues computed with highly refined meshes are not similar to the reference values in \cite{MR4570534}. These results are  consistent with those reported in \cite[Table 4]{MR3660301} where a similar phenomenon is observed. 
\section{Conclusions}
We have proposed, implemented, and analyzed a mixed virtual element method to approximate the physical spectrum
of the two-dimensional elasticity eigenvalue problem. The formulation leads to a locking-free method that allows the consideration of perfectly incompressible materials. Indeed, the mathematical analysis and the computational results are consistent with the available literature. We have demonstrated that, for ordinary materials (i.e., with positive Poisson ratio), the method is capable of accurately capturing the spectrum of the operator. Moreover, we have studied the effects of the stabilization, showing that for clamped structures this pollution effect can be easily avoided. All these results motivate the study of the three-dimensional case with more general boundary conditions, such as mixed conditions.

\section*{Declarations}
The authors have no competing interests to declare that are relevant to the content of this article.  Data sharing not applicable to this article as no datasets were generated or analysed during the current study.
\section*{Availability of Data and Materials} 
The datasets generated and/or analyzed during the current study are
available from the corresponding author on reasonable request.

\end{document}